\newcommand\cyr
\renewcommand\rmdefault{wncyr} \renewcommand\sfdefault{wncyss} \renewcommand\encodingdefault{OT2} \normalfont
\DeclareTextFontCommand{\textcyr}{\cyr}    
   \def\@settitle
\@title \end{center}
\numberwithin{equation}{section}
\newcommand\bs{\backslash}
\def\AA{{\mathbb A}}
\def\Ascr{{\mathscr A}}
\def\CC{{\mathbb C}}
\def\FF{{\mathbb F}} 
\def\HH{{\mathbb H}}
\def\QQ{{\mathbb Q}} 
\def\RR{{\mathbb R}} 
\def\VV{{\mathbb V}} 
\def\XX{{\mathbb X}} 
\def\ZZ{{\mathbb Z}}
\def\pfrak{{\mathfrak{p}}}
\def\ufrak{{\mathfrak{u}}}
\def\vfrak{{\mathfrak{v}}}
\def\gfrak{\mathfrak{g}}
\def\G{\Gamma}
\def\g{\gamma}
\def\bb{\mathrm{bb}} 
\def\GP{\mathrm{gp}} 
\def\cB{\cha^{\text{\hskip -3mm{\cyr B}}}}
\def\prim{\mathrm{pr}} 
\def\ssm{\smallsetminus}
\def\hor{{\mathit h}}
\def\ver{{\mathit \ell}}
\def\bs{\backslash}
\def\Acal{{\mathcal A}}
\def\Dcal{{\mathcal D}}
\def\Ecal{{\mathcal E}} 
\def\Fcal{{\mathcal F}} 
\def\Gcal{{\mathcal G}} 
\def\Iscr{{\mathscr I}}
\def\Ocal{{\mathcal O}}
\def\Pscr{{\mathscr P}}
\def\Scal{{\mathcal S}}
\def\Sscr{{\mathscr S}}
\def\la{\langle}
\def\ra{\rangle}
\def\End{\operatorname{End}}
\def\pt{{\scriptscriptstyle\bullet}}
\newcommand\aty{\operatorname{At}}
\newcommand\cha{\operatorname{ch}}
\newcommand\Ch{\operatorname{Ch}}
\newcommand\chern{\operatorname{c}}
\newcommand\Chern{\operatorname{C}}
\newcommand\first{\operatorname{in}}
\newcommand\Gr{\operatorname{Gr}}
\newcommand\Hom{\operatorname{Hom}}
\newcommand\sym{\operatorname{Sym}}
\newcommand\GL{\operatorname{GL}}
\newcommand\SL{\operatorname{SL}}
\newcommand\Un{\operatorname{U}}
\newcommand\Sp{\operatorname{Sp}}
\newcommand\gr{\operatorname{gr}}
\newcommand\Tr{\operatorname{Tr}}
\newtheorem{theorem}{Theorem}[section]
\newtheorem{lemma}[theorem]{Lemma}
\newtheorem{proposition}[theorem]{Proposition}
\newtheorem{corollary}[theorem]{Corollary}
\newtheorem{cordef}[theorem]{Corollary-definition}
\theoremstyle{definition}
\newtheorem{scholium}[theorem]{Scholium}
\newtheorem{definition}[theorem]{Definition}
\newtheorem{example}[theorem]{Example}
\theoremstyle{remark} 
\newtheorem{remark}[theorem]{Remark}
\newtheorem{cremarks}[theorem]{Concluding remarks}
\begin{document}

\author{Eduard Looijenga}
\address{Yau Mathematical Sciences Center, Tsinghua University Beijing (China) and Mathematisch Instituut, Universiteit Utrecht (Nederland)}
\email{eduard@math.tsinghua.edu.cn}

\keywords{Baily-Borel compactification, Goresky-Pardon Chern class, Tate extension}
\def\subjclassname{\textup{2010} Mathematics Subject Classification}
\subjclass[2010]{14G35, 14F43, 32S35}

\title[Chern class lifts  and Tate extensions]{Goresky-Pardon lifts of Chern classes and  associated Tate extensions}

\begin{abstract}
Let $X$ be an irreducible complex-analytic variety, $\Scal$ a stratification of $X$ and $\Fcal$ a holomorphic vector bundle on  the open stratum 
$\mathring{X}$. We give geometric  conditions on $\Scal$ and $\Fcal$ that produce a natural lift of the Chern class $\chern_k(\Fcal)\in H^{2k}(\mathring{X}; \CC)$ to $H^{2k}(X; \CC)$, which, in the algebraic setting, is of Hodge level $\ge k$. When applied to the Baily-Borel compactification $X$ of a locally symmetric variety $\mathring{X}$ and  an automorphic vector bundle $\Fcal$ on $\mathring{X}$, this refines a theorem of Goresky-Pardon. In passing we define a class of simplicial resolutions of the Baily-Borel compactification that can be used to define its mixed Hodge structure. We use this to show that the stable cohomology of the Satake (=Baily-Borel) compactification of $\Acal_g$ contains nontrivial Tate extensions.
\end{abstract}

\maketitle

\section{Introduction}
Let $X$ be an irreducible  complex-analytic variety,  $\mathring{X}$ a nonsingular Zariski open-dense subset of $X$ and $\Fcal$ a holomorphic vector bundle on $\mathring{X}$. In this paper we  give conditions under which the rational Chern classes $\chern_k(\Fcal)\in H^{2k}(X; \QQ)$ extend in a canonical manner as \emph{complex} classes to $X$, even (and especially) in situations where $\Fcal$ is known \emph{not} to extend to $X$ as a complex vector bundle. The passage to complex cohomology is not just an artefact of our method, for  we find examples for which the imaginary part of such a extension is nonzero. 
Before we say more about what is in this paper, we mention the situation that is both the origin and the motivation for addressing this question. This is when $\mathring{X}$ is a locally symmetric variety, $X=(\mathring{X})^\bb$ its Baily-Borel compactification, and $\Fcal$ an automorphic vector bundle on $\mathring{X}$.
Mumford  \cite{mumford:hirz}  defined in 1977 Chern numbers for an automorphic bundle $\Fcal$ as  integrals of Chern forms relative to some metric on $\mathring{X}$ (using his toroidal compactifications to prove their absolute convergence) and proved them to have properties that  Hirzebruch had earlier established  in case $\mathring{X}$ is compact. A quarter of a century later Goresky and Pardon \cite{gp} proved that the Chern classes of such an $\Fcal$ can be naturally extended to $X$ in such a manner that the associated Chern  numbers (i.e., polynomials in these classes evaluated on the fundamental class of $X$) 
yield those of Mumford. 
  
Returning to the content of this article, it has four principal results. The first one may be characterized as putting the result of Goresky and Pardon in 
(what we feel is) its natural setting. This has in any case the effect of making statements more transparent and proofs shorter.  
Key to this approach are the rather simple concepts  formulated in Section \ref{sect:2}. Our point of departure is not just $X$ with its 
Zariski open-dense subset, but rather an analytic  stratification $\Scal$ of $X$ for which $\mathring{X}$ is the union of the open strata. 
We introduce (in  \ref{def:basic2}) certain analytic control data  on $(X, \Scal)$ embodied in the notion of a \emph{system of  local retractions}. 
For a stratification $(X, \Scal)$ thus endowed, we define (in \ref{def:basic3}) a corresponding notion for a holomorphic vector bundle on  
$\mathring{X}$, namely that of an \emph{isoholonomic flat structure}. This structure may also be regarded as a set of control data 
(in the sense  of stratification theory), but now on the vector bundle and compatible with the local retractions. Both notions are analytic in character 
and have algebraic counterparts. Proposition  \ref{prop:main1} states that this last structure suffices to produce a natural lift to $X$ of  the complex Chern classes. We then show that such 
structures are present on the Baily-Borel stratification resp.\ an automorphic vector bundle, so that this recovers the result of Goresky-Pardon.  
We work this out in the case of the symplectic group. 

The second main result pertains to the complete, complex-algebraic setting, where we prove (Theorem \ref{thm:main}) that these Chern class lifts have the 
expected Hodge level, provided that $(X, \Scal)$ admits (what we have called) a \emph{stratified resolution} 
(Definition \ref{def:stratares}). This leads to a simplicial resolution of $X$ by complete nonsingular varieties which satisfies cohomological 
descent so that it can be used to describe the mixed Hodge structure on the cohomology of $X$. 

The third part applies this  to Baily-Borel compactifications:  Theorem \ref{thm:bbres} states that some of Mumford's toroidal resolutions of a Baily-Borel compactification give rise to a stratified resolution. Since these can be used to identify the mixed Hodge  structure on the 
cohomology of a Baily-Borel compactification, we hope that this will find other applications as well.

Our fourth contribution is an application of the preceding to the stable cohomology of the Satake (=Baily-Borel) compactification $\Acal_g^\bb$ of $\Acal_g$. Charney and Lee \cite{charney-lee} have 
shown that for a fixed $k$, $H^k(\Acal_g^\bb; \QQ)$ stabilizes as $g\to \infty$ and that the resulting stable cohomology $H^\pt$ has the structure of a 
$\QQ$-Hopf algebra. This is in fact a polynomial algebra with primitive basis $\widetilde{ch}_{2r+1}\in H^{4r+2}$ ($r\ge 0$) and $y_r\in  H^{4r+2}$ ($r>0$), although the $\widetilde{ch}_{2r+1}$ is not canonically defined (it is a lift of the corresponding Chern character of the Hodge bundle  on $\Acal_g$) and $y_r$ is only defined up to sign. So
$H^{4r+2}_\prim$ is of dimension 2  when $r>0$. Jiaming Chen and the  author \cite{chen-looijenga} have recently shown that $H^\pt$ has a natural mixed Hodge structure that gives $H^{4r+2}_\prim$ 
the structure of a Tate extension: it is an extension of $\QQ(-2r-1)$ (which has the image of $\widetilde{ch}_{2r+1}$ as generator) by $\QQ(0)$ (which has  $y_r$ as generator).
With the help of the results described above, we find that the one-dimensional space $F^{2r+1}H^{4r+2}_\prim$ is in fact spanned by the Goresky-Pardon Chern character of the Hodge bundle on $\Acal_g$ ($g\gg r$). We then use the theory that underlies the construction of the Beilinson regulator to compute the  class of this Tate extension (Theorem \ref{thm:main3}) and find it to be nonzero. At the same time we show that the Goresky-Pardon  Chern character in question  has a real part that is rational (so lies in $H^{4r+2}_\prim$), but that its imaginary part is nonzero. This answers (negatively) the question  asked by Goresky-Pardon ((1.6) of \cite{gp}) whether their lift always lives in rational  cohomology.  Our examples leave open the possibility  that this is so for the real part of this class (say, in the setting of an automorphic vector bundle over the interior of a Baily-Borel compactification).
\\

We close this introduction with a brief discussion of how this is connected with other work in this area.  Goresky and Tai proved in \cite{gt} that an 
automorphic vector bundle on a locally symmetric variety $\mathring{X}$ extends naturally to what is called the \emph{reductive Borel-Serre compactification} of $\mathring{X}$. This compactification, which we shall denote  for the purpose of this introduction by $\widehat X$, has a real-analytic structure and dominates $X$ in the sense that the latter is naturally a quotient of $\widehat X$, but  lives by no means in the complex-analytic category. Goresky and Tai predicted that the  Chern classes of their extension are simply pull-backs of the Goresky-Pardon Chern classes to $\widehat X$ and this was later proved by Zucker \cite{zucker:rbs} (with some corrections supplied by Ayoub and Zucker \cite{az}, see also \cite{nair:ms}). Shortly afterwards  Zucker \cite{zucker:rbs3} showed that  the quotient map $\widehat X\to X$, despite not being in any sense a morphism of complex-algebraic varieties, behaves from a cohomological point of  view as if it were, for he proved that $H^\pt (\widehat  X)$ carries a natural mixed Hodge structure such that the induced map $H^\pt(X)\to H^\pt(\widehat X)$ is a morphism in this category. Very recently Arvind Nair showed in \cite{nair:rbs}  that the Chern classes of the Goresky-Tai extension have the expected Hodge level and he there formulated our second  main result  as a conjecture (a conjectural picture is formulated in subsection (4.3) of \cite{nair:rbs}), something we had not been aware of while working  on this project. In light of Zucker's result, our theorem implies the property proved by Nair,  but is not equivalent to it, as the map $H^\pt(X)\to H^\pt(\widehat X)$ may not be injective. 

In correspondence with Klaus Hulek and others in connection with \cite{chen-looijenga} we had wondered about the possibly nontrivial nature of the above Tate extension. Via audience feedback to a talk of his at the IAS (that apparently had made mention of this question), we learned that the work of Nair might shed light on this and indeed, when we wrote Nair, he informed us (in April 2015) that his techniques---which involve among other things local Hecke operators and analytic results due to Franke---enable  him to determine the class of this extension (which turned out to be nonzero). The proof given here was found thereafter (September 2015), but is,  we understand,  quite different from his.
\\

It is a pleasure to acknowledge the numerous conversations with Spencer Bloch on this material. He drew my attention to the Chern class extensions  defined by Goresky-Pardon, and also suggested  (at a time when neither of us was aware that Nair had in fact conjectured this) that these classes might have the Hodge level property that is established here.  I am also indebted to Mark Goresky, who pointed out to me a subtlety regarding the partial flat structures on automorphic bundles that I had overlooked.

I am also grateful to the comments of two referees.

\section{Chern classes in a stratified setting}\label{sect:2}

\subsection*{Isoholonomic relative connections} Let $\rho: M\to S$ be a submersion of  complex manifolds and let $\Fcal$ be a holomorphic vector bundle on $M$ of rank $r$.  We need the following three notions relative to $\rho$. 

\begin{definition}\label{def:basic1}
 We say that a $C^\infty$-differential form on $M$ is \emph{$\rho$-basic} if  it is locally the pull-back along $\rho$ of a form on $S$. 

A \emph{$\rho$-connection} on $\Fcal$ is a holomorphic connection along the fibers of $\rho$, i.e., is given by a $\rho^{-1}\Ocal_S$-linear map $\nabla_\rho: \Fcal\to \Omega_\rho\otimes\Fcal$ satisfying the Leibniz property:
$\nabla_\rho (\phi s)=\phi \nabla_\rho(s)+d_\rho(\phi)\otimes s$. We say that  it is \emph{flat} if its  curvature form (an $\Ocal_M$-homomorphism $\Fcal\to \Omega^2_\rho\otimes\Fcal$) is identically zero. 
We say that such a flat $\rho$-connection  on $\Fcal$ is  \emph{isoholonomic} if  we can cover $S$ by  open subsets $V$ such that $\nabla_\rho|\rho^{-1}V$ can be lifted to a flat holomorphic connection on $\Fcal |\rho^{-1}V$.
\end{definition}

Let us comment on these definitions. We begin with  observing  that if $\rho$ factors through a submersion 
$\rho': M\to S'$ and one of the three properties above holds for $\rho$, then that property also holds for $\rho'$.

Next we note that  we can drop the adjective `locally' in the definition of a $\rho$-basic form if the fibers of $\rho$ are connected: it is then just the pull-back of a form on $S$. This is still true  if the set of  $v\in S$ for which $\rho^{-1}(v)$ is connected contains an open-dense subset of $S$
(we then say that \emph{a general fiber of $\rho$ is connected}).

For a  flat $\rho$-connection $\nabla_\rho$ on $\Fcal$, its flat  local sections make up a subsheaf $\FF\subseteq\Fcal$. This is a locally free $\rho^{-1}\Ocal_S$-submodule of rank $r$ with the property that the natural map
$\Ocal_M\otimes_{\rho^{-1}\Ocal_S}\FF\to \Fcal$ is an isomorphism. The converse also holds: a subsheaf $\FF\subseteq\Fcal$
with these properties determines  a flat $\rho$-connection on $\Fcal$. This also amounts to giving a (maximal) atlas  of local holomorphic trivializations  of $\Fcal$ whose transition functions factor through $\rho$.
In the  situations that we shall consider, 
$\rho$ will be  topologically locally trivial with connected fibers, and then a given flat $\rho$-connection is isoholonomic precisely if its holonomy along $\rho^{-1}v$ 
(given as a $\GL(r, \CC)$-orbit in $\Hom( \pi_1(\rho^{-1}v), \GL (r, \CC))$) is locally constant  in  $v\in S$ in an evident  sense. Whence the terminology.

Let be given an isoholonomic $\rho$-connection $\nabla_\rho$ on $\Fcal$. We then have an open covering $\{V_\alpha\}_\alpha$ of 
$S$ and for every $\alpha$  a flat holomorphic connection $\nabla^\alpha: \Fcal|\rho^{-1}V_\alpha\to \Omega_M\otimes \Fcal|\rho^{-1}V_\alpha$ which lifts $\nabla_\rho|: \rho^{-1}V_\alpha: \Fcal|\rho^{-1}V_\alpha\to \Omega_\rho\otimes \Fcal|\rho^{-1}V_\alpha$. 
If $\{\phi_\alpha\}_\alpha$ is a $C^\infty$ partition of unity on $S$ with $\sup (\phi_\alpha)\subseteq V_\alpha$, then 
$\nabla:=\sum_\alpha \rho^*(\phi_\alpha) \nabla^\alpha$ is a $C^\infty$-connection on $\Fcal$ which globally lifts $\nabla_\rho$ in a particular way: in terms of a local trivialization in the atlas described  above, this connection is given by a matrix of $\rho$-basic forms of type $(1,0)$. Its curvature is therefore
given by a matrix of $\rho$-basic 2-forms of Hodge level $\ge 1$ (i.e., is a sum of  forms of type $(2,0)$ and $(1,1)$). Hence the Chern form $\Chern_k(\Fcal, \nabla)$ is  a $\rho$-basic $2k$-form of Hodge level $\ge k$. Note that this remains so if we alter the connection $\nabla_\rho$ by adding to it a nilpotent relative differential $\eta$ (i.e., a section of $\Omega_{\rho}\otimes \Ecal\!nd(\Fcal)$ that takes values in the nilpotent endomorphisms), for then the curvature form of $\nabla$ will be nilpotent along the fibers of $\rho$ and so $\Chern_k(\Fcal, \nabla+\eta)$ will map to zero in $\Omega^{2k}_\rho$. Since  $\Chern_k(\Fcal, \nabla+\eta)$ is also closed, it is then still $\rho$-basic. So when the general fiber of $\rho$ is connected, $\Chern_k(\Fcal, \nabla+\eta)$ is the pull-back of one on $S$. In particular, the complex $k$th Chern class  of $\Fcal$ lies in the image of $\rho^*: H^{2k}(S; \CC)\to H^{2k}(M;\CC)$. 

But as we will see in our main application, 
it is possible for $\FF$ to have nontrivial holonomy along the fibers of $\rho$ and so $\FF$ need not be a sheaf  pull-back of a 
holomorphic vector bundle on $S$. 
\\

We next extend this to a stratified setting. This naturally leads us to consider `germ versions' of the notions we just introduced.
Let $X$ be a complex-analytic variety endowed with a stratification $\Scal$, by which we mean a finite partition of $X$ into connected nonsingular locally closed subvarieties, called \emph{strata}, such that the closure of a stratum is a subvariety that is a  union of strata. We partially order the collection of strata by letting  $S'\le S$ mean  that $S'\subseteq\overline S$.   

\begin{definition}\label{def:basic2}
A \emph{system of retractions $\rho=(\rho_S)_S$} for $(X, \Scal)$ assigns to each $S\in\Scal$  an analytic retraction $\rho_S :X_S\to S$  with the property that  when $S'<S$, then $\rho_S'\rho_S=\rho_S'$ on $X_{S'}$. We then say that $(X, \Scal, \rho)$ is a  \emph{rigidified} stratified variety\footnote{There is of course also a $C^\infty$-variant of this notion, but we will here be only interested in the holomorphic version.}.
\end{definition}

Here $X_S$ denotes the germ of $X$ at $S$ and so this means that $\rho_S$  is represented by an analytic retraction whose domain is  an unspecified  neighborhood $U_S$ of $S$ in $X$. If the stratification $\Scal$ satisfies Whitney's $(a)$ condition,  then we  may  take $U_S$ so small such that  for every $S'\in \Scal$,  $\rho_S|U_S\cap S'$ is a submersion.  Note that for every  $S\in\Scal$,  the collection $\{\rho_{S'}|\overline S\}_{S'<S}$ is a system of retractions for $(\overline S, \Scal |\overline S)$. A complex submanifold of a complex manifold need not  be a holomorphic retract of some neighborhood of it and so the mere existence of such a system indicates that the stratification is  quite special. A standard example is the natural stratification of a torus embedding. We will see that the Satake and toric compactifications of a locally symmetric variety also come with this structure.

We make the rigidified stratified varieties objects of a category: a morphism $(\tilde X, \tilde\Scal, \tilde\rho)\to (X, \Scal, \rho)$ is given by a complex-analytic morphism $\pi :\tilde X\to X$ that takes any stratum $\tilde S$ of $\tilde\Scal$ submersively  to a stratum $S$ of $\Scal$ in such a manner that on a neighborhood of $\tilde S$ we have $\pi\tilde\rho_{\tilde S}=\rho_S\pi$ and we  demand that the preimage  of the union of the  open strata in $X$ is equal to the union of the  open strata in $\tilde X$.

\emph{From now on  $(X, \Scal, \rho)$ is a rigidified stratified variety with $X$ topologically normal} (in the sense that normalization is a homeomorphism). We denote by $\mathring{X}\subseteq X$ the union of the  open strata and by  $j: \mathring{X}\subseteq X$ and $i_S: S\subseteq X$ ($S\in\Scal$) the inclusions. We write $\mathring{\rho}_S$  for the restriction of $\rho_S$ to $\mathring{X}$.  The assumption of topological normality  guarantees that a general fiber  of $\mathring{\rho}_S$ is connected.

\begin{definition}\label{def:basic3}
We say that a $C^\infty$-differential form on $\mathring{X}$ is 
\emph{$\rho$-basic} if for every stratum $S\in\Scal$, its germ at $S$ is $\mathring{\rho}_S$-basic. 
\end{definition}

The $\rho$-basic $C^\infty$-differential forms in $j_*\Ascr^\pt_{\mathring{X}}$ make up a differential (bigraded) subalgebra  $\Ascr^\pt_{X, \rho}$ that is a fine resolution of the constant sheaf $\CC_X$ on $X$ (see Verona \cite{verona} and  Theorem 4.2 in \cite{gp}). It has  the property that for all $S\in\Scal$, 
$i_S^{-1}\Ascr^\pt_{X, \rho}=\Ascr^\pt_S$.  Its holomorphic part defines a subcomplex  $\Omega^\pt_{X, \rho}\subseteq j_*\Omega^\pt_{\mathring{X}}$ with a similar property: $i_S^{-1}\Omega^\pt_{X, \rho}=\Omega^\pt_S$. This is also a resolution of the  constant sheaf $\CC_X$ and we can regard  $\Ascr^\pt_{X, \rho}$ as a double complex which resolves it. So $(\Ascr^{p,\pt}_{X, \rho}, \bar\partial)$ resolves $\Omega^p_{X, \rho}$ and we have a Hodge-De Rham  spectral sequence
\[
E^{p,q}_2=H^q(X, \Omega^p_{X, \rho})\Rightarrow H^{p+q}(X, \CC).
\]
If we are in the complex projective setting, then one may wonder whether this spectral sequence degenerates and yields the Hodge filtration of the mixed Hodge  structure on $H^\pt (X)$. This is not so in general (there exist examples for which $H^0(X, \Omega^p_{X, \rho})=0\not=F^pH^p(X)$), but if it is  at least true that the limit filtration of this spectral sequence refines the Hodge filtration, then we would have a generalization of Theorem \ref{thm:main} below and would probably also end up with a simpler proof of it.

Note that a morphism $\pi: (\tilde X, \tilde\Scal, \tilde\rho)\to (X, \Scal, \rho)$ determines a map of sheaf complexes $\pi^{-1}\Ascr^\pt_{X, \rho}\to \Ascr^\pt_{\tilde X, \tilde\rho}$ which induces the usual map $\pi^*: H^\pt (X; \CC)\to  H^\pt (\tilde X; \CC)$ on cohomology.

We extend the notions introduced in \ref{def:basic1} to this stratified germ type of setting:

\begin{definition}\label{def:basic3}
Let $\Fcal$ be a holomorphic vector bundle on $\mathring{X}$.  A \emph{flat $\rho$-connection}  on $\Fcal$ assigns to every $S\in\Scal$ a flat $\rho_S$-connection $\nabla_{\rho_S}$ on  $i_S^{-1}j_*\Fcal$ (it is then denoted $\nabla_\rho$) and is subject to a compatibility condition up to nilpotents:
noting that for any pair of incident strata $S\ge S'$, the connection $\nabla_{\rho_{S'}}$ induces a flat connection along the fibers of  $\mathring{\rho}_S$ on the common domain of $\mathring{\rho}_S$ and  $\mathring{\rho}_{S'}$ (for the submersion $\mathring{\rho}_{S'}$ there factors through $\mathring{\rho}_S$) so that we may write this connection there as $\nabla_{\rho_S}+\eta^{S'}_S$ with $\eta^{S'}_S$ a section of $\Omega_{\rho_S}\otimes\Ecal\! nd(\Fcal)$, then we require that
$\eta^{S'}_S$ is a \emph{nilpotent} relative differential. More generally, whenever we have a chain of strata $S>S_1>\cdots> S_n$, we ask that the $\eta^{S_1}_{S}, \dots ,\eta^{S_n}_{S} $ span on their common domain of definition a complex vector space  of nilpotent relative differentials.

We say that the flat $\rho$-connection on $\Fcal$ is \emph{isoholonomic} if  for every $S\in \Scal$, $\nabla_{\rho_S}$ is so on $i_S^{-1}j_*\Fcal$.
\end{definition}

Given a flat $\rho$-connection on $\Fcal$, then its $\rho$-flat local sections  define a subsheaf of $j_*\Fcal$, but this subsheaf can be zero on certain strata and is probably of little interest unless the holonomies are trivial.  More relevant to us  is the subsheaf of $\Ocal_{X,\rho}$-algebras $\mathcal{E}\!nd(\Fcal, \nabla_\rho)\subseteq j_*\mathcal{E}\!nd(\Fcal)$ of 
 $\rho$-flat local endomorphisms of $j_*\Fcal$, at least when the  flat $\rho$-connection on $\Fcal$ is isoholonomic, for then 
 $i_S^{-1} \mathcal{E}\!nd(\Fcal, \nabla_\rho)$ is  locally like $\mathcal{E}\!nd_{\Ocal_S}(\Ocal_S^r)$ (it is a sheaf of Azumaya $\Ocal_S$-algebras).

Note that if  $\pi: (\tilde X, \tilde\Scal, \tilde\rho)\to (X, \Scal, \rho)$ is a morphism of rigidified stratified, topological normal varieties, then a flat $\rho$-connection on $\Fcal$ determines one on the pull-back of $\Fcal$ along $\mathring{\pi}: \mathring{\tilde X}\to \mathring{X}$ (that we simply denote by $\pi^*\nabla_\rho$): if $\pi$ maps $\tilde S\in \tilde\Scal$ to $S\in\Scal$, then $\nabla_{\rho_S}$ determines in an obvious manner a flat connection along the fibers of  $\mathring{\tilde\rho}_{\tilde S}$ and the resulting system has the required properties. It is isoholonomic when $\nabla_\rho$ is.

\begin{proposition}\label{prop:main1}
With a holomorphic vector bundle $\Fcal$ on $\mathring{X}$ that is endowed with an isoholonomic flat $\rho$-connection $\nabla_\rho=(\nabla_{\rho_S})_{S\in\Scal}$
is associated a complex Chern class lift  $\chern_k(\Fcal, \nabla_\rho)\in H^{2k}(X; \CC)$ of $\chern_k(\Fcal)_\CC\in H^{2k}(\mathring{X}; \CC)$ ($k\ge 0$), which is functorial in the sense that  if $\pi: (\tilde X, \tilde S, \tilde\rho)\to (X, \Scal, \rho)$ is a morphism of rigidified stratified spaces, then 
$\pi^*\chern_k(\Fcal, \nabla_\rho)=\chern_k(\mathring{\pi}^*\Fcal, \tilde\nabla_{\tilde\rho})$. It also has the property that if the relative holonomy of $\nabla_{\rho_S}$ is trivial at every point of $S$ and for any pair of incident strata $S\ge S'$, $\eta_S^{S'}=0$,
then $(\Fcal, \rho)$ extends naturally  to $(\hat\Fcal, \rho)$ on $X$ (as a holomorphic vector  bundle with flat connections along the retractions) and  $\chern_k(\Fcal, \nabla_\rho)=\chern_k(\hat\Fcal)_\CC$.
\end{proposition}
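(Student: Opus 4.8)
The plan is to construct the lift at the level of differential forms, using the fine resolution $\Ascr^\pt_{X,\rho}$ of $\CC_X$ recalled above, and then check functoriality and the degeneration-to-the-classical-case statement. First I would unwind the isoholonomic hypothesis on each stratum exactly as in the ``absolute'' discussion preceding Definition \ref{def:basic2}: for each $S\in\Scal$ choose an open cover $\{V_\alpha\}$ of $S$ together with flat holomorphic connections $\nabla^{S,\alpha}$ on $i_S^{-1}j_*\Fcal$ over $\rho_S^{-1}V_\alpha$ lifting $\nabla_{\rho_S}$, and a $C^\infty$ partition of unity $\{\phi_\alpha\}$ subordinate to it, so that $\nabla_S:=\sum_\alpha \rho_S^*(\phi_\alpha)\nabla^{S,\alpha}$ is a $C^\infty$-connection on $i_S^{-1}j_*\Fcal$ whose connection matrix, in the preferred atlas of $\rho_S$-trivializations, consists of $\rho_S$-basic $(1,0)$-forms. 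Its Chern form $\Chern_k(\Fcal,\nabla_S)$ is then a closed $\rho_S$-basic $2k$-form of Hodge level $\ge k$, hence (since a general fiber of $\mathring\rho_S$ is connected, by topological normality) the pull-back under $\mathring\rho_S$ of a form on $S$, and in particular it lies in $\Ascr^{2k}_{X,\rho}$ after extension across $S$.

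The main obstacle is that these stratum-by-stratum connections $\nabla_S$ need not agree on overlapping germs: on the common domain of $\mathring\rho_S$ and $\mathring\rho_{S'}$ with $S'<S$ the two lifts differ, and the compatibility is only up to the nilpotent relative differentials $\eta^{S'}_S$. The key point is that this error is invisible to Chern forms: as already observed in the excerpt, adding a nilpotent relative differential to a relative connection changes the curvature by something nilpotent along the fibers, so the Chern form maps to zero in $\Omega^{2k}_{\mathring\rho_S}$, and being closed and relatively exact-in-cohomology it remains $\rho_S$-basic, hence a pull-back from $S$. Therefore I would argue that although the local $C^\infty$ Chern forms attached to different strata do not glue on the nose, the classes they represent in $H^{2k}(\,\cdot\,;\CC)$ of the relevant germs do glue. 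Concretely: fix one globally defined $C^\infty$-connection $\nabla$ on $\Fcal$ over $\mathring X$ obtained by patching the $\nabla_S$ with a partition of unity on $X$ subordinate to the cover by the germ-neighbourhoods $U_S$ (refined so as to respect incidence of strata), and show that near each $S$ the resulting $\Chern_k(\Fcal,\nabla)$ is cohomologous, \emph{within} $\Ascr^\pt_{X,\rho}|U_S$, to $\Chern_k(\Fcal,\nabla_S)$; the Chern–Simons transgression between two connections whose difference is a $\rho_S$-basic matrix of $(1,0)$-forms plus nilpotent relative terms is itself $\rho_S$-basic, so the transgression lives in $\Ascr^\pt_{X,\rho}$. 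This produces a well-defined class $\chern_k(\Fcal,\nabla_\rho)\in H^{2k}(X;\CC)$ computed by $\Ascr^\pt_{X,\rho}$, independent of all choices by the standard Chern–Simons argument carried out inside the subcomplex $\Ascr^\pt_{X,\rho}$; restricting to $\mathring X$ gives back $\chern_k(\Fcal)_\CC$ since there $\Ascr^\pt_{X,\rho}$ is just $\Ascr^\pt_{\mathring X}$.

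Functoriality is then formal: given a morphism $\pi\colon(\tilde X,\tilde\Scal,\tilde\rho)\to(X,\Scal,\rho)$, the induced map of sheaf complexes $\pi^{-1}\Ascr^\pt_{X,\rho}\to\Ascr^\pt_{\tilde X,\tilde\rho}$ (noted in the excerpt) sends a $\rho_S$-basic representative of $\chern_k(\Fcal,\nabla_\rho)$ to a $\tilde\rho$-basic form; because $\pi\tilde\rho_{\tilde S}=\rho_S\pi$ on a neighbourhood of $\tilde S$, the pulled-back connection $\pi^*\nabla_\rho$ has the lifts $\pi^*\nabla^{S,\alpha}$ available over $\tilde\rho_{\tilde S}^{-1}(\pi^{-1}V_\alpha)$, and a partition-of-unity patch for $\pi^*\nabla_\rho$ is $\pi^*$ of one for $\nabla_\rho$ up to a $\tilde\rho$-basic transgression, so the representatives match in $\Ascr^\pt_{\tilde X,\tilde\rho}$; hence $\pi^*\chern_k(\Fcal,\nabla_\rho)=\chern_k(\mathring\pi^*\Fcal,\tilde\nabla_{\tilde\rho})$. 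Finally, for the last assertion: if every relative holonomy is trivial and all $\eta^{S'}_S=0$, the $\rho_S$-flat local sections of $i_S^{-1}j_*\Fcal$ form a locally free $\rho_S^{-1}\Ocal_S$-module of rank $r$ with $\Ocal_{X_S}\otimes(\cdot)\xrightarrow{\ \sim\ }i_S^{-1}j_*\Fcal$, and these descend to an honest holomorphic vector bundle $\hat\Fcal_S$ on the germ $S$ with $\rho_S^*\hat\Fcal_S\cong i_S^{-1}j_*\Fcal$; the compatibility $\rho_{S'}\rho_S=\rho_{S'}$ for $S'<S$ together with $\eta^{S'}_S=0$ makes these descents agree on overlaps, so they patch to a global holomorphic bundle $\hat\Fcal$ on $X$ restricting to $\Fcal$ on $\mathring X$ and equipped with relative flat connections along the $\rho_S$. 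Choosing a connection on $\hat\Fcal$ and pulling back along the retractions shows one may take the $\nabla_S$ above to be genuine pull-backs, whence $\Chern_k(\Fcal,\nabla_\rho)$ represents $\chern_k(\hat\Fcal)_\CC$ in $H^{2k}(X;\CC)$. I expect the delicate bookkeeping to be organizing the partitions of unity on $X$ compatibly with the tower of incident germ-neighbourhoods $U_{S'}\supseteq U_S$ for $S'<S$, so that every patched object stays inside $\Ascr^\pt_{X,\rho}$; Whitney $(a)$ (invoked after Definition \ref{def:basic2}) is what lets one shrink the $U_S$ to make the $\rho_S$ submersive on strata and the constructions coherent.
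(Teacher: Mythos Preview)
Your overall strategy is correct in spirit, but the two-stage patching creates an unnecessary complication and leaves a gap at the crucial step. After constructing the local connections $\nabla_S$ (each with $\rho_S$-basic Chern form), you patch them into a global $\nabla$ with a second partition of unity subordinate to $\{U_S\}$, and then want to show that $\Chern_k(\Fcal,\nabla)$ is cohomologous \emph{within} $\Ascr^\pt_{X,\rho}|U_S$ to $\Chern_k(\Fcal,\nabla_S)$. But that presupposes $\Chern_k(\Fcal,\nabla)$ already lies in $\Ascr^\pt_{X,\rho}$, which you have not established: the second-stage partition of unity has no reason to consist of $\rho$-basic functions, so neither the connection matrices nor the Chern form of $\nabla$ are visibly $\rho$-basic. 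Your claim that the Chern--Simons transgression is $\rho_S$-basic would require $\nabla-\nabla_S$ to decompose into a $\rho_S$-basic piece plus a nilpotent relative piece, but the outer partition-of-unity coefficients spoil this unless they too are $\rho$-basic---which is exactly the point you defer to ``delicate bookkeeping'' at the end.

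The paper collapses your two steps into one. It covers $X$ by open sets $U_\alpha$ each containing a unique \emph{minimal} stratum $S_\alpha$ (so $S_\alpha\cap U_\alpha$ is closed in $U_\alpha$), with $\rho_{S_\alpha}$ and $\nabla_{\rho_{S_\alpha}}$ defined on $U_\alpha$ and a flat holomorphic lift $\nabla^\alpha$ available there; it then chooses a \emph{single} partition of unity $\{\phi_\alpha\}$ with each $\phi_\alpha$ factoring through $\rho_{S_\alpha}$. Since $\rho_{S'}\rho_S=\rho_{S'}$ for $S'<S$, each such $\phi_\alpha$ is automatically $\rho_S$-basic near every $S\ge S_\alpha$. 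Then near any stratum $S$ the relative part of $\nabla=\sum_\alpha\phi_\alpha\nabla^\alpha$ along $\rho_S$ is $\nabla_{\rho_S}$ plus a convex combination of the $\eta^{S_i}_S$ for the chain $S>S_1>\cdots>S_n$ of minimal strata occurring there, hence nilpotent along the fibres, so $\Chern_k(\Fcal,\nabla)$ is $\rho$-basic \emph{on the nose}, with no transgression patching needed. Independence of choices is the usual homotopy argument on $\CC\times X$ (equivalently your Chern--Simons argument, now carried out entirely inside $\Ascr^\pt_{X,\rho}$ because the space of such $\nabla$'s is affine). Your treatment of functoriality and of the trivial-holonomy extension $\hat\Fcal$ matches the paper's.
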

\begin{proof}
By assumption $X$ admits a covering by open subsets $U_\alpha$ with the property that there is unique $S_\alpha\in\Scal$ such that $S_\alpha\cap U_\alpha$ is closed, $\rho_{S_\alpha}$ and $\nabla_{\rho_{S_\alpha}}$ are defined on $U_\alpha$ and $\nabla_{\rho_{S_\alpha}}$ lifts to a flat holomorphic connection $\nabla^\alpha$ on $\Fcal |\mathring{U}_\alpha$. We can choose a partition of unity $\{\phi_\alpha :X\to [0,1]\}_\alpha$ with $\sup(\phi_\alpha)\subseteq U_\alpha$ and with $\phi_\alpha |U_\alpha$  factoring through $\rho_{S_\alpha}$. Then $\nabla:=\sum_\alpha\phi_\alpha\nabla^\alpha$ is a $C^\infty$-connection on $\Fcal$  with the property that any $S\in\Scal$  admits a neighborhood $U_S$ in $X$ such that  for any chain $S>S_1>\cdots >S_n$ in $\Scal$,
the relative connection that $\nabla$ induces along $\rho_{S}|\mathring{U}_S\cap U_{S_1}\cap\cdots U_{S_n}$ is a convex linear combination of 
$\nabla_{\rho_{S}}$ and $\eta^{S_1}_{S}, \dots, \eta^{S_n}_{S}$. This implies that the Chern form $\Chern_k(\Fcal,\nabla)$ is a $\rho$-basic closed form of Hodge level $\ge k$. By the fine resolution property cited above, it therefore defines a cohomology class $\chern_k(\Fcal,\nabla)$. 

The proof that this class is independent of our choices is a straightforward generalization of the standard proof and is based on the observation that the $C^\infty$-connections on $\Fcal$ satisfying the above property is an affine space. Indeed,  if ${}'\nabla$ is another such connection, then we define on the pull-back $\hat\Fcal:=pr_{\mathring{X}}^*\Fcal$ of $\Fcal$ along
$pr_{\mathring{X}}: \CC\times \mathring{X}\to \mathring{X}$ a connection $\hat\nabla$ given  on a pulled back section $pr_{\mathring{X}}^*s$ as  $(1-t)\nabla(s)+t\,{}'\nabla(s)$.  Then $\Chern_k(\hat\Fcal,\hat\nabla)$ defines (by the result above) a class  $\chern_k(\hat\Fcal, \hat\nabla)\in H^{2k}(\CC\times X)$. This class evidently restricts to  $\chern_k(\Fcal, \nabla)$ resp.\  $\chern_k(\Fcal, {}'\nabla)$ if we take the first coordinate $0$ resp.\ $1$ and so $\chern_k(\Fcal, \nabla)=\chern_k(\Fcal, {}'\nabla)$. It is straightforward to verify that these Chern classes have the asserted naturality behavior. 

Assume now that  the relative holonomy of $\nabla_{\rho_S}$ is trivial at every point of $S$ and for any pair of incident strata $S\ge S'$, $\eta_S^{S'}=0$.
Choose for every stratum $S$ a neighborhood $U_S$ of $S$  in $X$ contained in the domain of $\rho_S$ and $\nabla_{\rho_{S}}$ such that  
$\nabla_{\rho_{S}}$ has no holonomy on $\mathring{U}$. Then the subsheaf of $\Fcal|\mathring{U}_S$ of $\nabla_{\rho_S}$-flat sections has a direct image on $U_S$ whose restriction to $S$ is a holomorphic vector bundle. Its pull-back as a vector bundle along $\rho_S$ can on a neighborhood of $S$ in $\mathring{X}$ be identified with $\Fcal$ and so this defines an extension of $\Fcal$ across $U_S$. Since the $\eta_S^{S'}$ vanish, such extensions agree on overlaps. It follows that $\Fcal$ extends to a holomorphic bundle $\hat \Fcal$ on $X$. Although $X$ may be singular, a connection as constructed above extends to a connection $\hat\nabla$ on $\hat\Fcal$ in the sense that  it is locally given by a matrix with entries in $\Omega_{X}$ (so restrictions of holomorphic differentials on an ambient complex manifold). Then $\Chern_k(\hat\Fcal, \hat\nabla)$ is a $C^\infty$ $2k$-form (i.e.,  locally the restriction to $X$ of a form defined on an ambient $C^\infty$-manifold) and therefore defines a class in $H^{2k}(X;\CC)$. Since its restriction to $\mathring{X}$ is $\Chern_k(\Fcal, \nabla)$, this class is in fact $\chern_k(\Fcal, \nabla_\rho)$. 
\end{proof}

In the situation of the last clause of Proposition \ref{prop:main1}  we find that $\chern_k(\Fcal, \nabla_\rho)$ lifts to an integral class. But we will see that this is not so in general.

\begin{remark}\label{rem:chat}
We shall later want to work with Chern characters $ch_k$ rather then with Chern classes $c_k$. Since we always use $\QQ$-vector spaces as coefficients,  there is no loss of information here: $ch_k$ is a universal polynomial of weighted degree $k$ with rational coefficients in $c_1, \dots , c_k$ and vice versa.  
These Chern characters can also be obtained via an Atiyah class, which is perhaps closer in the spirit of algebraic geometry, albeit that they then come to be realized as De Rham classes.
 An isoholonomic flat $\rho$-connection  defines a natural lift of the  Atiyah class of $\Fcal$,  
$\aty (\Fcal)\in H^1(\mathring{X}, \Omega^1_{\mathring{X}}\otimes \mathcal{E}\!nd(\Fcal))$ to an element $\aty (\Fcal, \nabla_\rho)\in H^1(X, \Omega^1_{X, \rho}\otimes \mathcal{E}\!nd(\Fcal, \nabla_\rho))$. A representative  as a 1-\v Cech cocycle is obtained from the collection $(U_\alpha,\nabla^\alpha)_\alpha$  in the proof above: it is given by $U_{\alpha\beta}=U_\alpha\cap U_\beta  \mapsto \nabla^\beta-\nabla^\alpha$. We then define the  \emph{twisted Goresky-Pardon Chern character}  as the image of $\aty (\Fcal, \nabla_\rho)$ under the map
\begin{align*}
H^1(X, \Omega^1_{X, \rho}\otimes \mathcal{E}\!nd(\Fcal, \nabla_\rho))&\to \oplus_{k=0}^\infty H^k(X, \Omega^k_{X, \rho}),\\  
A&\mapsto \Tr(\exp (-A))=\sum_{k=0}^\infty\frac{\Tr \big((-A)^{\cup k}\big)}{k!}.
\end{align*}
This class is closed for all the differentials in the  Hodge-De Rham spectral sequence and then yields $(2\pi \sqrt{-1})^k \cha_k(\Fcal, \nabla_\rho)$.
This observation leads us to:
\end{remark}

\begin{corollary}\label{cor:real}
Suppose that  in the situation of Proposition \ref{prop:main1}, the setting is algebraic over $\RR$, that is, $X$, its stratification and the retractions
appearing there and the vector bundle $\Fcal$ are defined over $\RR$. Then the twisted Goresky-Pardon Chern character $(2\pi \sqrt{-1})^k \cha_k(\Fcal, \nabla_\rho)$ is fixed under full complex 
conjugation (acting on both $X$ and the coefficient field $\CC$).
\end{corollary}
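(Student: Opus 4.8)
The plan is to represent the twisted Goresky-Pardon Chern character by a Chern--Weil form that is visibly invariant under full complex conjugation. Write $\sigma$ for the anti-holomorphic involution of the complex manifold $X=X(\CC)$ attached to the real structure. Since $X$, the stratification $\Scal$ and the retractions $\rho=(\rho_S)_{S}$ are defined over $\RR$, the map $\sigma$ permutes the strata and satisfies $\rho_{\sigma S}\,\sigma=\sigma\,\rho_S$, so it is an anti-holomorphic automorphism of the rigidified stratified variety $(X,\Scal,\rho)$. Hence $\sigma^{*}$ maps the $\rho$-basic complex $\Ascr^{\pt}_{X,\rho}$ to itself (transposing the $(p,q)$-bigrading, $\sigma$ being anti-holomorphic), so that $A\mapsto\overline{\sigma^{*}A}$ --- pull back by $\sigma$, then conjugate coefficients --- is a $\CC$-antilinear automorphism of this complex that preserves the bigrading and covers the conjugation of $\CC_X$; the involution it induces on $H^{\pt}(X;\CC)$ is precisely the full complex conjugation of the statement.

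I will read ``the setting is algebraic over $\RR$'' as including that the isoholonomic flat $\rho$-connection $\nabla_\rho=(\nabla_{\rho_S})_{S}$ is defined over $\RR$ as well; concretely, that $\Fcal$ carries a $\CC$-antilinear bundle automorphism $\jmath$ above $\sigma$ intertwining $\nabla_{\rho_S}$ with $\nabla_{\rho_{\sigma S}}$ for every $S$. This extra hypothesis is genuinely needed: already on $X=\PP^{1}$ stratified by $\{0\}$, $\{\infty\}$, $\CC^{\times}$, a rank-one $\Fcal$ equipped with a flat $\rho$-connection having a non-real residue at $0$ yields a lift in $H^{2}(\PP^{1};\CC)\cong\CC$ that is not fixed by full complex conjugation. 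Granting it, $\jmath$ induces a real structure on $\mathcal{E}\!nd(\Fcal)$ compatible with the relative connections and carries each nilpotent relative differential $\eta^{S'}_{S}$ to the nilpotent relative differential $\overline{\sigma^{*}\eta^{S'}_{S}}=\eta^{\sigma S'}_{\sigma S}$.

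Next I would revisit the proof of Proposition~\ref{prop:main1} and arrange the auxiliary $C^{\infty}$-connection to be real. If $\nabla=\sum_\alpha\phi_\alpha\nabla^{\alpha}$ is a connection of the type constructed there, then $\overline{\sigma^{*}\nabla}$ is again a $C^{\infty}$-connection on $\Fcal$ which, near each stratum $S$ and along each chain $S>S_1>\cdots>S_n$, induces on $\mathring{\rho}_S$ a convex-linear combination of $\overline{\sigma^{*}\nabla_{\rho_S}}=\nabla_{\rho_S}$ and the $\eta^{S_i}_{S}$; since such connections form an affine space --- the observation underlying the independence assertion in \ref{prop:main1} --- one may replace $\nabla$ by $\tfrac12\bigl(\nabla+\overline{\sigma^{*}\nabla}\bigr)$, which is then a \emph{real} connection still of that type (its Chern--Weil form $\rho$-basic of Hodge level $\ge k$). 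For such a $\nabla$, Remark~\ref{rem:chat} identifies $(2\pi\sqrt{-1})^{k}\cha_k(\Fcal,\nabla_\rho)$ with the cohomology class of the $\rho$-basic closed $2k$-form
\[
\frac{(-1)^{k}}{k!}\,\Tr\bigl(R_\nabla^{\wedge k}\bigr),
\]
$R_\nabla$ the curvature of $\nabla$ (equivalently, the image under $A\mapsto\Tr\exp(-A)$ of the \v Cech cocycle $U_{\alpha\beta}\mapsto\nabla^{\beta}-\nabla^{\alpha}$). Since $R\mapsto\frac{(-1)^{k}}{k!}\Tr(R^{\wedge k})$ is an invariant polynomial with rational coefficients and $\nabla$ is real (so $\overline{\sigma^{*}R_\nabla}=R_\nabla$ as an $\mathcal{E}\!nd(\Fcal)$-valued form, via $\jmath$), this $2k$-form is fixed by $A\mapsto\overline{\sigma^{*}A}$; hence $(2\pi\sqrt{-1})^{k}\cha_k(\Fcal,\nabla_\rho)$ is fixed by full complex conjugation. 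The twist by $(2\pi\sqrt{-1})^{k}$ is essential: full complex conjugation acts on $\cha_k$ itself by $(-1)^{k}$ (already on $\mathring{X}$, since $\sigma^{*}\Fcal$ is the conjugate bundle), and it is exactly the passage to the ``de Rham normalized'' class $\Tr\exp(-\aty(\Fcal,\nabla_\rho))$ that absorbs this sign.

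The one point that asks for care is the step just used: that in the algebraic-over-$\RR$ setting the flat $\rho$-connection may be taken defined over $\RR$ (without which, as the $\PP^{1}$ example shows, the conclusion fails), and that the auxiliary connection of \ref{prop:main1} can be symmetrized without losing the property that its Chern--Weil form is $\rho$-basic of Hodge level $\ge k$ --- i.e.\ that the admissible connections form a non-empty affine space stable under $A\mapsto\overline{\sigma^{*}A}$. With that in hand the Chern--Weil identity above finishes the argument; equivalently one may run it through the Atiyah class, noting that $\aty(\Fcal,\nabla_\rho)$ lies in the real form of $H^{1}\bigl(X,\Omega^{1}_{X,\rho}\otimes\mathcal{E}\!nd(\Fcal,\nabla_\rho)\bigr)$ and that $A\mapsto\Tr\exp(-A)$ is defined over $\RR$.
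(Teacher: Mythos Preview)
Your argument is correct and arrives at the same conclusion, but by a somewhat different path than the paper's. The paper works directly with the \v Cech description of the Atiyah class from Remark~\ref{rem:chat}: it chooses the collection $(U_\alpha,\nabla^\alpha)_\alpha$ so that the involution $\iota$ acts on the index set with $\iota(U_\alpha)=U_{\iota\alpha}$ and $\iota^*\nabla^{\iota\alpha}=\overline{\nabla}^{\alpha}$, and then reads off immediately that the \v Cech cocycle $U_{\alpha\beta}\mapsto\nabla^\beta-\nabla^\alpha$, and hence $\Tr\exp(-\aty(\Fcal,\nabla_\rho))$, is fixed by full complex conjugation. You instead symmetrize at the level of the global $C^\infty$-connection, replacing $\nabla$ by $\tfrac12(\nabla+\overline{\sigma^*\nabla})$, and then invoke the Chern--Weil form. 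Both routes rest on the same observation---that the admissible local flat lifts (equivalently, the admissible global $C^\infty$-connections) form an affine space stable under the conjugation---but the paper exploits this by making an equivariant \emph{choice} of the covering data, while you exploit it by \emph{averaging}. The paper's version is a couple of lines shorter; yours has the virtue of making explicit, via the $\PP^1$ counterexample, that the reality of the flat $\rho$-connection is a genuine part of the hypothesis (the paper's phrase ``the setting is algebraic over $\RR$'' is meant to include it, and its proof uses it in the form $\iota^*\nabla^{\iota\alpha}=\overline{\nabla}^{\alpha}$, but does not spell this out).
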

\begin{proof}
We must verify  that 
$(2\pi \sqrt{-1})^k \cha_k(\Fcal, \nabla_\rho)$ is fixed under  the anti-linear map $z\in H^\pt (X; \CC)\mapsto \overline{\iota^*z}\in H^\pt (X; \CC)$, where
$\iota :X\to X$ is complex conjugation. In the \v Cech description of the Atiyah class above we can choose the collection $(U_\alpha,\nabla^\alpha)_\alpha$ in such a manner  that $\iota$ acts compatibly on our index set: $\iota(U_\alpha)=U_{\iota\alpha}$ and $\iota^*\nabla^{\iota\alpha}=\overline{\nabla}^{\alpha}$.
Then it is clear from the definition that $(2\pi \sqrt{-1})^k \cha_k(\Fcal, \nabla_\rho)$ has the asserted property.
\end{proof}

\begin{theorem}\label{thm:main}
Suppose that  in the situation of Proposition \ref{prop:main1}, the setting is algebraic  and  that $X$ is compact. Suppose moreover that the resolution $\pi:\tilde X\to X$ that satisfies the holonomy property with respect to $(\Fcal, \nabla_\rho)$ extends to a \emph{stratified resolution} in the sense below. Then $\chern_k(\Fcal, \nabla_\rho)$ is of Hodge level $\ge k$, i.e., lies in  $F^kH^{2k}(X;\CC)$. 
\end{theorem}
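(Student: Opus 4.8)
The plan is to compare the $\rho$-basic de Rham complex on $X$ with the logarithmic de Rham complex on a good compactification of $\tilde X$, and to use the hypothesis of a stratified resolution to show that the pull-back map $\pi^*$ is compatible with Hodge filtrations in a way that lets us read off the Hodge level of $\chern_k(\Fcal,\nabla_\rho)$ from that of its image upstairs. First I would recall from Proposition \ref{prop:main1} that $\chern_k(\Fcal,\nabla_\rho)$ is represented by a $\rho$-basic closed $2k$-form $\Chern_k(\Fcal,\nabla)$ of Hodge level $\ge k$, i.e.\ a cocycle in $F^k$ of the double complex $(\Ascr^{\pt,\pt}_{X,\rho},\p,\bar\p)$, hence a class in $\im\big(H^{2k}(X,\Omega^{\ge k}_{X,\rho})\to H^{2k}(X;\CC)\big)$ via the Hodge--De Rham spectral sequence of the excerpt. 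The issue is that this ``naive'' Hodge filtration on $H^\pt(X;\CC)$ coming from $\Omega^\pt_{X,\rho}$ need not agree with the genuine Hodge filtration of the mixed Hodge structure on $H^\pt(X)$ — as the excerpt itself warns — so being in naive-$F^k$ is a priori weaker than what we want; we must use the resolution.

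Next I would use the stratified resolution $\pi:\tilde X\to X$. By the stated cohomological-descent property, the (mixed) Hodge structure on $H^\pt(X)$ is computed by the simplicial variety $\tilde X_\bullet$ arising from $\pi$ and its iterated fibre products, all of which are complete nonsingular; on each such smooth complete variety the Hodge filtration is the classical one. The key compatibility is that the morphism of rigidified stratified varieties $\pi$ induces, on the level of sheaf complexes, a map $\pi^{-1}\Ascr^\pt_{X,\rho}\to \Ascr^\pt_{\tilde X,\tilde\rho}$ that respects the bigrading, hence the naive Hodge filtrations; so $\pi^*\chern_k(\Fcal,\nabla_\rho)$ lies in naive-$F^k$ of $H^{2k}(\tilde X;\CC)$. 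But on the smooth complete variety $\tilde X$ the naive filtration from $\rho$-basic forms does refine (in fact is contained in) the genuine Hodge filtration: a $\rho$-basic closed form of Hodge level $\ge k$ extends, after the usual Dolbeault/logarithmic comparison, to a section of $F^k$ of the de Rham complex, and on a compact Kähler (or smooth projective) manifold $F^kH^{2k}$ is intrinsically the image of $H^{2k}(\Omega^{\ge k})$. Therefore $\pi^*\chern_k(\Fcal,\nabla_\rho)\in F^kH^{2k}(\tilde X;\CC)$, and more generally its pull-back to every term of the simplicial resolution lies in $F^k$.

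Then I would descend: cohomological descent identifies $H^{2k}(X;\CC)$ with a subquotient (the degree-$(2k,0)$ part, in the appropriate spectral sequence, of) $H^\pt$ of the simplicial variety, and the Hodge filtration on $H^{2k}(X)$ is by definition induced from the Hodge filtrations on the smooth pieces. Since the class $\chern_k(\Fcal,\nabla_\rho)$ is already known to be $\rho$-basic of Hodge level $\ge k$ on $X$ itself, it is represented compatibly across the whole resolution by forms of Hodge level $\ge k$; combining this with the previous paragraph, its image in each $H^{2k}(\tilde X_p)$ sits in $F^k$, and hence by the definition of the Hodge filtration via descent the class itself lies in $F^kH^{2k}(X;\CC)$. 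The main obstacle, and the place where the hypothesis of a \emph{stratified} resolution (rather than an arbitrary one) does real work, is establishing that $\pi$ can be taken to be a morphism of rigidified stratified varieties whose associated simplicial object both computes the mixed Hodge structure of $X$ \emph{and} carries $\Ascr^\pt_{X,\rho}$-classes to honest $F^k$-classes on the smooth strata — i.e.\ checking that the $\rho$-basic forms pull back to forms which, near the exceptional divisors of $\tilde X$, have at worst logarithmic poles with the correct Hodge type. This is exactly the compatibility between the analytic control data on $(X,\Scal,\rho)$ and the normal-crossing boundary geometry of a toroidal-type resolution, and verifying it is where the bulk of the argument lies; I expect it to be handled by working locally in the model of a torus embedding, where $\rho_S$ is the obvious projection and the $\rho$-basic forms are visibly pulled back along it, so that their pull-backs under the (smooth, stratified) resolution map are logarithmic of the right type by inspection.
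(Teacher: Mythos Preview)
Your proposal eventually lands on the right idea---represent the class by a compatible system of forms of Hodge level $\ge k$ on a simplicial resolution and read off the Hodge level from the definition of $F^\bullet$ via descent---but there are two genuine missteps that would keep the argument from going through as written.

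First, the simplicial resolution is not the one you describe. You write that $\tilde X_\bullet$ arises ``from $\pi$ and its iterated fibre products, all of which are complete nonsingular''. Iterated fibre products of a single resolution $\pi:\tilde X\to X$ are almost never nonsingular, so this hypercovering would not let you compute the Hodge filtration in the elementary way you want. The paper instead builds $X_\bullet$ from the nerve of a small category $\Sscr$ whose objects are connected components of the normalizations $\tilde S^{(l)}$, where $\tilde S\to\overline S$ ranges over the resolutions of the closures of \emph{all} strata $S\in\Scal$ supplied by the stratified resolution. Every object of $\Sscr$ is smooth and complete by construction; this is precisely what the notion of a stratified resolution buys you, and it is lost if you only look at $\tilde X$ and its fibre powers.

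Second, the logarithmic--pole discussion is a detour. Once one has the correct $X_\bullet$, no log comparison is needed: because $\Chern_k(\Fcal,\nabla)$ is $\rho$-basic, it is near each stratum $S$ the $\rho_S$-pull-back of a smooth form on $S$, and the compatibilities (ii), (iii) in the definition of a stratified resolution ensure that this form extends as a \emph{smooth} closed $2k$-form of Hodge level $\ge k$ to every $\tilde S$, and hence to every object $E$ of $\Sscr$, in a way that is strictly compatible with all $\Sscr$-morphisms. That compatible system is then a single cocycle in $F^k sA^\bullet(X_\bullet)$ sitting in simplicial degree $0$, and since the Hodge filtration on $H^\bullet(X;\CC)$ is by definition the one induced from $F^\bullet sA^\bullet(X_\bullet)$, the class lies in $F^kH^{2k}(X;\CC)$. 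Note that this is stronger than (and not implied by) the statement that the images in each $H^{2k}(X_p)$ lie in $F^k$: one needs a cocycle in $F^k$ of the total complex, which is exactly what the smooth, compatible extension provides. Your worry about log poles near the exceptional divisor of $\tilde X$ is thus replaced by the simpler observation that the stratified resolution turns $\rho$-basic forms into honest smooth forms on every piece of $X_\bullet$.
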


\begin{remark}\label{rem:chhodge}
Since the cup product is compatible with the Hodge filtration, it then follows that the corresponding class $\cha_k(\Fcal, \nabla_\rho)$ is also of Hodge level $\ge k$.
\end{remark}

The notion of a \emph{resolution  of a stratified variety} that appears in the formulation of the theorem above expresses the fact that such a variety is equisingular along strata in a rather strong sense. Among other things, it can be shown to imply Whitney's $(a)$ condition.
 We define this notion and prove the theorem in the next subsection.

\subsection*{Stratified resolutions}
We begin with noting that if on a complex manifold $Y$ is given  a normal crossing divisor $D$, then 
$Y$ acquires a natural stratification, where a stratum is a connected component of the locus where for some integer $l\ge 0$ exactly $l$ local branches of $D$ meet. With $D$ given, we will often write  $Y^{(l)}$ for the normalization of the locus where at least $l$ branches of $D$ meet (so 
 that $Y^{(0)}=Y$). This is clearly a complex manifold. If $E$ is a connected component of $Y^{(l)}$, then the locus where $>l$ branches of $D$ meet traces out on $E$ a normal crossing divisor, which is simple when $D$ is and whose normalization is contained in $Y^{(l+1)}$. When $l>0$, then  for the same reason, $E$ naturally maps to a number of connected components of $Y^{(l-1)}$. When $D$ is simple, this number is $l$ and the maps are embeddings. 
 
 Let $(X, \Scal)$ be a stratified analytic variety and assume that  the normalization of $X$ is a homeomorphism. 

\begin{definition}\label{def:stratares}
An \emph{$\Scal$-resolution} of $(X, \Scal)$ consists of giving  
for every stratum $S\in \Scal$ a resolution of its closure, $\pi_S: \tilde S\to \overline S$,  such that 
\begin{enumerate}
\item [(i)] $\pi_S: \tilde S\to \overline S$ is an isomorphism over $S$ and the preimage of $\partial S$ is a simple normal crossing divisor
$D_{\tilde S}$ (so that $\tilde S$ comes with a natural stratification),
\item [(ii)] when $S'< S$,  then $\tilde S[S']:= \overline{\pi_S^{-1}S'}$ is a union of irreducible components of  $D_{\tilde S}$ and  we have a factorization
\[
\pi_{S}: \tilde S[S']\xrightarrow{\pi^{S'}_S}\tilde S'\xrightarrow{\pi_{S'}}\overline S'
\]
that maps every stratum of $\tilde S[S']$ onto a stratum of $\tilde S'$ and 
\item [(iii)] when $S''< S'$, then $\pi^{S''}_{S}\big |\tilde S[S'']\cap  \tilde S[S']$ factors as
\[
\pi^{S''}_{S}: \tilde S[S'']\cap  \tilde S[S']\xrightarrow{\pi^{S'}_S} \tilde S'[S'']\xrightarrow{\pi^{S''}_{S'}}\tilde S''.
\]
\end{enumerate}
\end{definition}
Note that then any stratum $S\in\Scal$ inherits such a structure in the sense that  the collection $\{\pi_{S'}\}_{S'\le S}$ defines a $\Scal |\overline S$-resolution of $\overline S$.

In order to prove Theorem \ref{thm:main} we first show how the above notion gives rise to a simplicial resolution that can be used compute the
cohomology of $X$ and its mixed Hodge structure, when that makes sense. 

Obviously, the collection of $\pi_S: \tilde S\to \overline S$, where  $S\in\Scal$ runs over the open strata, defines a resolution $\pi :\tilde X\to X$ of $X$ whose exceptional set is a normal crossing divisor.  So $X$ 
can be regarded as a quotient space of $\tilde X$ with the identifications taking place over the strata of depth $\ge 1$.  Let $S>S'$  be a pair 
of incident strata whose depths differ by 1. When we regard $\overline S$ as a quotient of $\tilde S$, then the identification over $S'$ is exhibited
by $\pi^{S}_{S'} : \tilde S[S']\to \tilde S'$. In order to let all such identifications 
take place by means of morphisms between smooth varieties, it is best to replace $\tilde S[S']$ by its normalization. 
This means that we should do this for every connected component of this normalization.  It is then wise to remember that these 
connected components are glued to each other in  $\tilde S[S']$.  We may continue this process with any stratum of depth 2 and finally end up with a small category $\Sscr$ of compact complex manifolds over $X$ that  has $X$ as a direct limit in the category of topological spaces. Here is more precise description of $\Sscr$.

An object of $\Sscr$ is a connected component $E$ of $\tilde S^{(l)}$ for some $S\in\Scal$ and some $l\ge 0$. 
So when we regard  $E$ as a subvariety of $\tilde S$,  it is the  closure of a stratum. 
We describe two types of  basic morphisms $E\to E'$ between two such objects and stipulate that these generate the $\Sscr$-morphisms. 
The first one is when $E'$ is obtained from $E$ by forgetting one of the $l$ irreducible components of $D_{\tilde S}$ which contains $E$ 
and $E\to E'$ is the obvious embedding. The other is defined only when $E$ is contained in
$\tilde S[S']$ for some $S'<S$. Then $E':=\pi_S^{S'}(E)$ is  a connected component of $\tilde S'{}^{(l')}$ for some $l'$ and  the resulting map $E\to E'$ is the other type of  basic morphism. 

Now recall that the \emph{nerve} of the small  category $\Sscr$ is a simplicial set whose $n$-simplices are chains of length $n$: $E_\pt=(E_0\to E_1\to\cdots \to E_n)$ in $\Sscr$. We then obtain a simplicial  space $X_\pt$ by taking for $X_n$ the disjoint union of the objects $\first (E_\pt)$ where $E_\pt$ runs over the chains of length $n$ in $\Sscr$ and $\first (E_\pt)$ stands for the first term of such a chain. (So the connected components of $X_n$ are objects of  $\Sscr$, but the indexing is by the $n$-simplices, so that several copies of the same   $\Sscr$-object  may appear.)
Its geometric realization $|X_\pt|$ (which is obtained in a standard fashion as a quotient  of the disjoint union of the products $|\Delta^l|\times X_l$) is a space over $X$ and this structural map is a homotopy equivalence. It can be used for cohomological descent: the face maps $\partial_i: X_n\to X_{n-1}$,  $0\le i\le n$, are used  to define a double cochain complex
\[
C^\pt (X_\pt):  0\to C^\pt(X_0) \to C^\pt(X_1)\to\cdots
\]
and the obvious chain homomorphism from $C^\pt (X)$ to the associated simple complex $sC^\pt (X_\pt)$ induces an isomorphism on (integral) cohomology.
In particular, we have  spectral sequence
\begin{equation}
\label{eq:1}
E_1^{r,s}= H^s(X_r)\Rightarrow H^{r+s}(X). 
\end{equation}
We note that the edge homomorphism $H^\pt(X)\cong H^\pt(sC^\pt (X_\pt))\to H^\pt(X_0)=\oplus_{E\in \Sscr} H^\pt (E)$ is  induced by the obvious map $\sqcup_{E\in\Sscr}E =X_0\to X$. 

Suppose that we are in the algebraic setting so that varieties and  morphisms are complex-algebraic. Then   $H^\pt (X)$ carries a mixed Hodge structure and we can use this construction to identify that structure: the above spectral sequence is one of mixed Hodge structures. This implies that when $X$ is compact, it degenerates at $E_2$ (all higher differentials are zero since their source and target when nonzero have different weight) and yields the weight filtration:
\begin{equation}\label{eq:2}
\gr^W_sH^{r+s}(X;\QQ)=E_2^{r,s}=H\big(H^s\big(X_{r-1}; \QQ)\to H^s(X_r; \QQ)\to H^s(X_{r+1}; \QQ)\big).
\end{equation}
Moreover, if we use $A^\pt(M)$ to denote the $\CC$-valued De Rham complex of a complex algebraic  manifold $M$ and $F^\pt A^\pt(M)$ its Hodge filtration, then the Hodge filtration of $sA^\pt (X_\pt)$ defines the Hodge filtration of $H^\pt(X;\CC)$.

Perhaps the simplest nontrivial example is when $X$ has only two strata: $S$ and $X-S$ and $\pi:\tilde X\to X$ is a resolution  with $\pi^{-1}S$ nonsingular.
Then the complex $C^\pt(X_\pt)$ is just $0\to C^\pt(\tilde X)\oplus C^\pt(S)\to  C^\pt(\pi^{-1}S)\to 0$ and  the associated exact sequence
\[
\cdots \to H^{s-1}(\pi^{-1}S)\to H^{s}(X)\to H^s(\tilde X)\oplus H^s(\pi^{-1}S)\to H^s(\pi^{-1}S)\to\cdots
\]
yields the weight filtration:  $W_{s-2}H^{s}(X)=0$,  $W_{s-1}H^{s}(X)$ is the image of the map $H^{s-1}(\pi^{-1}S; \QQ)\to H^{s}(X;\QQ)$ and  $W_sH^{s}(X)=H^{s}(X;\QQ)$.

\begin{proof}[Proof of Theorem \ref{thm:main}]
For $\nabla$ as constructed in the proof of Proposition \ref{prop:main1}, the Chern form $\Chern_k(\Fcal, \nabla)$ defines a closed $2k$-form on $\tilde S$ for every $S\in\Scal$. This form is of Hodge level $\ge k$. It restricts to a  $2k$-form $\Chern_k(\Fcal_{|E},\nabla_E)$ on every $\Sscr$-object $E$ with the same property 
and for every  $\Sscr_\rho$-morphism  $\phi: E\to E'$ we have $\phi^* \Chern_k(\Fcal_{|E'},\nabla_{E'})= \Chern_k(\Fcal_{|E},\nabla_{E})$. This means that $(\Chern_k(\Fcal_{|E},\nabla_{E}))_E$ defines a cocycle of degree $2k$
in $A^\pt(X_\pt)$ and thus defines a class $\chern_k(\Fcal, \nabla)\in F^kH^{2k}(X)$. 
\end{proof}

\section{A first application to Baily-Borel compactifications}

\subsection*{Review of the Baily-Borel compactification}
Let $\Gcal$ be a connected reductive complex algebraic group that is defined over $\RR$. 
Write $G$ resp.\  $G_\CC$ for $\Gcal (\RR)$ resp.\ $\Gcal (\CC)$ endowed with the Hausdorff topology.  We assume that $G$ has compact center and that the symmetric space $\XX$ of $G$ is endowed with a $G$-invariant complex structure. To say that $\XX$ is the 
symmetric space $\XX$ of $G$ means  that for every $x\in \XX$ the stabilizer $G_x$ is a maximal compact subgroup of $G$ and to say 
that $\XX$ comes with a $G$-invariant complex structure amounts to the property that $G_x$  contains an embedded a copy of the circle group $\Un (1)$ in its center whose action on $T_x\XX$ defines its complex structure (it is a nontrivial action by scalars of unit norm). This makes $\XX$ a bounded symmetric domain.  It appears naturally as an open $G$-orbit in a complex projective manifold $\check{\XX}$, called  the \emph{compact dual of $\XX$}, on which $G_\CC$ acts transitively. It has the property that for $x\in \XX$, the  $G_{\CC}$-stabilizer $G_{\CC, x}$ is simply the  complexification of $G_x$.

We now assume that $\Gcal$ is defined over $\QQ$  and let $\Gamma\subset G(\QQ)$ be an arithmetic subgroup. For what follows the passage to  a subgroup of $\G$ of finite index will be harmless, and so we will assume from the outset that $\G$ is \emph{neat}. This means that for every finite dimensional representation  $\rho: G_\CC\to \GL (n,\CC)$  the subgroup of $\CC^\times$ generated by the eigenvalues  of elements of $\rho (\G)$ has no torsion (actually it suffices to verify this  for just one faithful representation). This  implies that the arithmetically defined  subquotients of $\G$  are torsion free.  The action of $\G$ on $\XX$ is then proper and free so that the orbit space $\XX_\G$ is  complex manifold. The Baily-Borel compactification, which we will presently recall, shows that  $\XX_\G$ has even the structure of nonsingular quasi-projective variety. 

A central role in the Baily-Borel theory is played by the collection $\Pscr_{\max}=\Pscr_{\max}(G)$ of maximal proper parabolic subgroups of $\Gcal$ defined over $\QQ$ and so let us fix some $P\in \Pscr_{\max}$. We review the structure of  $P$ and the way it acts on $\XX$. 
Its unipotent radical  $R_u(P)\subseteq P$ is at most 2-step unipotent: if
 $U_P\subseteq R_u(P)$ denotes its center (a nontrivial vector group), then $V_P:=R_u(P)/U_P$ is also a (possibly trivial) vector group. Adopting the convention to denote the associated Lie algebras by the corresponding Fraktur font, then the  Lie bracket defines an antisymmetric bilinear map $\vfrak_P\times \vfrak_P\to \ufrak_P$. This map is  equivariant with respect to the adjoint  action of $P$ on these vector spaces. Note 
 that $P$ acts on $\ufrak_P$ and $\vfrak_P$  through its Levi quotient $L_P:=P/R_u(P)$.
The reductive group $L_P$ has in $\ufrak_P$ a distinguished  open orbit that is a strictly convex cone $C_P$ having the property that if we exponentiate  $\sqrt{-1}C_P$ to a semigroup in $G_\CC$, then this semigroup leaves $\XX$ invariant (think of the upper half plane in $\CC$ that is invariant under the  semigroup of translations in $\sqrt{-1}\RR_{>0}$). The $G$-stabilizer of  $\ufrak_P$  is $P$ and so  $\ufrak_P$ determines $P$. This gives rise to a partial order $\le$ on $\Pscr_{\max}$ by stipulating that $Q\le P$ in case $\ufrak_Q\subseteq\ufrak_P$ (in $\gfrak$). This last property is equivalent to $C_Q\subseteq \overline C_P$. The other (non-maximal) $\QQ$-parabolic subgroups of $G$ are obtained from chains in $\Pscr_{\max}$:
for a chain $P_0<P_1<\cdots <P_n$ in $\Pscr_{\max}$, $P:=P_0\cap \cdots \cap P_n$ is a $\QQ$-parabolic subgroup. It has the property that its unipotent radical contains the unipotent radical of each $P_i$: $\cup_{i=1}^n R_u(P_i)\subseteq R_u(P)$.

For $P\in \Pscr_{\max}$, the $\QQ$-split center $A_P$ of $L_P$  is isomorphic as such to the multiplicative group (and so $A_P\cong \RR^\times$). It acts on $\vfrak_P$ by a faithful character (multiplication by scalars) and on $\ufrak_P$ by the square of that character (so that it indeed preserves $C_P$).  The \emph{horizontal subgroup} $M^\hor_P\subseteq L_P$ (for some authors  the superscript stands for \emph{hermitian})  is the kernel of the action of $L_P$ on $\ufrak_P$. This is a reductive subgroup defined over $\QQ$ with compact center. 
The  centralizer of $M^\hor_P$ in  $L_P$ is a reductive $\QQ$-subgroup whose commutator subgroup we denote by $M^\ver_P$ (we like to think that the symbol $\ell$ should refer to \emph{link} rather than \emph{linear}---the explanation for this  terminology will become clear below).
This group acts in such a manner on the real projectivization of $C_P$ (in the projective space of $\ufrak_P$) that the latter is the symmetric space of  $M^\ver_P$. So we may regard $C_P$ as the symmetric space of $L^\ver_P:=M^\ver_P.A_P$. The group $L^\ver_P$ supplements  $M^\hor_P$ in $L_P$ up to a finite  central subgroup. We denote the preimage of $L^\ver_P$ in $P$ by $P^\ver$.  

The action of $P$ on $\XX$ is still transitive. Important for what follows is that the formation of the $P^\ver$-orbit space of $\XX$ remains in the holomorphic category: it defines a holomorphic submersion of complex manifolds $\XX\to \XX(P)$, with $\XX(P)$ appearing as 
the symmetric domain of $M^\hor_P$. This is called a \emph{rational boundary component} of $\XX$.  The $P^\ver$-orbits in  $\XX$
(so the fibers of   $\XX\to \XX(P)$) are also orbits of the 
semi-subgroup $R_u(P)+\exp (\sqrt {-1}C_P)\subseteq G_\CC$ in $\check{\XX}$ and  this description is essentially an abstract way of a realizing $\XX$ as a Siegel domain of the third kind. To be precise, we have a natural factorization
of $\rho_P: \XX\to\XX(P)$:
\begin{equation}\label{eq:3}
\begin{CD}
\rho_P: \XX@>{\rho'_P}>> \XX(P)'@>{\rho''_P}>>  \XX(P), 
\end{CD}
\end{equation}
where is the first map $\rho'_P$ is a bundle of tube domains (a `torsor' over $ \XX(P)'$ for the semigroup $\exp(\ufrak+\sqrt {-1}C_P))$) and the second map
$\rho''_P$ is a principal bundle of the vector group $V_P=\exp (\vfrak_P)$ (so a bundle of affine spaces). The latter has also the structure of a complex affine space bundle, but beware that  this complex structure on a fibre (which can be given  as a complex structure on its translation space $\vfrak$) will in general vary with the base point. The map $M^\hor_P\to L_P/L_P^\ver=P/P^\ver$ is an isogeny: it is onto and has finite  kernel. We write $G_P$ for $P/P^\ver$.  The action of $M^\hor_P$ on $\XX(P)$ is through this quotient and we prefer to regard  $\XX(P)$ as the symmetric space of  the quotient $G_P$ of $P$ rather than of  the subquotient $M^\hor_P$  of $P$ (see the example of the symplectic group below). 

Every  $Q\in \Pscr_{\max}(G)$ with $Q>P$ has by definition  the property that $\ufrak_Q\supset \ufrak_P$. But it is then even true that $Q^\ver\supset P^\ver$ and so the projection $\rho_Q: \XX\to \XX(Q)$ factors through $\rho_P: \XX\to \XX(P)$ via a morphism that we shall denote by $\rho_{Q_{/P}}:  \XX(P)\to \XX(Q)$. The latter can be understood as the formation of a rational boundary component of $\XX(P)$. Indeed, $Q$
defines a maximal proper parabolic subgroup of $G_P$, namely the image of $Q\cap L_P$ in $L_P/L_P^\ver=G_P$
(that we shall denote by $Q_{/P}$). This identifies $\Pscr_{\max}(G_P)$ as a  partially ordered  set with $\Pscr_{\max}(G)_{>P}$. The unipotent radical 
of $Q_{/P}$ is the image of $R_u(P)\cap R_u(Q)$ in $Q_{/P}$. Its center $U_{Q_{/P}}$ is the image of $U_Q$, $U_Q/U_Q\cap R_u(P)$. Similarly, the cone $C_{Q_{/P}}\subseteq \ufrak_{Q_{/P}}$ is the image of $C_Q\subseteq \ufrak_Q$ under the projection $\ufrak_Q\to \ufrak_Q/ \ufrak_Q\cap R_u(\pfrak)\cong \ufrak_{Q_{/P}}$:
\[
\begin{CD}
\ufrak_P \subseteq \ufrak_Q@>>> \ufrak_Q/ \ufrak_Q\cap R_u(\pfrak)\cong \ufrak_{Q_{/P}}\\
\cup &&\cup\\
C_P\le C_Q@>>> C_{Q_{/P}}.
\end{CD}
\] 
We define the \emph{Satake extension} of $\XX$ as a ringed space. As a set it is the disjoint union
\[
\XX^\bb:=\XX\sqcup \bigsqcup_{P\in\Pscr_{\max}}\XX(P).
\]
It is  endowed with the \emph{horocyclic topology}: the topology
generated by the open subsets of $\XX$ and the subsets $\Omega^{\bb_P}$, where  $P\in\Pscr_{\max}$ and  $\Omega\subseteq \XX$ is open and  invariant under both the semigroup $\sqrt{-1}C_P$ and the group  $\G\cap P^\ver$, and 
\[
\Omega^{\bb_P}:=\Omega \sqcup \bigsqcup_{Q\in\Pscr_{\max}; Q\le P} \rho_Q(\Omega).
\]
Since $\G\cap R_u(P)$ is cocompact in $R_u(P)$, we may replace here invariance under $\G\cap P^\ver$ by  invariance under $R_u(P).(\G\cap P^\ver)$ (but not in general by  invariance under $P^\ver$). Yet this topology is independent of $\G$: it only depends on  the $\QQ$-structure on $\Gcal$. This construction is natural in the sense that  the closure of any rational boundary component in  $\XX^\bb$ can be identified with its Satake extension. The  structure sheaf $\Ocal_{\XX^\bb}$ is the sheaf of complex-valued continuous functions that are holomorphic on every stratum $\XX(P)$. It is clear that $\G$ acts on this ringed space. The main theorem of Baily-Borel asserts among other things that the orbit space $(\XX^\bb_\G,\Ocal_{\XX^\bb_\G})$ is as a ringed space a normal compact analytic space that underlies the structure of normal projective variety and by a theorem of Chow, this projective structure is then unique. Moreover, the 
decomposition of $\XX^\bb$ into $\XX$ and its rational boundary components defines a decomposition  of $\XX^\bb_\G$ into nonsingular subvarieties (strata) such that the closure of any of these is a union thereof.  Any stratum is of the same type as $\XX_\G$: it is of the form $\XX(P)_{\G(G_P)}$ and hence has its own  Baily-Borel compactification. The preceding shows that the Baily-Borel compactification of a stratum maps  homeomorphically onto 
its closure in $\XX_\G^\bb$. This map is also morphism of varieties (that could be an isomorphism, but it is conceivable that this closure is not normal). This shows among other things:

\begin{corollary}\label{cor:}
The retractions $\{\rho_P: \XX\to \XX(P)\}_{P\in\Pscr_{\max}}$ endow the Baily-Borel stratification of  $\XX_\G^\bb$
with a natural system $\rho_\G^\bb$ of retractions, thus making it a rigidified stratified space.
\end{corollary}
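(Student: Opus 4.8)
The plan is to produce the system $\rho_\G^\bb=(\rho_S)_S$ by descending the Baily--Borel retractions $\rho_P\colon\XX\to\XX(P)$ to the quotient $\XX_\G^\bb$. As a first step I would record how $\rho_P$ behaves near the boundary. Taking $\Omega=\XX$ in the definition of $\Omega^{\bb_P}$ gives the open subset $\XX^{\bb_P}:=\XX\sqcup\bigsqcup_{Q\le P}\XX(Q)$ of $\XX^\bb$ (open, because $\XX$ is open and invariant under $\sqrt{-1}C_P$ and $\G\cap P^\ver$), and the submersion $\rho_P$ extends over $\XX^{\bb_P}$ to a retraction onto $\XX(P)$ whose restriction to the boundary component $\XX(Q)$, for $Q<P$, is the morphism $\rho_{P_{/Q}}\colon\XX(Q)\to\XX(P)$ exhibiting $\XX(P)$ as a rational boundary component of $\XX(Q)$. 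Since each $\rho_{P_{/Q}}$ is a holomorphic submersion and the extension is continuous, it is a morphism of ringed spaces from $\XX^{\bb_P}$ (with the restriction of $\Ocal_{\XX^\bb}$) to $(\XX(P),\Ocal_{\XX(P)})$; and as $P$ is the $G$-stabilizer of the boundary component $\XX(P)$, this morphism is equivariant for $\G\cap P$, which acts on $\XX$ in the usual way and on $\XX(P)$ through its image $\G(G_P)$ in $G_P=P/P^\ver$.

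Next I would descend. Choose a set of representatives $P$ for the finitely many $\G$-conjugacy classes in $\Pscr_{\max}$, and for each a $\sqrt{-1}C_P$- and $(\G\cap P)$-invariant open $\Omega\subseteq\XX$ with $\rho_P(\Omega)=\XX(P)$ that is small enough that the quotient map $\XX^\bb\to\XX_\G^\bb$ restricts to an open embedding $\Omega^{\bb_P}/(\G\cap P)\hookrightarrow\XX_\G^\bb$ onto an open neighbourhood $U_{[P]}$ of the stratum $S_{[P]}:=\XX(P)_{\G(G_P)}$; such $\Omega$ are furnished by the Baily--Borel construction, and the corresponding $U_{[P]}$ form a neighbourhood basis of $S_{[P]}$. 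Restricting the $(\G\cap P)$-equivariant morphism $\rho_P$ to $\Omega^{\bb_P}$ and passing to the quotient gives a morphism $\rho_{S_{[P]}}\colon U_{[P]}\to\XX(P)/(\G\cap P)=S_{[P]}$, which is a retraction onto $S_{[P]}$. Two choices of $\Omega$ produce retractions that agree on the dense open $\XX_\G$, hence agree (the maps are continuous and $\XX_\G^\bb$ is topologically normal), so the germ $\rho_{S_{[P]}}$ is well defined; it is independent of the chosen representative $P$ because conjugating $P$ conjugates $\rho_P$. For the open stratum we put $\rho_{\XX_\G}:=\mathrm{id}$.

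It remains to verify the commutation relation $\rho_{S'}\rho_S=\rho_{S'}$ for $S'<S$; the only nonformal case is $S=S_{[P]}$, $S'=S_{[Q]}$, which after conjugating we may assume satisfies $Q>P$ and $\XX(Q)\subseteq\overline{\XX(P)}$. Near $\XX(Q)$ in $\XX^\bb$ the retraction $\rho_{S_{[P]}}$ is represented by $\rho_P$, which is defined on $\XX$ and on the strata $\XX(R)$ with $R\le P$ and has image $\XX(P)$, while $\rho_{S_{[Q]}}$ is represented by $\rho_Q$, whose restriction to $\XX(P)$ equals $\rho_{Q_{/P}}$ since $P<Q$. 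The factorization $\rho_Q=\rho_{Q_{/P}}\circ\rho_P$ recorded in the text therefore gives $\rho_Q\circ\rho_P=\rho_Q$ on $\XX$, and hence, by density of $\XX$ in $\XX^\bb$ and continuity, on the whole common domain of the two maps near $\XX(Q)$; dividing out by $\G$ yields $\rho_{S_{[Q]}}\circ\rho_{S_{[P]}}=\rho_{S_{[Q]}}$ on the germ at $S_{[Q]}$. Together with the facts that $\XX_\G^\bb$ is an irreducible, normal, compact variety and that its Baily--Borel decomposition is a finite partition into connected nonsingular locally closed subvarieties closed under taking closures, this shows that the triple consisting of $\XX_\G^\bb$, its Baily--Borel stratification, and $\rho_\G^\bb$ is a rigidified stratified space.

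I expect the real work to lie in the first two steps, that is, in extracting from the Baily--Borel construction both that $\rho_P$ is a morphism for the structure sheaf $\Ocal_{\XX^\bb}$ right up to the lower boundary strata $\XX(Q)$ ($Q<P$) and that the horocyclic topology admits neighbourhood bases of the form $\Omega^{\bb_P}/(\G\cap P)$ around the strata with $\Omega$ small enough for the displayed embedding to hold. The commutation relation, by contrast, is a purely formal consequence of the factorizations $\rho_Q=\rho_{Q_{/P}}\rho_P$ and the density of $\XX$ in $\XX^\bb$.
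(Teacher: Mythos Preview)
Your proposal is correct and is precisely the argument the paper has in mind: the corollary is stated there without proof, immediately after the factorization $\rho_Q=\rho_{Q_{/P}}\circ\rho_P$ (for $Q>P$) and the description of the horocyclic topology, with the words ``This shows among other things''. Your write-up simply makes explicit the two steps the paper leaves to the reader---extending $\rho_P$ continuously over the open star $\XX^{\bb_P}$ and descending by $\G\cap P$---and then reads off the commutation relation from the factorization, which is exactly how the corollary is meant to follow.
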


\subsection*{Satake extension  of automorphic bundles}
Let $\Fcal$ be  an \emph{automorphic vector bundle} on $\XX$, that is, a complex vector bundle on $\XX$ endowed with a $G$-action lifting the one on $\XX$ in such a manner that for some (and hence for any) $x\in \XX$ the copy of  $\Un (1)$ in the stabilizer $G_x$ acts also complex  linearly on the fiber $\Fcal (x)$. Such a vector bundle is completely given by the action of $G_x$ on the complex vector space $\Fcal (x)$ and conversely, any finite dimensional complex representation of $G_x$ defines such a vector bundle. The bundle  $\Fcal$ 
 with  its $G$-action extends to the compact dual $\check{\XX}$ as a vector bundle with $G_\CC$-action and this extension (which we denote by 
 $\check\Fcal$) is unique. This is because the $G_x$-action on the complex vector space $\Fcal (x)$ extends 
to one of the complexification $G_{x, \CC}$ of $G_x$, and $G_{x, \CC}$ is just the $G_\CC$-stabilizer of $x$.  Since  the  
$G_\CC$-bundle $\check\Fcal$ is defined in the holomorphic category, it follows that $\Fcal$ comes with a  $G$-invariant holomorphic structure. 

Given $x\in \XX$, the compactness of $G_x$ implies  that $G_x$ leaves invariant an inner product in the fiber $\Fcal (x)$. This inner product then extends in a unique manner to a $G$-invariant inner product $h$ on $\Fcal$.  As is well-known, we then have a unique hermitian connection $\nabla$ on $(\Fcal, h)$ whose $(0,1)$-part is zero on local holomorphic sections. This connection  is of course also $G$-invariant. It is in fact independent of $h$. This is clear when $\Fcal (x)$ is irreducible as a representation of $G_x$, for then the inner product is unique up to  scalar and the general case then follows from this by decomposing  $\Fcal (x)$ into irreducible  subrepresentations. 
So we have canonically associated Chern forms $\Chern_n(\Fcal)=\Chern_n(\Fcal,\nabla)$ on $\XX$. Such a form is harmonic relative to a $G$-invariant metric on $\XX$, is $G$-invariant and of Hodge bidegree $(n,n)$. 
The $G$-equivariance allows us to descend all of this to $\XX_\G$, so that we get a holomorphic bundle $\Fcal_\G$ with connection on $\XX_\G$ whose Chern forms pull back to the ones of $(\Fcal, \nabla)$. The $G$-invariant connection $\nabla$ will in general not extend to $\check\Fcal$ and neither will the associated Chern forms.

\begin{lemma}\label{lemma:bbflatness}
The action of the semigroup $R_u(P)\exp({\sqrt{-1}C_P})$ on $\XX$  defines a natural flat $\rho_P$-connection on $\Fcal$. This identifies 
$\Fcal$ with the $\rho_P$-pull-back of a vector bundle $\Fcal(P)$ on $\XX(P)$ with $P/R_u(P)$-action (lifting the obvious $P/R_u(P)$-action on
$\XX(P)$). In particular,  $\Fcal(P)$ is automorphic relative the  $M_P^\hor$-action on $\XX(P)$ with $L_P^\ver$ acting   (possibly nontrivially) as a group of bundle automorphisms over $\XX(P)$. 

Finally, for any chain $P<P_1<\cdots <P_n$ in $\Pscr_{\max}$, the
flat $\rho_{P_k}$-connection on $\Fcal$, when regarded as a  $\rho_{P}$-connection, differs from the flat $\rho_{P}$-connection by a differential that takes its values in 
the nilpotent Lie algebra  $R_u(\pfrak\cap\pfrak_1\cap\dots \cap \pfrak_n)$.
\end{lemma}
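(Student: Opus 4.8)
The plan is to pass to the compact dual $\check\XX$ and to argue $G_\CC$-equivariantly throughout. Since $\Fcal$, the Baily--Borel stratification and the retractions $\rho_P$ are all $G$-equivariant, it suffices to carry out the construction for $\Fcal$ over $\XX$ and then descend it to $\Fcal_\G$ on $\XX_\G$ and to the Baily--Borel extension. Recall that $\Fcal$ extends uniquely to a $G_\CC$-equivariant holomorphic bundle $\check\Fcal$ on $\check\XX$. Fix $x_0\in\XX$ and write $Q:=G_{\CC,x_0}$ for its isotropy group --- a parabolic subgroup of $G_\CC$ whose Levi quotient is $(G_{x_0})_\CC$ --- so that automorphy of $\Fcal$ amounts to the $Q$-representation on $\Fcal(x_0)$ being inflated from $(G_{x_0})_\CC$, i.e.\ $R_u(Q)$ acting trivially on $\Fcal(x_0)$. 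The geometric input we use is the Siegel--domain description of $\XX$: for $P\in\Pscr_{\max}$ the fibre of $\rho_P:\XX\to\XX(P)$ through $x_0$ is the orbit of the semigroup $W_P=R_u(P)\exp(\sqrt{-1}C_P)$, on which $W_P$ acts freely and transitively; moreover $W_P$ lies in the complex unipotent group $R_u(P)_\CC\subseteq G_\CC$ (the complexification of $R_u(P)$), which acts on $(\check\XX,\check\Fcal)$.

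For the first assertion I would restrict $\check\Fcal$ to the orbit $O:=R_u(P)_\CC\cdot x_0$. Its stabilizer $R_u(P)_\CC\cap Q$ is a unipotent subgroup of $Q$ that acts trivially on $\Fcal(x_0)$ --- a standard feature of the Siegel--domain structure and of automorphic bundles, and exactly what makes $\Fcal$ extend to $\check\XX$ --- so the $R_u(P)_\CC$-equivariant bundle $\check\Fcal|_O$ is canonically trivial, the trivialization being holomorphic because the orbit map identifies $O$ biholomorphically with $R_u(P)_\CC/(R_u(P)_\CC\cap Q)$. As $\rho_P$ is a holomorphic submersion the fibre $F:=\rho_P^{-1}(y)$ is a complex submanifold of $\XX$, contained in $O$ because it is the $W_P$-orbit of $x_0\in O$; restricting the trivialization of $\check\Fcal|_O$ to $F$ and calling its constant sections flat gives a flat holomorphic connection along $F$, and two base points of $F$ differ by an element of $R_u(P)_\CC$ --- which changes the trivialization only by a constant isomorphism --- so letting $y$ vary produces a well-defined flat $\rho_P$-connection $\nabla_{\rho_P}$ on $\Fcal$. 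By \eqref{eq:3} each $F$ is a bundle of tube domains over a complex affine space, hence simply connected, so $\nabla_{\rho_P}$ has trivial relative holonomy; by the dictionary recalled after Definition \ref{def:basic1}, the subsheaf $\FF\subseteq\Fcal$ of $\nabla_{\rho_P}$-flat local sections --- locally free over $\rho_P^{-1}\Ocal_{\XX(P)}$ with $\Ocal_\XX\otimes\FF\xrightarrow{\sim}\Fcal$ --- is, by the vanishing of that holonomy, the $\rho_P$-inverse image of the sheaf of sections of a holomorphic vector bundle $\Fcal(P)$ on $\XX(P)$, whence $\Fcal\cong\rho_P^*\Fcal(P)$. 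Concretely $\Fcal(P)$ is the restriction to $\XX(P)$ of the $G_{P,\CC}$-equivariant bundle $\check\Fcal|_{\check\XX(P)}$, for the Borel embedding $\check\XX(P)\hookrightarrow\check\XX$ of the compact dual of the boundary component. Isoholonomy is then immediate: over any open $V\subseteq\XX(P)$ small enough that $\Fcal(P)|_V$ is trivial, $\Fcal|_{\rho_P^{-1}V}$ is trivial and so carries a flat holomorphic connection lifting $\nabla_{\rho_P}$.

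For the automorphy statement I would use that $\rho_P$ is $P$-equivariant, with $P$ acting on $\XX(P)$ through $P\twoheadrightarrow L_P=P/R_u(P)$, and that $P$ normalizes $R_u(P)_\CC$ and preserves $C_P$, hence normalizes $W_P$; thus the construction of $\nabla_{\rho_P}$ is $P$-equivariant and descends to an $L_P$-action on $\Fcal(P)$ covering the $L_P$-action on $\XX(P)$. Since $\XX(P)$ is the symmetric space of $M^\hor_P$, to which $L_P$ maps with kernel containing $L^\ver_P$ up to finite index, $L^\ver_P$ acts on $\Fcal(P)$ through bundle automorphisms over $\XX(P)$. Finally $\Fcal(P)(y)\cong\check\Fcal(\tilde y)$ for the point $\tilde y\in\check\XX(P)\subseteq\check\XX$ corresponding to $y$, and the circle $\Un(1)$ in the $M^\hor_P$-stabilizer of $y$ that defines the complex structure of $T_y\XX(P)$ is, through the chain of isotropy subgroups in $G_\CC$, conjugate to a subcircle of the $\Un(1)$ in $G_{x_0}$; since the latter acts $\CC$-linearly on $\check\Fcal$, so does the former on $\Fcal(P)(y)$, that is, $\Fcal(P)$ is automorphic for $M^\hor_P$. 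Getting this matching of circles right is precisely the subtlety noted in the introduction.

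For the last assertion, fix a chain $P<P_1<\cdots<P_n$ in $\Pscr_{\max}$, set $\mathfrak n:=R_u(\pfrak\cap\pfrak_1\cap\cdots\cap\pfrak_n)$ and let $N$ be the corresponding complex unipotent group $R_u(P\cap P_1\cap\cdots\cap P_n)_\CC$. Because $P\cap P_1\cap\cdots\cap P_n$ is a parabolic contained in $P$ and in every $P_k$, its unipotent radical $N$ contains $R_u(P)_\CC$ and every $R_u(P_k)_\CC$. Since $P_k>P$, $\rho_{P_k}$ factors through $\rho_P$, so the $\rho_P$-fibre $F$ through $x_0$ lies in the $\rho_{P_k}$-fibre $F'$ through $x_0$, and restricting $\nabla_{\rho_{P_k}}$ along the fibres of $\rho_P$ gives a flat $\rho_P$-connection, which I write $\nabla_{\rho_P}+\eta^{P_k}_P$. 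On $F$ the $\nabla_{\rho_P}$-flat trivialization sends $v\in\Fcal(x)$ to $w(x)^{-1}v$ for the element $w(x)\in R_u(P)_\CC$ carrying $x_0$ to $x$, while the $\nabla_{\rho_{P_k}}$-flat trivialization sends it to $w_k(x)^{-1}v$ for $w_k(x)\in R_u(P_k)_\CC$ carrying $x_0$ to $x$; these two trivializations differ by the transition function $g(x):=w(x)^{-1}w_k(x)$, which takes values in $N\cap Q$ --- both $w(x)$ and $w_k(x)$ lie in $N$, and two elements of $N$ taking $x_0$ to the same point differ by an element of $Q$ --- and equals the identity at $x_0$. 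Hence $\eta^{P_k}_P$ is, up to sign, the image under the infinitesimal $G_\CC$-action on $\check\Fcal$ of the relative logarithmic derivative $g^{-1}d_{\rho_P}g$, a relative $1$-form with values in the Lie algebra of $N\cap Q$, i.e.\ in $\mathfrak n\cap\qfrak\subseteq\mathfrak n$. Now $\mathfrak n$, being the nilpotent radical of a parabolic subalgebra of $\gfrak$, lies in the derived subalgebra and consists of $\operatorname{ad}$-nilpotent elements, which therefore act by nilpotent endomorphisms in every finite-dimensional representation (Jordan decomposition); so $\eta^{P_k}_P$ is a nilpotent relative differential taking its values in the action of $R_u(\pfrak\cap\pfrak_1\cap\cdots\cap\pfrak_n)$ on $\Fcal$, and since $\eta^{P_1}_P,\dots,\eta^{P_n}_P$ all take their endomorphism values in the single Lie algebra of nilpotent operators that is the image of $\mathfrak n$ in $\mathcal{E}\!nd(\Fcal)$ (simultaneously strictly triangularizable by Engel's theorem), every complex-linear combination of them is again a nilpotent relative differential. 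This yields the compatibility of Definition \ref{def:basic3}, so that $(\nabla_{\rho_S})_S$ is an isoholonomic flat $\rho$-connection. The main obstacle is this last step --- establishing that the discrepancy between the $W_P$- and $W_{P_k}$-trivializations is a relative differential valued in $R_u(\pfrak\cap\pfrak_1\cap\cdots\cap\pfrak_n)$ --- which turns on the containments $R_u(P)_\CC,R_u(P_k)_\CC\subseteq N$ and on the elementary remark about isotropy just used; a secondary delicate point, in the first step, is that although $W_P$ is merely a real semigroup inside the complex group $R_u(P)_\CC$ the resulting fibrewise trivialization is holomorphic, and that $R_u(P)_\CC\cap Q$ acts trivially on $\Fcal(x_0)$ --- the one place where automorphy of $\Fcal$ is used in an essential way. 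Everything else is bookkeeping with the structure theory of parabolic subgroups reviewed above.
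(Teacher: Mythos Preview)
Your approach via the compact dual and the complex group $R_u(P)_\CC$ is appealing, but the key step fails: the claim that $R_u(P)_\CC\cap Q$ acts trivially on $\Fcal(x_0)$ is \emph{false} in general. Take $G=\Sp_4(\RR)$, $P=P_I$ the stabilizer of a line $I=\RR e_1$ (so $V_P\ne 0$), and $\Fcal$ the Hodge bundle. With the symplectic basis satisfying $\la e_1,e_3\ra=\la e_2,e_4\ra=1$ and $x_0=[F_0]$ with $F_0=\operatorname{span}(e_1+ie_3,\,e_2+ie_4)$, one computes that $R_u(P_I)_\CC$ is parametrized by $(a,b,c)\in\CC^3$ acting by $e_2\mapsto e_2+ae_1$, $e_4\mapsto e_4+be_1$, $e_3\mapsto e_3+ce_1+be_2-ae_4$. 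The stabilizer of $[F_0]$ is the one-parameter group $\{(a,ia,0):a\in\CC\}$, and this acts on $F_0$ by $(e_1+ie_3)\mapsto(e_1+ie_3)-a(e_2+ie_4)$, which is a nontrivial unipotent for $a\ne 0$. So $\check\Fcal|_O$ is \emph{not} canonically trivial as an $R_u(P)_\CC$-equivariant bundle, and your holomorphic trivialization of $\Fcal|_F$ collapses. The reason the unipotence of the stabilizer does not help is exactly the one you worried about: a unipotent subgroup of the parabolic $Q$ need not lie in $R_u(Q)$.

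This is precisely why the paper does \emph{not} pass to $R_u(P)_\CC$. Instead it uses the factorization $\rho_P=\rho''_P\circ\rho'_P$: along the fibers of $\rho'_P$ one invokes only the action of the \emph{central} complex group $U_P(\CC)$, which genuinely acts freely (in the example above, $U_I(\CC)=\{(0,0,c)\}$ and $c=0$ is forced), so the holomorphic trivialization goes through; along the fibers of $\rho''_P$ (the $V_P$-torsor with its varying complex structure) the paper argues via local holomorphic sections rather than via a complex group. Your argument can be repaired by following this two-step route; the remainder of your write-up --- the $P$-equivariance yielding the $L_P$-action on $\Fcal(P)$, and the explicit transition-function computation showing $\eta^{P_k}_P$ takes values in $R_u(\pfrak\cap\pfrak_1\cap\cdots\cap\pfrak_n)$ --- is correct and in fact spells out what the paper states in one sentence.
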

\begin{proof}
Recall that the morphism $\rho_P: \XX\to \XX(P)$ is a principal bundle for the semisubgroup $R_u(P)\exp({\sqrt{-1}C_P})$ of $G_\CC$ and so  the restriction of $\Fcal$ is canonically trivialized as a complex vector bundle along the fibers of $\rho_P$. This trivialization can be made holomorphic. We show this by means of the factorization (\ref{eq:3}): the fibers of the first factor $\rho'_P: \XX\to \XX(P)'$ are orbits of the semisubgroup $\exp(\ufrak_P +\sqrt{-1}C_P)\subseteq U_P(\CC)$ and so over such orbits we get a holomorphic trivialization: we end up with a vector bundle 
$\Fcal'$ on $\XX(P)'$ such that $\Fcal$ is identified with the pull-back of $\Fcal'$ along $\rho_P'$.

The second factor $\rho''_P: \XX(P)'\to \XX(P)$ is a torsor for the vector group $V_P=\exp(\vfrak_P)$. Thus the trivial vector bundle over $\XX(P)$ with fiber $V_P$ has a holomorphic structure  yielding a holomorphic vector bundle over $\XX(P)$ (whose total space we denote by $\VV_P$) such that for every holomorphic local section $\sigma$ of $\XX(P)'\to \XX(P)$  with domain $N$, the map  $\VV_{P}|_N\to \XX(P)'|_{N}$, $v_z\mapsto v_z+\sigma (z)$ is biholomorphic. Any  trivialization of $\VV_P|_N$ then yields a holomorphic trivialization of 
$\XX(P)'|_N\to N$  that gets covered by a trivialization of $\Fcal'$ over $\XX(P)'|_N$. This defines the natural flat $\rho_P$-connection on $\Fcal$.

It is clear that the flat $\rho_{P}$-connection induced by $P_k$ differs by the one defined by $P$ by a differential that takes values in the complexification of
$R_u(\pfrak)+R_u(\pfrak_k)$. But this last space is contained in the nilpotent Lie algebra $R_u(\pfrak\cap\pfrak_1\cap\dots \cap \pfrak_n)$. 
\end{proof}

The quotient of $\Fcal$ by the $\G$-action gives a holomorphic vector bundle $\Fcal_\G$ on $\XX_\G$. The following corollary somewhat sharpens  the main result of \cite{gp}.

\begin{cordef}\label{cor:bbflatstructure}
The bundle $\Fcal_\G$ on $\XX_\G$ admits a natural flat $\rho_\G^\bb$-connection $\nabla_{\rho_\G^\bb}$. This structure is isoholonomic so that  we have defined the \emph{Goresky-Pardon Chern class lift} $\chern_k^\GP(\Fcal_\G):= \chern_k(\Fcal_\G,\nabla_{\rho_\G^\bb})\in H^{2k}(\XX^\bb_\G; \RR)$, $k=0,1,\dots$.
The restriction of such a class to the closure of a stratum is of the same type (it is the Chern class of an automorphic bundle on that stratum).
\end{cordef}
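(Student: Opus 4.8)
The plan is to deduce the statement from Lemma~\ref{lemma:bbflatness} and Proposition~\ref{prop:main1}; the real work is to transfer the former from $\XX$ to $\XX_\G$ and to verify the hypotheses of the latter. First I would descend the structure of Lemma~\ref{lemma:bbflatness}. For each $P\in\Pscr_{\max}$ that lemma equips $\Fcal$ with a flat $\rho_P$-connection $\nabla_{\rho_P}$, and its construction --- read off from the semigroup $R_u(P)\exp(\sqrt{-1}C_P)$ acting along the fibres of $\rho_P$, via the factorization~\eqref{eq:3} --- is intertwined by the $G$-action. Hence for $\gamma\in\G$ the flat $\rho_{\gamma P\gamma^{-1}}$-connection is carried to $\nabla_{\rho_P}$, and the family $(\nabla_{\rho_P})_P$ descends to a rule assigning to each Baily--Borel stratum $S=\XX(P)_{\G(G_P)}$ of $\XX^\bb_\G$ a flat $\rho_S$-connection $\nabla_{\rho_S}$ on $i_S^{-1}j_*\Fcal_\G$; this is the candidate $\nabla_{\rho_\G^\bb}$.

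Second, I would check that $\nabla_{\rho_\G^\bb}$ is a flat $\rho_\G^\bb$-connection in the sense of Definition~\ref{def:basic3}, i.e.\ that compatibility up to nilpotents holds. For a chain $S>S_1>\cdots>S_n$ arising from a chain $P<P_1<\cdots<P_n$ in $\Pscr_{\max}$, the last clause of Lemma~\ref{lemma:bbflatness} is exactly the assertion that each $\eta^{S_i}_S$ is a relative differential valued in the nilpotent Lie algebra $R_u(\pfrak\cap\pfrak_1\cap\cdots\cap\pfrak_n)=\Lie\big(R_u(P\cap P_1\cap\cdots\cap P_n)\big)$. Since the unipotent group $R_u(P\cap P_1\cap\cdots\cap P_n)$ acts on the fibres of $\Fcal$, Kolchin's theorem supplies a common full flag preserved by it; with respect to that flag the image of its Lie algebra in $\mathcal{E}\!nd(\Fcal)$, hence the complex span of the $\eta^{S_i}_S$, is strictly upper triangular and in particular consists of nilpotent relative differentials. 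That is precisely what Definition~\ref{def:basic3} demands.

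Third comes isoholonomy. On $\XX$, Lemma~\ref{lemma:bbflatness} identifies $\Fcal$ with the $\rho_P$-pull-back of the bundle $\Fcal(P)$ on $\XX(P)$, so there $\nabla_{\rho_P}$ is the fibrewise part of an honestly flat (indeed fibrewise trivial) connection; shrinking to an open $V\subseteq S$ which lifts to $\XX(P)$, the connection $\nabla_{\rho_S}$ over $\mathring\rho_S^{-1}V$ then lifts to the flat holomorphic connection obtained by pulling a flat connection on $\Fcal(P)$ over the lift of $V$ back along $\rho_P$ and descending. Doing this for all $S$ yields isoholonomy. \textbf{I expect this descent to be the main obstacle:} one must make precise that the local flat lifts survive passage to the quotient by the relevant arithmetic subgroups and glue to the asserted structure on $\Fcal_\G$, and this is precisely the subtlety concerning partial flat structures on automorphic bundles mentioned in the introduction. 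It should be handled by choosing the neighbourhoods $U_S$ inside fundamental sets for the action of $R_u(P).(\G\cap P^\ver)$ on the fibres of $\rho_P$.

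Granting the three points above, Proposition~\ref{prop:main1} applies and yields the lift $\chern_k^\GP(\Fcal_\G):=\chern_k(\Fcal_\G,\nabla_{\rho_\G^\bb})\in H^{2k}(\XX^\bb_\G;\CC)$ together with its functoriality. That it lands in real cohomology follows from Corollary~\ref{cor:real}, the group $\Gcal$, the parabolics $P$, the compact dual $\check\XX$, the retraction system $\rho_\G^\bb$ and the automorphic bundle $\Fcal$ all being defined over $\RR$ (indeed over $\QQ$). Finally, for the restriction to the closure of a stratum $S=\XX(P)_{\G(G_P)}$: the closure $\overline S$ is homeomorphic, via a morphism of varieties, to the Baily--Borel compactification $\XX(P)^\bb_{\G(G_P)}$, and since $i_{\overline S}^{-1}\Ascr^\pt_{\XX^\bb_\G,\rho_\G^\bb}=\Ascr^\pt_{\overline S,\rho_\G^\bb|\overline S}$, the $\rho_\G^\bb$-basic Chern--Weil form $\Chern_k(\Fcal_\G,\nabla)$ representing $\chern_k^\GP(\Fcal_\G)$ restricts on $\overline S$ to a $(\rho_\G^\bb|\overline S)$-basic closed form of Hodge level $\ge k$. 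Choosing the covering and partition of unity in the proof of Proposition~\ref{prop:main1} compatibly with $\overline S$ --- legitimate because the strata $S'\le S$ together with their retractions form such a system on $\overline S$ --- this restricted form is of the type built there for the automorphic bundle on $S$ obtained by descending $\Fcal(P)$, with its descended flat connection. Hence $\chern_k^\GP(\Fcal_\G)|_{\overline S}$ is the Goresky--Pardon Chern class of an automorphic bundle on $S$, as asserted.
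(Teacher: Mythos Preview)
Your argument follows the paper's proof closely: both deduce the flat $\rho_\G^\bb$-connection and its compatibility from Lemma~\ref{lemma:bbflatness}, and both establish isoholonomy by using the $P$-equivariant identification $\Fcal\cong\rho_P^*\Fcal(P)$ and trivializing $\Fcal(P)$ over suitable open subsets $V\subseteq\XX(P)$. The paper is slightly more explicit about the condition on $V$ (namely that $\Fcal(P)|V$ be trivial and that the $\G$-stabilizer of $V$ equal its $\G_P$-stabilizer, so that $V$ embeds in the stratum $\XX(P)_{\G(G_P)}$), but your sketch identifies the same mechanism and correctly flags the descent subtlety. Your use of Kolchin's theorem to justify the nilpotency condition in Definition~\ref{def:basic3} is a reasonable elaboration of what the paper treats as evident.

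There is, however, a genuine error in your reality argument. Corollary~\ref{cor:real} asserts that the \emph{twisted} class $(2\pi\sqrt{-1})^k\cha_k(\Fcal,\nabla_\rho)$ is fixed under \emph{full} complex conjugation, meaning the composite of coefficient conjugation with the pull-back along the real involution $\iota:X\to X$. This is not the same as lying in $H^{2k}(X;\RR)$, which would require invariance under coefficient conjugation alone. In fact the paper's own Theorem~\ref{thm:main3} shows that $\cha_{2r+1}^\GP(\Fcal_g)$ has nonzero imaginary part, so these classes are \emph{not} in general real. The appearance of $\RR$ in the statement is a misprint for $\CC$; the paper's proof of the corollary makes no attempt to establish reality, and you should not either.
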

\begin{proof}
It is clear that Lemma \ref{lemma:bbflatness} produces a flat $\rho_\G^\bb$-connection  $\nabla_{\rho_\G^\bb}$ on $\Fcal_\G$. What remains to see is that
this structure is isoholonomic. The lemma in question identifies 
$\Fcal$ in a $P$-equivariant manner with the $\rho_P$-pull-back of a vector bundle $\Fcal(P)$ on $\XX(P)$. We may cover $\XX(P)$ by open subsets $V\subseteq \XX(P)$ such that $\Fcal (P)|V$ is trivial and the $\Gamma$-stabilizer of $V$
is in fact its $\G_P$-stabilizer. Such a $V$ then embeds as an open subset in a Baily-Borel stratum $\XX(P)_{\G (P)}$ of $\XX^\bb_\G$ and a trivialization of $\Fcal (P)|V$ yields the flat connection that is being asked for.
\end{proof}

\begin{remark}\label{rem:satext}
The argument used in the proof of Lemma \ref{lemma:bbflatness}  can probably be extended to prove that the automorphic bundles $\Fcal$ and $\{\Fcal (P)\}_{P\in\Pscr_{\max}}$ define a bundle $\Fcal^\bb$ over the Satake extension $\XX^\bb$  in the sense  that it becomes a locally free module over the structure sheaf of $\XX^\bb$. This has some interest, as it may somewhat simplify the proof in \cite{zucker:rbs2} of Conjecture 9.5 of \cite{gt}. But since $\G$  defines an arithmetic subgroup of $L_P^\ver$ which then may act nontrivially in the fibers of $\Fcal (P)$, $\Fcal^\bb_\G$  will in general not  be a locally free module over the structure sheaf of $\XX^\bb_\G$.
\end{remark}

\begin{example}\textbf{A symplectic group and its Hodge bundle.}\label{example:symplectic}
Let be given a finite dimensional real vector space $V$ endowed with a nondegenerate symplectic form $a: V\times V\to \RR$. 
The automorphism group of $(V,a)$ defines an almost simple algebraic group defined over $\RR$ whose group of real resp.\ complex points (endowed with the Hausdorff topology) is $\Sp (V)$ resp.\ $\Sp (V_\CC)$. Let $h: V_\CC\times V_\CC\to \CC$ be the Hermitian form defined by  $h(v,v'):=\sqrt{-1}a_\CC (v, \overline v')$. It has signature $(g,g)$. The associated symmetric space of $\Sp (V)$ and its compact dual are obtained as follows: $\check\XX=\check\HH (V)$ is the locus in the Grassmannian $\Gr_g(V_\CC)$ which parametrizes the $g$-dimensional subspaces $F\subseteq V_\CC$  that are totally isotropic relative to $a_\CC$ and $\XX=\HH(V)$ is the open subset of  $\check\HH (V)$ parametrizing those $F$ on which in addition $h$ is positive definite. The group $\Sp (V)$ indeed acts transitively on $\HH(V)$ and the stabilizer of any $[F]\in \HH(V)$ restricts isomorphically to the unitary group $U(F)$, which is a maximal compact subgroup of $\Sp (V)$. The restriction of the tautological rank $g$ bundle on  $\Gr_g(V_\CC)$ to $\HH(V)$ resp.\  $\check\HH (V)$ is an automorphic bundle $\Fcal=\Fcal_V$  resp.\ its natural extension $\check\Fcal$ to  $\check\HH (V)$. 
For $[F]\in \HH(V)$ we have $V_\CC=F\oplus\overline F$ and so $F$ is the $(1,0)$-part of a Hodge structure on $V$ of weight 1 polarized by $a$. Thus $\HH(V)$ also parametrizes polarized Hodge structures on $V$ of this type.  For this reason, $\Fcal$ is often called the \emph{Hodge bundle}. 
Notice that $h$ defines on $\Fcal$ an inner product (that we continue to denote by  $h$).

Now assume $V$ and $a$ are defined over $\QQ$ so that our  group is also defined over $\QQ$.
A maximal proper $\QQ$-parabolic subgroup $P\subseteq \Sp (V)$ is the $\Sp (V)$-stabilizer of a nonzero isotropic subspace $I\subseteq V$ defined over $\QQ$ and vice versa. So $\Pscr_{\max}$  may be identified with the set $\Iscr (V)$ of nonzero $\QQ$-isotropic subspaces of $V$. We will therefore index our  objects accordingly.  

Let $I\in \Iscr (V)$ and write $V'_I$ for $V/I$ and $V_I\subseteq V'_I$ for $I^\perp/I$. Note that  the symplectic form identifies $V'_I$ with the dual of $I^\perp$ and induces on $V_I$ a nondegenerate symplectic form.  Then the unipotent radical $R_u(P_I)$ of $P_I$  is the subgroup that acts trivially on $I$ and 
$V_I$; note that it then also acts trivially on $V/I^\perp$. This identifies the Levi quotient $L_I$ of $P_I$ with $\GL(I)\times \Sp (V_I)$. The center $U_I$ of $R_u(P_I)$ is the subgroup that acts trivially on $I^\perp$ (or equivalently, on $V'_I$). Its (abelian) Lie algebra $\ufrak_I$ can  be identified with  $\sym^2 I\subseteq \sym^2V\cong\gfrak$ and  $C_I\subseteq \sym^2 I$ is the cone of positive  definite elements. This identifies $R_u(P_I)/U_I$ with a group of elements in  $\GL(I^\perp)$ that act trivially on both $I$ and $V_I$; this group is abelian  with Lie algebra $\Hom (V_I, I)$ (which we shall identify with $I\otimes V_I$ by means  of the nondegenerate symplectic form on $V_I$). 
The central subgroup $A_I\subseteq P_I$ appears here as the group  of  scalars in $\GL(I)$, hence is a copy of $\RR^\times$.
The adjoint action of  $L_I=\GL(I)\times \Sp (V_I)$ on this Lie algebra is the obvious one.
In terms of these isomorphisms, $M_I^\hor=\{ \pm 1_I\}\times \Sp (V_I)$,  $M_I^\ver=\SL(I)$, $L_I^\ver=\GL(I)$,   $G_I=\Sp (V_I)$ and $P_I^\ver\subseteq G_I$ is the group that acts as $\pm 1$ on $V_I$.

We next describe the maps $\XX\to \XX(P_I)'\to \XX(P_I)$. 
For this we note that for $[F]\in \HH (V)$, the projection $F\to V'_{I,\CC}$ is into and the projection $F\to V_\CC/I^\perp_\CC\cong I^*_\CC$ is onto with kernel $F\cap I^\perp_\CC$ projecting isomorphically onto a subspace of $V_{I,\CC}$ that defines an element of $\HH(V_I)$. If we denote by $\HH(V'_I)$ the subspace of  Grassmannian of $V'_{I,\CC}$ parameterizing the subspaces whose intersection with $V_{I,\CC}$ defines an element of  $\HH(V_I)$, then we obtain a diagram
\[
\begin{CD}
\rho_I: \XX=\HH(V)@>{\rho'_I}>> \XX(P_I)'=\HH(V'_I)@>{\rho''_I}>>\XX(P_I)=\HH(V_I), 
\end{CD}
\]
where the maps are the obvious ones. It is clear that $\Fcal_V$ is the $\rho'_I$-pull-back of the tautological bundle $\Fcal_{V'_I}$ over $\HH(V'_I)$.
We note that $\HH(V'_I)\to \HH(V_I)$ is a principal bundle for the vector group  $\Hom (V/I^\perp,V_I)\cong V_I\otimes I$ and that this  $V_I\otimes I$-action lifts to $\Fcal_{V'_I}$. 

Let us determine the flat $\rho''_I$-connection on $\Fcal_{V'_I}$.  
First observe that any subspace $F''\subseteq V_I$ that defines an element of $\HH(V_I)$ determines a complex structure on 
$V_I$ characterized by the property that the $\RR$-linear isomorphism 
$V_I\subseteq V_{I,\CC}\to  V_{I,\CC}/F''\cong \Hom_\CC(F'',\CC)$ is in fact  $\CC$-linear. This gives the constant vector bundle on 
$\HH(V_I)$ with fiber $V_I$ a holomorphic structure (which can be identified with the total space of the dual $\Fcal_{V_I}^\vee$ of 
$\Fcal_{V_I}$). Hence the constant vector bundle on $\HH(V_I)$ with fiber $V_I\otimes I$ also acquires a holomorphic structure 
(namely as the total space of $\Fcal_{V_I}^\vee\otimes I$); let us denote that total space by $\VV_I$. Then 
$\rho''_I:\HH(V')\to \HH(V_I)$ is a $\VV_I$-torsor in the complex-analytic category.  A local section of $\rho''_I$ identifies 
$\Fcal_{V'_I}$ with the pull-back along  $\rho''_I$ of $\Fcal_{V_I}\oplus (\Ocal_{V_I}\otimes I^*)$, with the group 
$L_I=\Sp (V_I)\times \GL (I)$ acting in  the obvious way. This gives the flat $\rho''_I$-connection on $\Fcal_{V'_I}$.

The preceding also makes it clear that $P_I\le P_J$ is equivalent  to $I\subseteq J$. In other words, $(\Pscr_{\max}, \le)$ is 
identified with $(\Iscr(V),\subseteq)$. Note that for an inclusion $I\subseteq J$ of $\QQ$-isotropic subspaces, the diagram involving 
the associated cones is
\[
\begin{CD}
\sym^2I\subseteq \sym^2J@>>> \sym^2(J/I) \\
\cup &&\cup\\
C_I\le C_J@>>> C_{J/I}.
\end{CD}
\] 
\end{example}

\section{Resolving a Baily-Borel compactification as a stratified space}

In this section we describe the data needed for Mumford's toroidial compactifications introduced in \cite{amrt} and explain how this compares with the Baily-Borel construction. We will then show:
 
\begin{theorem}\label{thm:main2}
Every toroidal resolution  $\pi:\tilde\XX\to \XX^\bb_\G$ of the Baily-Borel compactification has the property that for every  Baily-Borel stratum, the holonomy of $\Fcal_\G$ relative to its local retraction is trivial over the preimage of that stratum in $\tilde\XX$ so that (by Proposition \ref{prop:main1}) $\Fcal_\G$ extends naturally to a holomorphic  vector bundle $\tilde\Fcal_\G$ on $\tilde X$ and we have $\pi^*\chern_k^\GP(\Fcal_\G)=\chern_k(\tilde\Fcal)_\CC$. We can  choose such a resolution to be part of a stratified resolution of $\XX^\bb_\G$ so that (by Theorem \ref{thm:main}) $\chern_k^\GP(\Fcal_\G)\in F^kH^k(\XX^\bb_\G; \CC)$.
\end{theorem}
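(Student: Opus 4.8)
The statement has two halves: (a) that any toroidal resolution $\pi:\tilde\XX\to\XX^\bb_\G$ trivializes the holonomy of $\Fcal_\G$ over each Baily-Borel stratum, so that Proposition \ref{prop:main1} yields an extension $\tilde\Fcal_\G$ and the identity $\pi^*\chern_k^\GP(\Fcal_\G)=\chern_k(\tilde\Fcal)_\CC$; and (b) that a toroidal resolution can be arranged to be the first stage of an $\Scal$-resolution in the sense of Definition \ref{def:stratares}, whence Theorem \ref{thm:main} applies. I would treat these in order, doing (a) cusp-by-cusp via the local structure of Mumford's construction and (b) by a compatibility bookkeeping over the poset $\Pscr_{\max}$.

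\emph{Step 1: local picture at a cusp.} Fix $P\in\Pscr_{\max}$ and recall from Lemma \ref{lemma:bbflatness} and Cor.-Def.\ \ref{cor:bbflatstructure} that near the stratum $\XX(P)_{\G(G_P)}$ the bundle $\Fcal_\G$ is, up to the $V_P$-part, the $\rho_P$-pull-back of $\Fcal(P)$, and its $\rho_P$-holonomy is governed by the (discrete) action on the fibers of $\rho_P$, i.e.\ by $\G\cap R_u(P)$ acting through the unipotent group $U_P$ together with the cone $C_P$. A toroidal resolution is built from a $\G$-admissible polyhedral cone decomposition $\{\sigma\}$ of $C_P$ (for each $P$, compatibly); over the relevant torus embedding the fiber direction of $\rho_P$ is partially compactified so that the $U_P(\ZZ)$-quotient of $\exp(\ufrak_P+\sqrt{-1}C_P)$ becomes (an open piece of) a toric variety, and the further $V_P$-torsor direction is handled by an abelian-variety fibration. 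The key point I would extract is that, in the local analytic chart of $\tilde\XX$ lying over the open stratum that maps to $\XX(P)$, the pulled-back flat $\rho_P$-connection on $\Fcal$ acquires \emph{single-valued} (hence trivial-holonomy) flat sections: the monodromy around the toroidal boundary divisors is exactly the $U_P$-action on $\Fcal$, which is unipotent (in fact, by the description in Lemma \ref{lemma:bbflatness}, the connection is pulled back from $\XX(P)'$, on which $U_P$ acts trivially on $\Fcal'$), so after passing to the torus embedding the local system extends as a genuine vector bundle. I would then invoke the last clause of Proposition \ref{prop:main1} (trivial relative holonomy and vanishing $\eta$'s on overlaps — the $\eta$'s vanish because on $\tilde\XX$ the successive retractions become submersions onto a single chart, so the ambiguity that the $\eta^{S'}_S$ measured disappears) to conclude that $\Fcal_\G$ extends to $\tilde\Fcal_\G$ and that $\pi^*\chern_k^\GP(\Fcal_\G)=\chern_k(\tilde\Fcal)_\CC$.

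\emph{Step 2: upgrading to a stratified resolution.} Here I would use that the toroidal boundary is, after a suitable (smooth, projective) choice of fan, a simple normal crossing divisor, so Definition \ref{def:stratares}(i) holds for the open stratum. For a lower stratum $\XX(P)_{\G(G_P)}$ one repeats the construction intrinsically: its own Baily-Borel compactification is $\XX(P)^\bb_{\G(G_P)}$, and Mumford's recipe applied there (using the induced cone decompositions of the $C_{Q/P}$ for $Q>P$, which by the displayed compatibility $C_P\le C_Q\mapsto C_{Q/P}$ are exactly the images of the data already chosen upstairs) produces $\pi_S:\tilde S\to\overline S$. The factorizations (ii) and (iii) of Definition \ref{def:stratares} then amount to the statement that the toric/abelian data for $\XX(P)$ sitting inside that for $\XX$ are compatible under the projections $\rho_{Q/P}$ — which is precisely the internal consistency of a $\G$-admissible family of fans, and is in the literature (ultimately \cite{amrt}). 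So the main task is to verify that these known compatibilities package into Definition \ref{def:stratares}; I expect this to be the most delicate part, chiefly because one must simultaneously choose the fans smooth, projective, \emph{and} $\G$-equivariantly consistent across all of $\Pscr_{\max}$, and then check that the resulting boundary strata of $\tilde S[S']$ map onto strata of $\tilde S'$ rather than merely into them. Granting that, Theorem \ref{thm:main} applies verbatim and gives $\chern_k^\GP(\Fcal_\G)\in F^kH^{2k}(\XX^\bb_\G;\CC)$.

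\emph{Main obstacle.} The conceptual content of Step 1 (trivialization of the holonomy) is short once one has the local Siegel-domain-of-the-third-kind picture; the real work — and the place I would be most careful — is Step 2, reconciling the recursive, cone-by-cone toroidal data with the axioms of a stratified resolution, in particular producing a single choice of fans that is at once nonsingular, projective, and hereditarily compatible along all chains $P<P_1<\cdots<P_n$, and then checking the surjectivity-onto-strata clauses (ii)--(iii). This is where I would expect to have to cite or adapt the finer parts of \cite{amrt} rather than argue from scratch.
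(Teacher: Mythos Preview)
Your Step 1 is essentially the paper's Lemma \ref{lemma:toricversusbb}, and the overall approach matches. One point to sharpen: the reason the $\eta$'s vanish on $\tilde\XX$ is not that ``successive retractions become submersions onto a single chart'' but simply that for a face $\tau\le\sigma$ the semigroup $\exp(\langle\tau\rangle_\RR+\sqrt{-1}\tau)$ is contained in $\exp(\langle\sigma\rangle_\RR+\sqrt{-1}\sigma)$, so the flat $\rho_\tau$-connection literally restricts to the flat $\rho_\sigma$-connection and no nilpotent correction arises. The paper also makes explicit that the retractions $\rho_\sigma$ endow $\XX^\Sigma_\G$ with its own rigidified stratified structure and that $\pi^\Sigma_\G$ is a morphism in that category; this is what makes the last clause of Proposition \ref{prop:main1} directly applicable. (Projectivity of the fan is not needed here, only smoothness.)

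Step 2 contains a genuine directional problem. You propose to build the toroidal resolution of a boundary stratum $\overline{\XX(P)_{\G(G_P)}}$ from the \emph{images}, under the projections $\ufrak_Q\to\ufrak_{Q/P}$, of the cones already chosen upstairs. But the image of a fan under a linear surjection is in general not a fan: distinct cones can project to cones that overlap in their interiors, and even when they do not, smoothness is typically destroyed. The paper (Theorem \ref{thm:bbres}) runs the induction in the opposite direction, \emph{downward} on $(\Pscr^*_{\max},\le)$: one first chooses the $\Sigma(P)$ for the boundary strata (starting with the maximal $P$, where the choice is essentially vacuous), and only at the end constructs $\Sigma=\Sigma(G)$ by pulling each already-chosen $\Sigma(P)$ back along $\ufrak_Q\to\ufrak_{Q/P}$ to a rational polyhedral fundamental domain $\Pi\subset C^+_Q$, taking the common refinement of these finitely many pullbacks, propagating $\G$-equivariantly, and then refining once more to smoothness. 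Pullbacks of fans along linear maps are fans, so this works; pushforwards do not. Your instinct that this is the delicate step and that the issue is hereditary compatibility along chains is correct, but the construction has to go from the deepest strata outward, not the other way.
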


For what follows it is, as a matter of notation, convenient to pretend that $G$ is also a maximal $\QQ$-parabolic subgroup:  since $R_u(G)$ is trivial we have  $C_G=\ufrak_G= \vfrak_G=0$ and  we take $G^\ver=\{1\}$,  $\XX(G)=\XX$.
 We write $\Pscr^*_{\max}=\Pscr_{\max}\cup\{ G\}$ for the corresponding collection. A partial order $\le$ on  $\Pscr^*_{\max}$ is defined as before and has $G$ as its minimal element.

The cones $\{C_P\}_{P\in\Pscr^*_{\max}}$ are pairwise disjoint as subsets of $\gfrak$. We denote their  union by $C(\gfrak)$ and write
$C_{P}^+$ for the union of  all the $C_Q$ with $Q\le P$ with $Q\in\Pscr^*_{\max}$ (so $0\in C_{P}^+$). Then $C_{P}^+$ is the closure of $C_P$ in $C(\gfrak)$ and is spanned by $\overline C_P\cap \vfrak_P(\QQ)$.
Now $\G_P:=\G\cap P$ is an arithmetic subgroup of $P$. In particular, $\G\cap R_u(P)$ is  an extension of a lattice 
(namely the image of $\G_P$ in $V_P$) by a lattice  (namely $\G\cap U_P$). The image of $\G_P$ in $\GL (\ufrak_P)$ preserves the  lattice
$\log (\G\cap U_P)$ in $\ufrak_P$ and therefore acts properly discretely  on $C_P$. 
An important feature of this action is that it has in $C_P^+$ a fundamental domain that is a \emph{rational polyhedral cone} 
(i.e., the convex cone spanned by a finite subset of $\log (\G\cap U_P)$). 
The extra ingredient needed for a toroidial compactification  is a \emph{$\G$-admissible decomposition} of $C(\gfrak)$, that is, a 
$\G$-invariant collection $\Sigma$ of rational polyhedral cones that is closed under `taking faces' and `taking intersections', 
whose relative interiors are pairwise disjoint and whose union is  $C(\gfrak)$. It is a basic fact \cite{amrt} that 
$\G$-admissible decompositions exist and that any two such have a common refinement. 

Given such a $\Sigma$, then the restriction of $\Sigma$ to the \emph{open} cone $C_P$, $\Sigma| C_P$, defines a relative 
torus embedding  $\XX_{\G\cap U_P}\subseteq\XX_{\G\cap U_P}^{\Sigma|C}$ over $\XX(P)'$. (Strictly speaking, there is not really a torus 
acting but rather an open semigroup in a torus, namely the image of  $\ufrak+\sqrt{-1}C_P$ in $U_P(\CC)/(\G\cap U_P)$ under the exponential map.) The result is a normal analytic variety with an action of the semigroup $\ufrak+\sqrt{-1}C_P$ and which has toroidal singularities. The group $\G_P/(\G_P\cap U_P)$ acts on it properly discontinuously. When we subsequently divide out by the image $\G(V_P)$ of $\G\cap R_u(P)$ in $V_P$ (which is just a lattice) we get a family of toroidal embeddings over $\XX(P)'_{\G(V_P)}$ where  the latter is now the total space  of a family of  abelian varieties
$\XX(P)'_{\G(V_P)}\to \XX(P)$ (or rather a torsor thereof). If we divide out by $\G_P/(\G_P\cap U_P)$ instead we get an  abelian torsor with base the 
Baily-Borel stratum $\XX(P)_{\G(G_P)}$ and $\XX_{\G_P}\subseteq\XX^{\Sigma|C}_{\G_P}$ appears as a toroidal embedding over the total space of this torsor.

It is perhaps more transparent, and also more in the Satake-Baily-Borel spirit, to do this construction before dividing out by 
$\G_P\cap U_P$, that is, to first introduce a $\G$-equivariant  extension $\XX^\Sigma$ of $\XX$ of ringed spaces. This brings us, 
like the Satake extension, outside the realm of analytic spaces, but the advantage of  this approach is that it allows us to  concisely 
describe the maps that exist between various compactifications. Here is how to proceed. For every $\sigma\in\Sigma$ we can form 
a holomorphic  quotient   $\rho_\sigma: \XX\to \XX(\sigma)$.  This map can be understood as the inclusion of $\XX$ in its  
$\exp (\la \sigma\ra_\CC)$-orbit in  the compact dual $\check{\XX}$ of $\XX$ ($\XX$ is an open subset of $\check{\XX}$) followed by 
the formation of the  $\exp (\la \sigma\ra_\CC)$-orbit space. Alternatively, $\rho_\sigma$ is the formation of the  quotient  of 
$\XX$ with respect to the equivalence relation generated by the relation 
$z\sim z'\Leftrightarrow z'\in \exp (\la \sigma\ra_\RR+\sqrt{-1}\sigma)z$. We let $\XX^\Sigma$  be the disjoint union of  the 
$\XX(\sigma)$'s (this includes $\XX=\XX (\{0\})$) and equip this union with the topology generated by the open subsets of 
$\XX$ and those of the form $\Omega^{\bb, \sigma}$, where 
$\sigma\in\Sigma$, $\Omega\subseteq \XX$ is an open subset  invariant under the semigroup $\exp (\la \sigma\ra_\RR+\sqrt{-1}\sigma)$, and
\[
\Omega^{\bb, \sigma}:=\bigsqcup_{\tau\in\Sigma; \tau\subseteq \overline\sigma} \rho_\tau(\Omega)
\]
(note that $\Omega$ appears in this union for $\tau=\{0\}$).   The structure sheaf is the sheaf of complex valued continuous functions that are holomorphic on each stratum.
Note that when $\sigma\subseteq C_P$, the map $\rho'_P:\XX\to \XX(P)'$ factors through  $\XX(\sigma)$. It is then clear that the composite projections 
$\XX(\sigma)\to \XX(P)'\to \XX(P)$ combine to define a continuous $\G$-equivariant  morphism $\pi^\Sigma: \XX^\Sigma\to \XX^\bb$ of locally ringed spaces, whose restriction over $\XX(P)$ in fact factors over 
$\XX(P)'$.  This drops to a morphism $\pi^\Sigma_\G: \XX^\Sigma_\G\to \XX^\bb_\G$ in the analytic category which has the property that  it factors over  a Baily-Borel stratum  through the abelian torsor that lies over it. 
We can now prove part of Theorem \ref{thm:main2}.

\begin{lemma}\label{lemma:toricversusbb}
The retractions $\XX\to \XX(\sigma)$ turn $\XX^\Sigma_\G$ into a rigidified stratified space such that $\pi^\Sigma_\G: \XX^\Sigma_\G\to \XX^\bb_\G$ is a morphism in this category. An automorphic bundle $\Fcal_\G$ on $\XX_\G$  satisfies the hypotheses of the last clause of Proposition \ref{prop:main1}  with respect to $\XX^\Sigma_\G$ and so the  total Chern class of the resulting extension $\Fcal^\Sigma_\G$ to $\XX^\Sigma_\G$ (with complex coefficients)  equals 
$(\pi^\Sigma_\G)^*\chern^{\GP}(\Fcal_\G)$.
\end{lemma}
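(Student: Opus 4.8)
The plan is to prove the three assertions in sequence: that the family $\{\rho_\sigma\}$ descends to a system of retractions making $\XX^\Sigma_\G$ a rigidified stratified variety; that $\pi^\Sigma_\G$ respects this structure; and that an automorphic bundle $\Fcal_\G$ meets the two hypotheses of the last clause of Proposition~\ref{prop:main1} with respect to $\XX^\Sigma_\G$ (trivial relative holonomy and vanishing comparison differentials). Once this is in place, that clause produces the extension $\Fcal^\Sigma_\G$ together with the identity $\chern_k(\Fcal_\G,\nabla_{\rho^\Sigma_\G})=\chern_k(\Fcal^\Sigma_\G)_\CC$, and the functoriality clause of the same proposition, applied to the morphism $\pi^\Sigma_\G$, converts this into the asserted equality with $(\pi^\Sigma_\G)^*\chern^\GP(\Fcal_\G)$. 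Throughout, the strata of $\XX^\Sigma_\G$ are the images of the $\XX(\sigma)$, indexed by the finitely many $\G$-orbits of cones $\sigma\in\Sigma$, each $\sigma$ having its relative interior in a unique $C_P$ with $P\in\Pscr^*_{\max}$.

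First I would record the rigidified structure: for the stratum $S$ coming from $\sigma$ take $\rho_S$ to be the retraction induced near $S$ by $\rho_\sigma:\XX\to\XX(\sigma)$. If $\sigma$ is a face of $\tau$ in $\Sigma$ (so $\tau$ is the larger cone and its stratum $S'$ the deeper one), then $\la\sigma\ra_\CC\subseteq\la\tau\ra_\CC$, so $\rho_\tau$ factors as $\XX\xrightarrow{\rho_\sigma}\XX(\sigma)\to\XX(\tau)$, and this factorisation is exactly the cocycle-type identity $\rho_{S'}\rho_S=\rho_{S'}$ required by Definition~\ref{def:basic2}; topological normality of $\XX^\Sigma_\G$ is clear since it is locally a normal toric variety over a complex manifold. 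For $\pi^\Sigma_\G$: the composite of submersions $\XX(\sigma)\to\XX(P)'\to\XX(P)$ (valid since $\rho'_P$, hence $\rho_P$, factors through $\rho_\sigma$ whenever $\sigma\subseteq C_P$) carries the $\sigma$-stratum submersively onto the Baily--Borel stratum attached to $P$; for $\sigma=\{0\}$ this is the identity of $\XX_\G$, so the preimage of the open stratum is the open stratum; and from the same factorisation one reads off $\pi^\Sigma_\G\circ\rho_\sigma=\rho_P\circ\pi^\Sigma_\G$ near the $\sigma$-stratum. These are precisely the conditions defining a morphism of rigidified stratified spaces.

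Then I would turn to $\Fcal_\G$. Fix $\sigma$ with relative interior in $C_P$. Lemma~\ref{lemma:bbflatness} identifies $\Fcal$, $P$-equivariantly, with the $\rho_P$-pull-back of $\Fcal(P)$ via the canonical holomorphic trivialisation along the fibres of $\rho'_P$, with $R_u(P)$ acting trivially on that trivialising data; since the fibres of $\rho_\sigma$ (the orbits of $\exp\la\sigma\ra_\CC$ inside $\XX$) sit inside those of $\rho'_P$, the same data presents $\Fcal$ as the $\rho_\sigma$-pull-back of a holomorphic bundle on $\XX(\sigma)$. The triviality of the $R_u(P)$-action — and in particular of $\G\cap R_u(P)$, the part of $\G$ that effects the gluing into the $\sigma$-stratum — is what makes this description descend: near the $\sigma$-stratum $\Fcal_\G$ is the $\rho_\sigma$-pull-back of a holomorphic bundle on the stratum, so the flat $\rho_\sigma$-connection has trivial relative holonomy at each point of that stratum. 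For incident strata (a pair of cones $\sigma\subseteq\tau$), the flat $\rho_\sigma$- and $\rho_\tau$-connections are both obtained by restricting the single holomorphic trivialisation of $\Fcal$ along the fibres of $\rho'_Q$ for the parabolic $Q$ with $\tau^\circ\subseteq C_Q$ (whose $\rho'_Q$-fibres contain both orbit types), so restricting $\nabla_{\rho_\tau}$ along the fibres of $\rho_\sigma$ returns $\nabla_{\rho_\sigma}$ exactly; hence the comparison differential $\eta$ vanishes. The last clause of Proposition~\ref{prop:main1} now applies, giving $\Fcal^\Sigma_\G$ and $\chern_k(\Fcal_\G,\nabla_{\rho^\Sigma_\G})=\chern_k(\Fcal^\Sigma_\G)_\CC$; and since $\nabla_{\rho^\Sigma_\G}$ is, by this very construction, the $\pi^\Sigma_\G$-pull-back of the flat $\rho^\bb_\G$-connection of Corollary-definition~\ref{cor:bbflatstructure}, functoriality yields $(\pi^\Sigma_\G)^*\chern_k^\GP(\Fcal_\G)=\chern_k(\Fcal_\G,\nabla_{\rho^\Sigma_\G})=\chern_k(\Fcal^\Sigma_\G)_\CC$, and summing over $k$ gives the claimed identity of total Chern classes.

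The step I expect to be the real content — as opposed to bookkeeping — is the holonomy-triviality in the third paragraph: it is the incarnation, in the stratified-retraction language, of Mumford's theorem that an automorphic bundle extends across a toroidal boundary, and it is here that one genuinely uses that the collapsing direction of $\rho_\sigma$ lies inside $R_u(P)$ (so the potentially nontrivial $L^\ver_P$-holonomy of $\Fcal(P)$ — the subtlety flagged in the introduction — never enters) and that $\G\cap R_u(P)$ acts trivially on the trivialising data. Once the face-compatibility of the $\rho_\sigma$ and their factorisations through the $\rho_P$ are recorded, the verification that $\XX^\Sigma_\G$ is rigidified and that $\pi^\Sigma_\G$ is a morphism is routine.
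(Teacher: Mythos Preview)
Your proof is correct and follows essentially the same line as the paper's own argument, only spelled out in considerably more detail. The paper's proof is two sentences: it notes that $\rho_P$ factors through $\rho_\sigma$ (since the fibres of the latter, being orbits of $\exp(\la\sigma\ra_\RR+\sqrt{-1}\sigma)$, sit inside the fibres of the former), which handles the rigidified-space and morphism assertions; and it observes that for a face relation $\tau\le\sigma$ the semigroup inclusion $\exp(\la\tau\ra_\RR+\sqrt{-1}\tau)\subseteq\exp(\la\sigma\ra_\RR+\sqrt{-1}\sigma)$ makes the induced relative flat connections compatible, which gives $\eta=0$. Your unpacking of the holonomy-triviality (via $\G\cap R_u(P)$ acting trivially on the trivialising data from Lemma~\ref{lemma:bbflatness}) and your explicit invocation of the functoriality clause of Proposition~\ref{prop:main1} to pass from $\chern_k(\Fcal^\Sigma_\G)_\CC$ to $(\pi^\Sigma_\G)^*\chern_k^\GP(\Fcal_\G)$ are exactly the steps the paper leaves implicit.
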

\begin{proof}
Let $\sigma\in \Sigma$ be such that its relative interior is contained in $C_P$. Then $\rho_P$ (whose fibers are orbits of $R_u(P)\exp(\sqrt{-1}C_P)$)  factors through $\rho_\sigma$ (whose fibers are orbits $\exp (\la \sigma\ra_\RR+\sqrt{-1}\sigma)$). This proves the first assertion. The resulting local flat connections 
$\nabla_{\rho_\sigma}$ on our automorphic bundle are compatible: if $\tau\in\Sigma$ is a face of $\sigma$, then 
$\exp (\la \tau\ra_\RR+\sqrt{-1}\tau)\subseteq \exp (\la \sigma\ra_\RR+\sqrt{-1}\sigma)$ and so the local flat connection associated to $\tau$ induces the one associated to $\sigma$.
\end{proof}

\begin{remark}\label{rem:torext}
The extension $\Fcal^\Sigma_\G$ of $\Fcal_\G$  across $\XX^\Sigma_\G$ appears at various places in the literature; it is  the canonical extension described in \cite{mumford:hirz}. When $\Fcal$ belongs to the Hodge filtration of a locally homogeneous variation of Hodge structure, then it is also the Deligne extension.
Had we introduced the locally free $\Ocal_{\XX^\bb}$-module $\Fcal^\bb$ as in Remark \ref{rem:satext}, then we could say that $\pi^{\Sigma *}\Fcal^\bb$ is a locally free $\Ocal_{\XX^\Sigma}$-module with $\G$-action and $\Fcal^\Sigma_\G$ would simply be its $\G$-quotient (the $\G$-stabilizer of every $x\in \XX^\Sigma $ acts trivially on the fiber $\Fcal^\bb(x)$). 
\end{remark}

Let us  say that  the $\G$-admissible decomposition $\Sigma$ is \emph{smooth} if each member is an integral  simplicial cone  
(i.e., the cone spanned by an integral partial basis of $\log(\G\cap U_P)$ for some $P$). This ensures that $\XX^\Sigma_\G$ is 
smooth. In that case we will refer to $\XX^\Sigma_\G$ simply as a \emph{toroidal resolution} of  $\XX^\bb_\G$. 
Another basic fact is that any  $\G$-admissible decomposition admits a smooth refinement. 
The  following proposition will complete the proof Theorem \ref{thm:main2}.

\begin{theorem}\label{thm:bbres}
The Baily-Borel compactification $\XX^\bb_\G$  admits a toroidal resolution relative to its natural stratification in the sense of Definition 
\ref{def:stratares}.
\end{theorem}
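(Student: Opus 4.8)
The plan is not to take an arbitrary toroidal resolution but to choose the $\G$-admissible decomposition $\Sigma$ of $C(\gfrak)$ with some care, and to let the resolution of the closure of each Baily-Borel stratum be again toroidal. Write $\Pscr^*_{\max}$ as above, so $S_G:=\XX_\G$ is the open stratum, $\overline{S_G}=\XX^\bb_\G$, and for $P\in\Pscr_{\max}$ the stratum $S_P:=\XX(P)_{\G(G_P)}$ has $\overline{S_P}=\XX(P)^\bb_{\G(G_P)}$, the Baily-Borel compactification attached to $G_P$. For $P\le Q$ the projection $\ufrak_Q\to\ufrak_{Q_{/P}}$ carries $C_Q$ onto $C_{Q_{/P}}$, so a decomposition $\Sigma$ of $C(\gfrak)$ that is \emph{compatible with these projections} induces a decomposition $\Sigma_P$ of $C(\gfrak_P)=\bigsqcup_{Q\ge P}C_{Q_{/P}}$, necessarily $\G(G_P)$-admissible. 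I would then take $\pi_P:\tilde S_P:=\XX(P)^{\Sigma_P}_{\G(G_P)}\to\overline{S_P}$ as the chosen resolution of $\overline{S_P}$ (with $\pi_G=\pi$, $\Sigma_G=\Sigma$, $\tilde S_G=\XX^\Sigma_\G$). The whole theorem then reduces to producing one $\Sigma$ with the right combinatorics.

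The properties I need of $\Sigma$ are: (a) $\Sigma$ is $\G$-admissible and smooth; (b) $\Sigma$ is compatible with every projection $\ufrak_Q\to\ufrak_{Q_{/P}}$, so that the $\Sigma_P$ exist, are smooth, and are mutually compatible in the sense that the decomposition of $C(\gfrak_Q)$ induced from $\Sigma_P$ equals $\Sigma_Q$ whenever $P\le Q$; and (c) the \emph{interior-ray property}: whenever the relative interior of a member $\sigma\in\Sigma$ lies in $C_P$, $\sigma$ has a one-dimensional face whose relative interior also lies in $C_P$. Property (c) is exactly what makes $\overline{\pi^{-1}(S_P)}$ a union of \emph{irreducible components} of the boundary divisor of $\XX^\Sigma_\G$: in the orbit--cone dictionary the closed boundary stratum attached to a $d$-dimensional cone has codimension $d$, so without (c) this closure can be a union of boundary strata of codimension $>1$. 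This is not a vacuous worry: already for $\Acal_2$ the usual (Voronoi or central-cone) toroidal compactifications fail to be stratified resolutions, because over the cusp $\Acal_0$ the preimage closure has codimension $2$.

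To construct such a $\Sigma$ I would induct on the $\QQ$-rank of $\Gcal$; the anisotropic case is trivial, since then $\XX^\bb_\G=\XX_\G$ is already smooth and compact and the stratification has one stratum. By induction each $G_P$ admits a smooth $\G(G_P)$-admissible decomposition of $C(\gfrak_P)$ with the analogues of (a)--(c), and after passing to a common refinement (taken from deepest strata outward) these may be assumed mutually compatible, yielding a \emph{boundary datum} $\{\Sigma_P\}_P$. One extends it to a $\G$-admissible decomposition of all of $C(\gfrak)$---the extension step of \cite{amrt}---and then refines, by a sequence of $\G$-equivariant star subdivisions performed at integral points lying in the \emph{open} cone $C_P$ containing the relative interior of the cone being subdivided. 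Such subdivisions automatically create, and never destroy, the one-dimensional face demanded by (c); they can be chosen so as not to disturb the already-fixed $\Sigma_P$; and they can be continued so as to restore unimodularity. This produces $\Sigma$ with (a)--(c).

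Granting $\Sigma$, the axioms of Definition \ref{def:stratares} are checked stratum by stratum. Smoothness of $\Sigma$ gives that $\pi:\XX^\Sigma_\G\to\XX^\bb_\G$ is a resolution, an isomorphism over $\XX_\G$, with normal crossing boundary---one further generic refinement makes it \emph{simple}---so (i) holds, and likewise for each $\pi_P$. For $S_P<S_G$, property (c) identifies $\overline{\pi^{-1}(S_P)}$ with the union of those irreducible boundary components of $\XX^\Sigma_\G$ attached to the rays $\rho$ of $\Sigma$ with relative interior in $C_P$, and the factorization of (ii) through $\tilde S_P$ is precisely the statement that this union is a toroidal fibration, over an abelian torsor over $\tilde S_P$, whose fan is by construction $\Sigma_P$, strata visibly going to strata; axiom (ii) for the $\pi_P$ is the same assertion for $G_P$, which holds by property (c) of $\Sigma_P$. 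Finally (iii) follows from the compatibility in (b) together with the transitivity $\ufrak_R\to\ufrak_{R_{/P}}\to\ufrak_{R_{/Q}}$ of the projections. The one genuinely delicate point---the main obstacle---is the construction of $\Sigma$ in the third paragraph: simultaneously enforcing smoothness of $\Sigma$ and of every induced $\Sigma_P$, the interior-ray property, and the nested compatibility over the whole poset $\Pscr^*_{\max}$, all $\G$-equivariantly, while making the attendant sequence of subdivisions terminate. Once $\Sigma$ is in place, the rest is bookkeeping with the relative torus embeddings over the Baily-Borel strata.
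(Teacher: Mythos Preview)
Your proposal follows the same architecture as the paper's proof: reduce the theorem to the combinatorial problem of constructing, for every $P\in\Pscr^*_{\max}$, a $\G(G_P)$-admissible decomposition $\Sigma(P)$ of $C(\gfrak_P)$, compatible under the projections $\ufrak_{Q_{/P_0}}\to\ufrak_{Q_{/P}}$ for all chains $Q\ge P\ge P_0$, and then build these by downward induction on the poset (your induction on $\QQ$-rank is the same thing). The paper spells out only the final inductive step---pulling back each already-constructed $\Sigma(Q)$ to a rational polyhedral fundamental domain $\Pi_P\subseteq C_P^+$, taking the common refinement $\Sigma(\Pi_P)$, and then passing to a $\G$-admissible smooth refinement---while you describe the same induction in outline.

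Where you go beyond the paper is your property (c), the interior-ray condition. You are right that Definition~\ref{def:stratares}(ii) demands that $\overline{\pi^{-1}S_P}$ be a union of \emph{irreducible components} of the boundary divisor, and in the toroidal dictionary this says: every $\sigma\in\Sigma$ with relative interior in $C_P$ must have a one-dimensional face whose relative interior also lies in $C_P$. Your $\Acal_2$ remark is to the point; the standard fans there do fail this. The paper's proof does not isolate this condition, and its tacit fineness hypothesis (that $\{Q:\sigma\cap C_Q^+\ne\emptyset\}$ be a chain for every $\sigma$) does not by itself deliver it. So your proposal is, in this respect, more scrupulous. Your remedy---$\G$-equivariant star subdivisions centered at integral points of the \emph{open} cone $C_P$ carrying the relative interior of the cone being subdivided---is the standard device and works; just be sure to argue that such refinements of $\Sigma(P_0)$ preserve the projection compatibility already in place (they do, since ``the image of the relative interior of $\sigma$ lies in the relative interior of some $\tau\in\Sigma(P)$'' is stable under refining $\sigma$), and that you enforce (c) and smoothness at each level of the induction, not only at the end, so that every induced $\Sigma_P$ inherits them. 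With that bookkeeping done, your argument and the paper's coincide.
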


\begin{proof}
In what follows we tacitly assume that the partitions $\Sigma$ of $C(\gfrak)$ we consider are so fine that for any $\sigma\in \Sigma$, the collection of $P\in \Pscr_{\max}^*$ for which $\sigma$ meets $C^+_P$  is a well-ordered subset of $\Pscr_{\max}^*$. The same applies to partitions of  the cones $C(\gfrak_P)$.

The first question we must address is the following. Let $P\in\Pscr_{\max}$ and suppose we are given a $\G$-admissible decomposition $\Sigma$ of $C(\gfrak)$ and a $\G(G_P)$-admissible decomposition $\Sigma(P)$ of $C(\gfrak_P)$. The former defines 
$\pi^\Sigma:\XX^\Sigma\to \XX^\bb$ and the latter defines $\pi^{\Sigma(P)}:\XX^{\Sigma(P)}\to \XX(P)^\bb$ and we want to know  when  the restriction of $\pi^\Sigma$ to the closure of $(\pi^\Sigma)^{-1}\XX(P)$ in $\XX^\Sigma$ factors through $\pi^{\Sigma(P)}$. For $(\pi^\Sigma)^{-1}\XX(P)$ itself there is no issue: we have a factorization  $(\pi^\Sigma)^{-1}\XX(P)\to \XX^{\Sigma(P)}\to \XX(P)^\bb$.

A rational boundary component of  $\XX(P)^\bb\ssm \XX(P)$  is of the form $\XX(Q)$, with  $Q\in\Pscr_{\max}$ such  that $Q>P$, or equivalently, $C_P\subset C_Q^+$. 
A  stratum of $\XX^{\Sigma}$ over $\XX(Q)$ that lies in the closure of a stratum over $\XX(P)$ is of the form $\XX(\sigma)$, 
with $\sigma\in\Sigma |C^+_Q$  such that $\sigma$ meets  $C_Q$ and  $(\sigma\ssm \{ 0\}) \cap C^+_P$  is nonempty and contained in $C_P$ (recall that $\XX(\sigma)$ is the quotient of $\XX$ by the equivalence relation generated by $z\sim z'\Leftrightarrow z'\in \exp (\la \sigma\ra_\RR+\sqrt{-1}\sigma)$). On the other hand,  a  stratum of $\XX(P)^{\Sigma (P)}$ over 
$\XX(Q)$ is of the form $\XX(P)(\tau)$, where  $\tau\in\Sigma (P)$ is such that the relative interior of $\tau$ lies in $C_{Q_{/P}}$. We obtain it as a quotient of the equivalence relation on $\XX(P)$ generated by $z\sim z'\Leftrightarrow z'\in \exp (\la \tau\ra_\RR+\sqrt{-1}\tau)$. Let us now  also recall that $C_{Q_{/P}}$ is the image of $C_Q$ under the projection  $\ufrak_Q\to \ufrak_Q/ \ufrak_Q\cap R_u(\pfrak)\cong \ufrak_{Q_{/P}}$. So $\XX(\sigma)$ maps onto $\XX(\tau)$ if and only if this projection maps $\sigma$ to the relative interior of $\tau$. In other words, we want 
that this projection maps any member of $\Sigma$ in the star of $C_P$ in $C_Q^+$ to a member of $\Sigma (P)$.

This reduces the proposition to a combinatorial issue: we must construct for
every  $P\in\Pscr^*_{\max}$ a  $\G(G_P)$-admissible decomposition $\Sigma(P)$ of $C(\gfrak_P)$ such that 
\begin{enumerate}
\item [(i)] $\g\in\Gamma$ takes $\Sigma(P)$ to $\Sigma(\gamma P\gamma^{-1})$,
\item [(ii)] for every chain of triples $Q\ge P\ge P_0$ in $\Pscr_{\max}^*$, the projection  
\[
\ufrak_{Q_{/P_0}}\cong \ufrak_Q/ \ufrak_Q\cap R_u(\pfrak_0) \to \ufrak_Q/ \ufrak_Q\cap R_u(\pfrak)\cong \ufrak_{Q_{/P}}
\]
maps every member of $\Sigma (P_0)$ in the star of $C_{P_{/P_0}}$ to a member of $\Sigma (P)$.
\end{enumerate}

We begin  with choosing a $\Sigma (Q)$ for every member $Q$ of $\Pscr^*_{\max}$ that is maximal for $\le$ such that 
$(i)$ is satisfied. We then proceed with downward induction on the partially ordered set $(\Pscr^*_{\max}, \le )$ and assume that 
we have constructed  for every $P\in \Pscr_{\max}$ a $\G(G_P)$-admissible decomposition $\Sigma(P)$ of $C(\gfrak_P)$ 
satisfying (i) and (ii), so that it remains to construct $\Sigma=\Sigma(G)$. 

For every maximal element $P$ of  $\Pscr_{\max}$ we choose a rationally polyhedral cone $\Pi_P\subseteq C_P^+$ that is a fundamental 
domain for the action of $\G_P$ on $C_P^+$ in such a manner
 that $\Pi_{\g P}=\g (\Pi_P)$.  For every face $Q\le P$ such that $\Pi_P\cap C_Q\not=\emptyset$ the image of $\Pi_P$ in $C^+_{P_{/Q}}$ is a 
 rationally polyhedral cone and so meets  only a finite number of members of $\Sigma (Q)$. Hence the pull-back of $\Sigma (Q)$ to $\Pi_P$ is 
 a finite decomposition of $\Pi_P$ into rationally polyhedral cones. The set of $Q$ with $\Pi\cap C_Q\not=\emptyset$ is also finite and so the 
 finitely intersections of these pull-backs make up a decomposition $\Sigma (\Pi_P)$ of $\Pi_P$ into finitely many rationally polyhedral cones. 

Now let $P$ run over a system of representatives $\{P_i\}_{i=1}^r$ of the $\G$-action in the collection of maximal elements of $\Pscr_{\max}$. 
So for each $i$ we have a rationally polyhedral cone $\Pi_i$ and a decomposition $\Sigma (\Pi_{P_i})$ of that cone. Choose a  
$\G$-invariant admissible decomposition  $\Sigma$ which refines each $\Sigma (\Pi_{P_i})$.  After possibly refining once more we can 
arrange $\Sigma$ that be smooth. It will then have the desired properties.
\end{proof}

\section{Tate extensions in the stable cohomology of $\Acal_g^\bb$}

\subsection*{The  stable cohomology of $\Acal_g^\bb$} We  here focus on what is perhaps the most `classical' example and  
also is a special case of \ref{example:symplectic},  namely the moduli stack  $\AA_g$ of principally polarized abelian varieties. 
We shall prove that the stable cohomology of its Baily-Borel compactification contains nontrivial Tate extensions and carries 
Goresky-Pardon Chern classes that have nonzero imaginary part (and hence are not defined over $\QQ$).

Let $H$ stand for  $\ZZ^2$ and endowed with standard symplectic form (characterized by  $\la e, e'\ra=1$ where $(e,e')$ is its standard basis) 
and  regard $H^g(=\ZZ^{2g})$  as a direct sum of symplectic lattices. In the notation of Example \ref{example:symplectic} we take for $V$ the 
vector space $\RR\otimes H^g(=\RR^{2g})$ with its obvious rational symplectic structure so that we have defined the symmetric domain 
$\HH_g:=\HH (\RR\otimes H^g)$ and 
we take for $\G$ the integral symplectic group $\Sp (H^g)(=\Sp (2g, \ZZ))$. Then $\AA_g$ can be identified with  $\Sp (H^g)\bs \HH_g$, 
when we think of the latter as a Deligne-Mumford stack. The Hodge bundle on $\HH_g$ descends to a rank $g$ vector bundle $\Fcal_g$ on the 
stack $\Sp (H^g)\bs \HH_g$. As such it has integral Chern classes. In what follows we will work mostly with cohomology with coefficients in 
$\QQ$-vector spaces. 
Then the distinction between the stack $\AA_g$ and underlying coarse moduli space (that we shall denote by $\Acal_g$) becomes moot, for the natural map 
from $\AA_g$ (which has the homotopy type of $B\Sp(2g, \ZZ)$)  to $\Acal_g$ induces an isomorphism on rational (co)homology. The Hodge bundle on $\HH_g$ descends to a bundle $\Fcal_g$ on 
$\AA_g$ and thus we find $\cha_k(\Fcal_g)\in H^{2k}(\AA_g; \QQ)\cong H^{2k}(\Acal_g; \QQ)$. We will therefore pretend that $\Fcal_g$ is a vector bundle on $\Acal_g$.
According to Charney and Lee \cite{charney-lee}, $H^k(\Acal_g^\bb; \QQ)$ is  independent of $k$ for $g$ sufficiently large. 
They prove that the direct sum of these stable cohomology spaces comes with the structure of a connected $\QQ$-Hopf algebra  $H^\pt$ whose primitive generators are classes $\widetilde{ch}_{2r+1}\in H^{4r+2}$  ($r\ge 0$) and classes $y_r\in H^{4r+2}$ ($r\ge 1$). For $g\gg r$, the image of 
$\widetilde{ch}_{2r+1}\in H^{4r+2}$ in $H^{4r+2}(\Acal_g; \QQ)$ is $\cha_{2r+1}(\Fcal_g)$ (which is known to be nonzero), whereas the image of $y_r$ in $H^{4r+2}(\Acal_g; \QQ)$ is zero.

The class $y_r$ is somewhat harder to describe: it comes from transgression of a primitive class in $H^{4r+1}(B\GL (\ZZ); \QQ)$ about which we will say more below.  Jiaming Chen and the  author \cite{chen-looijenga}  have recently shown that the stability theorem holds if we take the mixed Hodge structure  on $H^\pt(\Acal_g^\bb; \QQ)$ into account: $H^\pt$ inherits such a structure with $H^k$ having weight  $\le k$ (for $\Acal_g^\bb$ is compact) and 
$H^k/W_{k-1}H^k$ can be identified with  $H^k(\Acal_g; \QQ)$ for $g$ large. For $k=4r+2$  the image of $\widetilde{ch}_{2r+1}$ in  
$H^{4r+2}(\Acal_g; \QQ)$ 
 is $\cha_{2r+1}(\Fcal_g)$, which is of  bidegree $(2r+1,2r+1)$ (and nonzero for $g\gg r$), but  $y_r$ ($r\ge 1$) is of  bidegree $(0,0)$.
So the primitive part $H^{4r+2}_{\prim}$ of $H^{4r+2}$ is for $r\ge 1$ a Tate extension:
\begin{equation}\label{eq:4}
0\to \QQ(0)\to H^{4r+2}_{\prim} \to \QQ(-2r-1)\to 0,
\end{equation}
where $\QQ(-2r-1)$ is spanned by the image $ch_{2r+1}$ of  $\widetilde{ch}_{2r+1}$ and $\QQ(0)$ by the image of $y_r$. 
The inclusion $\QQ(-2r-1)\subseteq \CC$ comes about by regarding the  twisted (De Rham) version $(2\pi\sqrt{-1})^{2r+1}{ch}_{2r+1}$ as the natural generator (it lies in $\QQ(0)$).

In what follows we take $g$ large enough to be in the stable range, so that this sequence appears in $H^{4r+2}(\Acal_g^\bb, \QQ)$. By Theorem \ref{thm:main2} (in combination with  Remarks \ref{rem:chat} and \ref{rem:chhodge}), the Goresky-Pardon Chern character $\cha_{2r+1}^{\GP}(\Fcal_g)$ (being a universal polynomial with rational coefficients of weighted degree $2r+1$ in the $\chern_i^{\GP}(\Fcal_g)$) is then a generator of $F^{2r+1}H^{4r+2}_{\prim}$. This will help us determine the class of this extension. For this purpose  we also need to know a bit more about $y_r$, when viewed as an element of $H^{4r+2}(\Acal_g^\bb; \QQ)$. We will however not describe $y_r$, but rather a stable primitive \emph{homology} class $z_r\in H_{4r+2}(\Acal_ g^\bb; \QQ)$ such that $\la y_r,z_r\ra\not= 0$. That will do, for then the map  $x\in H^{4r+2}_{\prim}\mapsto \la x,z_r\ra/\la y_r,z_r\ra \in \QQ=\QQ(0)$ splits the  above sequence and so the extension class is given by the image of $\la \cha_{2r+1}^{\GP}(\Fcal_g), z_r\ra$ in $\CC/\QQ$. We prefer to replace 
$\cha_{2r+1}^{\GP}(\Fcal_g)$ by its De Rham variant $(2\pi\sqrt{-1})^{2r+1}\cha_{2r+1}^{\GP}(\Fcal_g)$, so that the class of  this Tate extension becomes more like a period; it is then the image of  $\la (2\pi\sqrt{-1})^{2r+1}\cha_{2r+1}^{\GP}(\Fcal_g), z_r\ra$ in $\CC/\QQ(2r+1)$.
The following theorem implies that this extension is nontrivial and  that the Goresky-Pardon Chern character has a nonzero imaginary part. 

\begin{theorem}\label{thm:main3}
The class of the Tate extension (\ref{eq:4}) in $\CC/\QQ(2r+1)$ (which is given by the image of   $\la (2\pi\sqrt{-1})^{2r+1}\cha_{2r+1}^{\GP}(\Fcal_g), z_r\ra$ in $\CC/\QQ(2r+1)$) is real  and equal to a nonzero rational multiple of $\pi^{-2r-1}\zeta (2r+1)$. In particular,  the imaginary part of $\cha_{2r+1}^{\GP}(\Fcal_g)$ is nonzero and its real part lies in  $H^{4r+2}(\Acal_g^\bb;\QQ)$.
\end{theorem}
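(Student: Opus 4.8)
The plan is to evaluate the pairing $\la (2\pi\sqrt{-1})^{2r+1}\cha_{2r+1}^{\GP}(\Fcal_g),z_r\ra$ by recognising it as a nonzero rational multiple of the Borel regulator of a generator $\kappa_r\in K_{4r+1}(\ZZ)\otimes\QQ$, and then to invoke Borel's evaluation of that regulator in terms of $\zeta(2r+1)$. Throughout, $g$ is taken large enough (in the stable range and with $g\gg r$) that the Charney--Lee description of $H^\pt$ applies and that $\cha_{2r+1}^{\GP}(\Fcal_g)$ generates $F^{2r+1}H^{4r+2}_{\prim}$ (Theorem \ref{thm:main2} together with Remarks \ref{rem:chat} and \ref{rem:chhodge}).

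First I would locate $z_r$ on the Baily--Borel boundary. Following Charney--Lee, $y_r$ is the transgression of the Borel class $b_r\in H^{4r+1}(B\GL(\ZZ);\QQ)$ dual to $\kappa_r$, and it is detected on the deepest stratum $\Acal_0\subset\Acal_g^\bb$: the local structure of $\Acal_g^\bb$ there is governed by the maximal $\QQ$-parabolic $P_I$ with $I$ Lagrangian, whose Levi has the factor $\GL(I)\cong\GL_g$ and whose unipotent radical is the vector group $U_I=\sym^2 I$ (Example \ref{example:symplectic}), so that the relevant arithmetic datum is $\GL_g(\ZZ)\ltimes(\G\cap U_I)$. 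Dually, $z_r$ is represented by a singular cycle supported in an arbitrarily small neighbourhood of $\Acal_0$; concretely $z_r=\nu_*(w_r)$, where $\nu$ is induced by the inclusion of that neighbourhood and $w_r$ is obtained from a generator of $H_{4r+1}(\GL_g(\ZZ);\QQ)\cong H_{4r+1}(\GL(\ZZ);\QQ)$ by the transgression along the $U_I$-direction. In particular $\la x,z_r\ra$ depends only on the germ of $x\in H^{4r+2}(\Acal_g^\bb;\CC)$ along $\Acal_0$, and $\la x,z_r\ra=\la\nu^*x,w_r\ra$.

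The crux is to compute $\nu^*x$ for $x=(2\pi\sqrt{-1})^{2r+1}\cha_{2r+1}^{\GP}(\Fcal_g)$ and to recognise it as a Beilinson-regulator form. By Corollary-definition \ref{cor:bbflatstructure} and Remark \ref{rem:chat}, $x$ is the degree-$(4r+2)$ component of the twisted Atiyah/Chern--Weil form of the isoholonomic flat $\rho$-connection on $\Fcal_g$, built from a \v Cech cocycle $U_{\alpha\beta}\mapsto\nabla^\beta-\nabla^\alpha$ of differences of flat local connections lifting the relative connections along the retractions $\rho_P$; equivalently, the pair consisting of $\cha_{2r+1}^{\GP}(\Fcal_g)$ and its restriction $\cha_{2r+1}(\Fcal_g)\in H^{4r+2}(\Acal_g;\QQ)$ to the interior is exactly the sort of secondary datum on which the theory underlying the Beilinson regulator operates. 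Near $\Acal_0$, Lemma \ref{lemma:bbflatness} identifies $\Fcal_g$ with the $\rho_I$-pullback of the automorphic bundle on the one-point stratum $\XX(P_I)$, on which $L_{P_I}/R_u(P_I)=\GL(I)$ acts through its standard $g$-dimensional representation (Example \ref{example:symplectic}); since the relative connection along the $U_I$-directions is trivial and the $\phi_\alpha$ are $\rho$-basic, the flat lifts $\nabla^\alpha$ differ on overlaps precisely by the cocycle of the tautological flat $\GL_g(\ZZ)$-bundle. Consequently $\nu^*x$ is a rational multiple of the universal secondary (Chern--Simons, equivalently Beilinson-regulator) form of that flat bundle on $B\GL_g(\ZZ)$ --- equivalently, by localisation in $K$-theory and functoriality of the regulator, the regulator of the image of $\kappa_r$. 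Keeping track of the transgression relating $w_r$ to a generator of $H_{4r+1}(\GL(\ZZ);\QQ)$ (this is where the odd power of $\pi$ gets pinned down) and of the normalisation $\la y_r,z_r\ra$, this yields $\la x,z_r\ra\equiv c\cdot\operatorname{reg}(\kappa_r)\pmod{\QQ(2r+1)}$ for some $c\in\QQ^\times$.

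Borel's theorem then identifies $\operatorname{reg}(\kappa_r)$ with a nonzero rational multiple of $\pi^{-2r-1}\zeta(2r+1)$ (equivalently, via the functional equation, of $\zeta'(-2r)$ up to a rational factor and a power of $\pi$); in particular it is a nonzero real number. Since $\QQ(2r+1)=(2\pi\sqrt{-1})^{2r+1}\QQ$ is a purely imaginary $\QQ$-line, its image in $\CC/\QQ(2r+1)$ is real and nonzero --- this is the first assertion, and it shows that (\ref{eq:4}) is a nontrivial extension. For the last assertion, write $\cha_{2r+1}^{\GP}(\Fcal_g)=\widetilde{ch}_{2r+1}+\mu\,y_r$ in $H^{4r+2}_{\prim}\otimes\CC$ with $\mu\in\CC$ (possible because the Goresky--Pardon lift restricts to $\cha_{2r+1}(\Fcal_g)$ on $\Acal_g$, hence has the same image as $\widetilde{ch}_{2r+1}$ in $\gr^W_{4r+2}=\QQ(-2r-1)$); the extension class above is, up to the $(2\pi\sqrt{-1})^{2r+1}$-normalisation and a rational factor, $\mu$ modulo $\QQ$, so its being real and nonzero forces $\re\mu\in\QQ$ and $\im\mu\neq 0$. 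Hence $\im\cha_{2r+1}^{\GP}(\Fcal_g)=(\im\mu)\,y_r\neq 0$ and $\re\cha_{2r+1}^{\GP}(\Fcal_g)=\widetilde{ch}_{2r+1}+(\re\mu)\,y_r\in H^{4r+2}(\Acal_g^\bb;\QQ)$ (the rationality of the real part is also a consequence of Corollary \ref{cor:real}, using that $\iota^*$ acts by $-1$ on $\gr^W_{4r+2}H^{4r+2}_{\prim}=\QQ\,\cha_{2r+1}(\Fcal_g)$ because $\iota^*\Fcal_g\cong\overline{\Fcal_g}$, so that $\iota^*\cha_{2r+1}(\Fcal_g)=(-1)^{2r+1}\cha_{2r+1}(\Fcal_g)$). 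I expect the main obstacle to be the identification in the third paragraph: proving that the particular partition-of-unity/flat-lift representative of the twisted Goresky--Pardon Chern character, restricted near $\Acal_0$ and pulled back along $\nu$, reproduces the Beilinson-regulator form of the tautological flat $\GL_g(\ZZ)$-bundle up to an explicit rational constant and the $2\pi\sqrt{-1}$-normalisation; this requires unwinding the construction of Section \ref{sect:2} in the presence of the $\sym^2 I$-directions, verifying that these contribute nothing to the pairing, and matching the transgression $b_r\mapsto y_r$ with the passage from primary to secondary characteristic classes.
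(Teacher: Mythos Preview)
Your overall strategy---reduce the pairing to the Beilinson/Borel regulator of a generator of $K_{4r+1}(\ZZ)\otimes\QQ$ and then quote its value---matches the paper's. The execution, however, differs in two places that matter, and one of them is a genuine gap.

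\textbf{The cycle and the bridge to the regulator.} The paper does not try to compute a pullback $\nu^*x$ as a secondary class on a neighbourhood of $\Acal_0$ (that neighbourhood is contractible, so a cohomology class there carries no information; the content has to live at the level of chains and forms). Instead it represents $z_r$ very explicitly as $[Z_r-cB_r]$, where $B_r$ is a $(4r+1)$-cycle representing $b_r$ supported on $\mathring U_\infty$, $Z_r$ is a $(4r+2)$-chain in $\Acal_g$ bounding $B_r$ (possible because $H_{4r+1}(\Acal_g;\QQ)=0$ in the stable range), and $cB_r$ is a cone on $B_r$ inside $U_\infty$. Choosing the connection $\nabla$ flat on $\mathring U_\infty$, the Chern form $\Ch_{2r+1}(\Fcal_g,\nabla)$ vanishes on $cB_r$, so the pairing is $\int_{Z_r}\Ch_{2r+1}(\Fcal_g,\nabla)$. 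This integral, taken modulo $\QQ(2r+1)$, depends only on $\partial Z_r=B_r$ and is by definition the Cheeger--Simons differential character $\widehat{\cha}_{2r+1}(\Fcal_g,\nabla)(b_r)$. The identification with $\cB_{2r+1}(\Fcal_g)(b_r)$ then comes from the Dupont--Hain--Zucker compatibility between the Cheeger--Simons and Beilinson refinements, which applies because $(\Fcal_g,\nabla)$ extends across a toroidal compactification. This Cheeger--Simons step is precisely the ``main obstacle'' you flagged; the paper's chain-level description makes it a one-line computation rather than a delicate matching of secondary forms.

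\textbf{Reality.} Here is the gap. Borel's theorem (as recorded in the paper's Scholium) only computes the image of $\cB_{2r+1}(\VV_g)(b_r)$ under the projection $\CC/\QQ(2r+1)\to\CC/\RR(2r+1)\cong\RR$; it does not by itself tell you the value in $\CC/\QQ(2r+1)$ is real. You cannot therefore conclude reality of the extension class from the regulator computation alone. The paper supplies the missing input via a real-structure argument: it shows (Lemma \ref{lemma:real}) that $B_r$ can be taken on the real locus $U_\infty\cap\Acal_g(\RR)$, and since the twisted Chern character form is fixed under full complex conjugation (Corollary \ref{cor:real}), the pairing lies in the image of $\RR$ in $\CC/\QQ(2r+1)$. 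Only then does the Scholium determine it completely. Your appeal to Corollary \ref{cor:real} at the end is aimed at the rationality of $\re\cha_{2r+1}^{\GP}$, not at this step; without placing $B_r$ on the real locus, invariance of the form under full conjugation does not immediately force the integral to be real.

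Once the first assertion is in hand, your derivation of the ``in particular'' clause from $\cha_{2r+1}^{\GP}=\widetilde{ch}_{2r+1}+\mu\, y_r$ is fine.
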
 

The computation  uses Beilinson's regulator for the field $\QQ$, which involves among other things Deligne cohomology and the Cheeger-Simons classes. We recall what we need below, referring to  Burgos' very accessible exposition \cite{burgos} as  a general reference for this topic.  
\subsection*{Refined Chern characters}
For a smooth complex variety $X$ there is defined the \emph{Deligne cohomology group} $H^{2p}_{\Dcal} (X, \ZZ(p))$ ($p=0,1,2, \dots$). It fits in an exact sequence
\[
0\to J_p(X)\to H^{2p}_{\Dcal} (X; \ZZ(p))\to F^pH^{2p}(X; \ZZ(p))\to 0,
\]
where  $F^pH^{2p}(X, \ZZ(p))$ denotes the intersection of the image of $H^{2p}(X;\ZZ(p))\to H^{2p}(X; \CC)$ with $ F^pH^{2p}(X; \CC)$ and $J_p(X)$ is an abelian group that is the $p$th  intermediate Jacobian in case $X$ is projective: 
\[
J_p(X):=H^{2p-1}(X; \CC)/\big(F^pH^{2p-1}(X; \CC)+H^{2p-1}(X; \ZZ(p))\big).
\]
We only need here the following somewhat informal description of this extension: when $X$ is complete, an element of $H^{2p}_{\Dcal} (X; \ZZ(p))$ is representable by a pair $(b, \alpha)$, where $b\in H^{2p-1}(X;\CC/\ZZ(p))$ and $\alpha$ is closed $2p$-form on $X$ of Hodge level $\ge p$ with periods in $\ZZ(p)$ (we then write $\alpha \in (F^pA)^{2p}_{\text{cl}}(X; \ZZ(p)$), such that for every smooth singular $\ZZ$-valued $2p$-chain $Z$ on $X$, the image of  $\int_Z\alpha$ in $\CC/\ZZ(p)$ is equal to $b([\partial Z])$. In case $X$ is not complete, we  require that $\alpha$  extends to a normal crossing compactification with logarithmic poles along $D$ of $X$ (so that it represents an element of $F^pH^{2p}(X)$ with periods in $\ZZ(p)$). The equivalence relation is the one which produces the exact sequence and so $(b, \alpha)$ represents zero precisely when the cohomology class of $\alpha$ is zero and  $b$ is in the image of $F^pH^{2p-1}(X; \CC)\to H^{2p-1}(X; \CC)/H^{2p-1}(X; \ZZ(p))=H^{2p-1}(X; \CC/\ZZ(p))$. 
Beilinson and Gillet showed that for a vector bundle $\Fcal$ on $X$ one has a natural lift of $(2\pi\sqrt{-1})^p \cha_{2p}(\Fcal)\in F^pH^{2p}(X; \ZZ(p))$ to $H^{2p}_{\Dcal} (X; \ZZ(p))$.  It is called the  \emph{Beilinson Chern character} and---in order to come to terms with the fact that  Beilinson  and Betti have a common initial string---we denote it  by  $\cB_{p}(\Fcal)\in H^{2p}_{\Dcal} (X; \ZZ(p))$.

It was observed by  Dupont, Hain and Zucker \cite{dhz} that we can also get this class as a Cheeger-Simons  differential character, which is defined in a $C^\infty$-setting.
For a manifold $M$ we have an extension that is similarly defined as $H^{2p}_{\Dcal} (X; \ZZ(p))$ above:
\[
0\to H^{2p-1}(M; \CC/\ZZ(p))\to\hat H^{2p}(M; \CC/\ZZ(p))\to A_{\text{cl}}^{2p}(M; \ZZ(p))\to 0,
\]
where $A_{\text{cl}}^{2p}(M; \ZZ(p))$ denotes the space of closed $2p$-forms on $M$ with periods in $\ZZ(p)$. A complex vector bundle $\Fcal$ on $M$ endowed  with a connection $\nabla$ defines \emph{Cheeger-Simons Chern character} $\widehat\cha_p (\Fcal, \nabla)\in  \hat H^{2p}(M;\CC/\ZZ(p))$, the closed $2p$-form $\Ch_p(\Fcal, \nabla)$  then being given as $\Tr ((-R(\nabla))^p)/p!$, where $R(\nabla)\in A_{\text{cl}}^{2}(\End (\Fcal))$ denotes the curvature form of $\nabla$.
Dupont-Hain-Zucker \cite{dhz} verified the compatibility with the Beilinson's Chern character: if $X$ is projective and $\Fcal$ is an algebraic vector bundle  endowed with a connection
$\nabla$ of type  $(1,0)$, then the Chern character form $\Ch_p(\Fcal, \nabla)$ lands in $(F^pA)^{2p}_{\text{cl}}(M; \ZZ(p))$. This ensures that
$\widehat\cha_p(\Fcal, \nabla)$  maps to the corresponding subspace $F^p\hat H^{2p}(M; \CC/\ZZ(p))$ of $\hat H^{2p}(M; \CC/\ZZ(p))$ and the evident 
projection $F^p\hat H^{2p}(M; \CC/\ZZ)\to H^{2p}_{\Dcal} (X; \ZZ(p))$ maps $\widehat\cha_p(\Fcal; \nabla)$ to $\cB_p(\Fcal)$. This is then also true when $X$ is quasi-projective, provided we know that $(\Fcal, \nabla)$ extends across a   
smooth normal crossing compactification, for both  refinements of the  Chern character behave functorially with respect to pull-backs. 

\subsection*{The regulator map for $\QQ$}
The  group homology of $\GL(g,\ZZ)$ stabilizes in $g$ and the resulting stable rational homology is a graded commutative $\QQ$-Hopf algebra with a primitive generator for each degree $4r+1$ (so it is an exterior algebra). This stable homology is in fact the rational homology of  $B\GL(\ZZ)$, where $\GL(\ZZ)$ is the monotone union   
$\cdots\subseteq \GL (g,\ZZ)\subseteq \GL (g+1,\ZZ)\subseteq \cdots$.  Applying  Quillen's plus construction does not affect the homology and hence this remains so for the homology of $B\GL(\ZZ)^+$.
The latter is an $H$-space with distinguished generators up to sign for  its primitive rational homology: following  Quillen, the algebraic $K$-groups of $\ZZ$  are defined as $K_s(\ZZ):=\pi_s (B\GL(\ZZ)^+,*)$  and the Hurewicz map 
\[
K_\pt (\ZZ)=\pi_\pt(B\GL(\ZZ)^+,*)\to H_\pt(B\GL(\ZZ)^+)\cong H_\pt(B\GL(\ZZ))
\] 
induces for $s>0$ an isomorphism of $K_s (\ZZ)\otimes\QQ$ onto $H^\prim_s(B\GL(\ZZ);\QQ)$. It is known for that $s>0$, $K_s (\ZZ)$ is a torsion group unless $s=4r+1$ ($r=0,1,\dots$)  in which case it has rank one. We choose for  $r>0$ a generator $b_r$ of the image of  
$K_{4r+1}(\ZZ)\to K_{4r+1}(\ZZ)\otimes \QQ$ and identify it with its image in $H^\prim_{4r+1}(B\GL(\ZZ); \QQ)$. 
This element is of course defined up to sign. 
Over $B\GL(g, \ZZ)$ we have the universal local system $\VV_g$ with fiber $\ZZ^g$. The inclusion $\GL (g, \ZZ)\subseteq \GL (g, \CC)$ induces a map
$B\GL (g, \ZZ)\to B\GL (g, \CC)$.  If we take direct limits, then the
resulting map $B\GL(\ZZ)\to B\GL(\CC)$ is zero on rational homology in positive degree (being a homomorphism from an exterior algebra to a polynomial one), but the situation is different for 
Deligne cohomology. This of course requires that we are in an algebraic setting, which is kind of clear for $B\GL(\CC)$, being an inductive limit of
Grassmannians, but less so for $B\GL(\ZZ)$. Yet, as explained in \cite{dhz} and \cite{burgos}, this can be given a sense by regarding $B\GL(\ZZ)$ as a simplicial projective manifold of dimension zero (and in order to get the map, we must then do the same for $B\GL(\CC)$).\footnote{This can probably also be used to produce another proof that $y_r$ is of type $(0,0)$.} 

We are interested in the value $\cB_{2r+1} (\VV_g) (b_r)\in \CC/\QQ(2r+1)$, or rather its image in $\CC/\RR(2r+1)$. Since $\RR(2r+1)$ is just the imaginary axis, we may  identify  $\CC/\RR(2r+1)$ with $\RR$ so that we have  a natural map $\CC/\QQ(2r+1)\to \CC/\RR(2r+1)\cong\RR $. The
image of  $\cB_{2r+1}(\VV_g) (b_r) \in \CC/\QQ(2r+1)$ in $\RR $ is according to Beilinson \cite{burgos} given  by a rational 
multiple of  the corresponding regulator of $\QQ$, which is $\zeta'(-2r)$, 
where $\zeta$ is the classical Riemann zeta function. (It is in fact known that  $\cB_{2r+1}(\VV_g) (b_r)$ itself  is represented
by $\zeta'(-2r)$, but we will obtain this as an outcome of our computation.) If we then invoke 
the functional equation for $\zeta$, we find:

\begin{scholium}\label{scholium}
The image of $\cB_{2r+1}(\VV_g)(b)\in \CC/\QQ(2r+1)$  under the natural map  $\CC/\QQ(2r+1)\to \RR $  is a nonzero rational multiple of   
$\pi^{-2r-1}\zeta(2r+1)$.
\end{scholium}

\subsection*{Proof of Theorem \ref{thm:main3}}
Returning to the situation at hand, let us denote by $I$ resp.\ $I'$  the integral span of the first basis resp.\ second  basis element of $H=\ZZ^2$, so that 
we have a decomposition $H^g=I^g\oplus I'{}^g$ into maximal isotropic sublattices of $H^g$. The symplectic form identifies $I'$ with $\Hom (I^g, \ZZ)$ and so we have an embedding $\GL(g, \ZZ)=\GL (I^g)\hookrightarrow  \Sp (H^g)$ defined by $\sigma\mapsto (\sigma, (\sigma^*)^{-1})$. This map commutes with the stability maps on either side so that the map on rational homology also stabilizes, but this will yield the zero map as $H_\pt(B\GL (\ZZ); \QQ)$ is an exterior algebra and $H_\pt(B\Sp (\ZZ); \QQ)$ a  polynomial algebra. However, as explained in \cite{chen-looijenga}, if $\infty\in \Acal_g^\bb$ is the worst cusp (the unique element of the zero-dimensional Satake stratum $\Acal_0$ of $\Acal_g^\bb$), then we have a basis of regular neighborhoods $U_\infty$ of $\infty$ in $\Acal_g^\bb$ with the property that  $\mathring{U}_\infty:= U_\infty\cap \Acal_g$ is a virtual classifying space for
the semi-direct product $\GL(g,\ZZ)\ltimes \sym^2 (\ZZ^g)$ and so  contains a virtual classifying space for  $\GL(g,\ZZ)$. We will make use of the fact that this virtual classifying space can be chosen in the real locus. Here  we note that the modular interpretation of $(\Acal_g, \Fcal_g)$ endows this pair with a real structure.
The Baily-Borel compactification $\Acal_g^\bb$ together with its stratification are defined over $\RR$. In particular, $\infty$ is a real point so that we can take $U_\infty$ invariant under complex conjugation.

\begin{lemma}\label{lemma:real}
The locus $U_\infty\cap \Acal_g(\RR)$ is a virtual classifying space for $\GL(g, \ZZ)$ and so we can represent $b_r$ by a cycle  $B_r$  on 
$U_\infty\cap \Acal_g(\RR)$. 
\end{lemma}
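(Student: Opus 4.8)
The plan is to make complex conjugation near the worst cusp $\infty$ completely explicit in the coordinates of Example~\ref{example:symplectic} and then to recognise its fixed locus as the standard model for a virtual classifying space of $\GL(g,\ZZ)$. Recall that $\infty$ is the cusp attached to a maximal isotropic sublattice $J\subseteq H^g$ (so that $\XX(P_J)$ is a point), and that in the corresponding realisation $\HH_g=\{Z=X+\sqrt{-1}\,Y\}$ of Siegel's upper half space (symmetric complex $g\times g$ matrices with $Y$ positive definite) the cusp stabiliser $\G_P=\Sp(H^g)\cap P_J$ is the semidirect product $\GL(g,\ZZ)\ltimes\sym^2(\ZZ^g)$, where $\sigma\in\GL(g,\ZZ)$ acts by $Z\mapsto \sigma Z\sigma^{\mathsf T}$ and $S\in\sym^2(\ZZ^g)$ by $Z\mapsto Z+S$. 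By the cited results of Charney--Lee and Chen--Looijenga we may, after shrinking, assume $\mathring U_\infty=\G_P\backslash\Omega$ for a $\G_P$-invariant contractible open $\Omega\subseteq\HH_g$ of product shape $\Omega=\{X+\sqrt{-1}\,Y: Y\in W\}$, with $W\subseteq\sym^2_{>0}(\RR^g)$ a $\GL(g,\ZZ)$-invariant, $\GL(g,\ZZ)$-equivariantly contractible ``deep'' open subcone; concretely one may take $W=\{Y:\det Y>t\}$ for $t$ large, which via $Y\mapsto\big((\det Y)^{1/g},(\det Y)^{-1/g}Y\big)$ is diffeomorphic to $(t^{1/g},\infty)\times\{Y'\succ0:\det Y'=1\}$ and hence contractible. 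This is precisely the normal form realising $\mathring U_\infty$ as a virtual classifying space for $\G_P$.

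Next I would pin down the real structure. By the remarks preceding the statement, the modular interpretation gives $(\Acal_g,\Fcal_g)$ and the stratified space $\Acal_g^\bb$ structures over $\RR$, and $\infty\in\Acal_0(\RR)$, so we may take $U_\infty$ invariant under the associated antiholomorphic involution $\iota$. In the coordinates above $\iota$ is induced by the antiholomorphic involution $\tilde\iota\colon Z\mapsto-\overline Z$ of $\HH_g$ (the one fixing the purely imaginary locus; it corresponds to the multiplicative-type real degeneration of abelian varieties, $\overline{A_Z}\cong A_{-\overline Z}$). One checks directly that $\tilde\iota$ normalises $\G_P$: conjugation by $\tilde\iota$ fixes each $\sigma\in\GL(g,\ZZ)$ (since $\sigma$ is real) and sends $S\in\sym^2(\ZZ^g)$ to $-S$. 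Hence $\tilde\iota$ induces the automorphism $\theta$ of $\G_P$ that is the identity on the $\GL(g,\ZZ)$-factor and $-1$ on $\sym^2(\ZZ^g)$, and $\G_P^\theta=\GL(g,\ZZ)$ because over $\ZZ$ the equation $S=-S$ forces $S=0$. Consequently $\iota$ acts on $\mathring U_\infty=\G_P\backslash\Omega$ through $\tilde\iota$, and $U_\infty\cap\Acal_g(\RR)$ is the fixed-point locus of $\iota$ there.

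Now the fixed locus of $\tilde\iota$ on $\Omega$ is $\Omega\cap\HH_g^{\tilde\iota}=\{\sqrt{-1}\,Y:Y\in W\}\cong W$, i.e.\ the deep region $W$ inside the symmetric space $\GL(g,\RR)/\Orth(g)=\sym^2_{>0}(\RR^g)$ of $\GL(g,\RR)$; by construction it is $\GL(g,\ZZ)$-invariant and $\GL(g,\ZZ)$-equivariantly contractible. Moreover two purely imaginary points $\sqrt{-1}\,Y_1,\sqrt{-1}\,Y_2$ of $\Omega$ become equal in $\mathring U_\infty$ only if $(\sigma,S)\cdot\sqrt{-1}\,Y_1=\sqrt{-1}\,Y_2$ for some $(\sigma,S)\in\G_P$, which forces $S=0$ and $Y_2=\sigma Y_1\sigma^{\mathsf T}$; hence the image of $\Omega\cap\HH_g^{\tilde\iota}$ in $\mathring U_\infty$ is the closed subset $\GL(g,\ZZ)\backslash W$, manifestly contained in $U_\infty\cap\Acal_g(\RR)$ and a virtual classifying space for $\GL(g,\ZZ)$. (The remaining components of the fixed locus correspond to the other real forms of maximally degenerate principally polarised abelian varieties and are irrelevant here.) Finally, since the homology of $\GL(g,\ZZ)$ stabilises and $b_r\in H_{4r+1}(B\GL(\ZZ);\QQ)=H^{\prim}_{4r+1}(B\GL(\ZZ);\QQ)$ lies in the stable range for our $g$, it is the image of a class in $H_{4r+1}(B\GL(g,\ZZ);\QQ)=H_{4r+1}(\GL(g,\ZZ)\backslash W;\QQ)$, which is represented by a rational singular cycle $B_r$ supported on $\GL(g,\ZZ)\backslash W\subseteq U_\infty\cap\Acal_g(\RR)$, as required.

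I expect the main obstacle to be making the first paragraph fully rigorous: exhibiting, for a cofinal system of neighbourhoods $U_\infty$, the identification $\mathring U_\infty=\G_P\backslash\Omega$ with $\Omega$ of the stated product shape and $W$ equivariantly contractible---that is, the Siegel-set and Minkowski-reduction bookkeeping at the zero-dimensional cusp, together with the contractibility of the ``$\det Y$ large'' region. Once $\mathring U_\infty$ is in this normal form the remainder is essentially formal: the identification of $\tilde\iota$, the computation $\G_P^\theta=\GL(g,\ZZ)$, and the stability argument for $b_r$. It is worth recording, as a secondary point, that we only need one virtual classifying space for $\GL(g,\ZZ)$ sitting inside $U_\infty\cap\Acal_g(\RR)$ and carrying $b_r$, not a precise homotopy description of the entire real locus (which decomposes according to the various real forms and polarisation data).
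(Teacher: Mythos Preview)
Your proof is correct and follows essentially the same route as the paper's: both use the Siegel upper half space model, identify the relevant real locus near $\infty$ with the purely imaginary matrices $\sqrt{-1}Y$ (fixed by $Z\mapsto -\overline{Z}$), observe that the $\Sp(H^g)$-stabiliser of this locus is $\GL(I^g)\cong\GL(g,\ZZ)$, and conclude that a $\GL(g,\ZZ)$-invariant contractible piece of the positive cone maps to a virtual $B\GL(g,\ZZ)$ inside $U_\infty\cap\Acal_g(\RR)$. The only difference is in how the ``deep'' region is described: the paper invokes the notion of a $\GL(I^g)$-invariant \emph{cocore} $K\subseteq C_{I^g}$ from \cite{amrt} (which packages the reduction-theory input you flag as the main obstacle), whereas you write down the explicit region $W=\{Y\succ 0:\det Y>t\}$ and verify its properties by hand. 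Your choice is a perfectly good cocore, and your extra verifications (that $\tilde\iota$ normalises $\G_P$ with $\G_P^\theta=\GL(g,\ZZ)$, and that purely imaginary points are identified only by the $\GL(g,\ZZ)$-factor) make explicit what the paper leaves implicit.
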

\begin{proof}
The real structure on $\Acal_g$ lifts to one on $\HH_g$, which  in relation to the cusp $\infty$ is best understood in terms of the Siegel upper half plane model. The symplectic form identifies the space of complex symmetric tensors $\sym^2(I^g_\CC)$ with the space of symmetric maps $I^g_\CC\to I'{}^g_\CC$. The graph of such a map lies in $\HH_g$ if and only if the imaginary part of the symmetric tensor is positive.  If $C_{I^g}$ denotes the  locus $C_{I^g}$  of  positive 
symmetric tensors, then $\sqrt{-1}C_{I^g}$ defines a real subset of $\HH_g$. 
The $\Sp (H^g)$-stabilizer of $\sqrt{-1}C_{I^g}$ is $\GL(I^g)$ and the orbit space $\GL(I^g)\bs \sqrt{-1}C_{I^g}$ maps onto a connected component of the real locus of $\Acal_g$.
Now $\GL(I^g)\bs C_{I^g}$ is a virtual classifying space for $\GL(I^g)=\GL(g, \ZZ)$. This is still so if  we replace $C_{I^g}$ by any $\GL(I^g)$-invariant \emph{cocore} $K\subseteq C_{I^g}$ \cite{amrt}. In particular,  $\GL(I^g)\bs(\sqrt{-1}K)$ supports a $(4r+1)$-cycle $B_r(K)$ which represents the primitive element $b_r$ defined above. For an appropriate choice of $K$,   $\GL(I^g)\bs(\sqrt{-1}K)$ embeds in $\mathring{U}_\infty$ and we then take $B_r$ to be the image of 
$B_r(K)$. 
\end{proof}

Since $H_{4r+1}(\Acal_g; \QQ)=0$ (we are in the stable range), the cycle $B_r$ bounds a $\QQ$-chain $Z_r$ in $\Acal_g$. As $U_\infty$ is contractible 
(even conical we make a careful choice for $U_\infty$), this cycle also bounds a chain $cB_r$ in $U_\infty\cap \Acal^\bb_g(\RR)$ so that we obtain a $(4r+2)$-cycle  $Z_r-cB_r$ on $\Acal_g^\bb$. It is shown  in  \cite{chen-looijenga} that the stable cohomology class $y_r\in H^{4r+2}(\Acal_g^\bb; \QQ)$ takes a 
nonzero value on this class so that $[Z_r-cB_r]$ may serve as our $z_r\in H_{4r+2}(\Acal_g^\bb; \QQ)$. It remains to compute the value of 
$(2\pi\sqrt{-1})^{2r+1}\cha_{2r+1}^\GP(\Fcal_g)$  on $[Z_r-cB_r]$.

Corollary  \ref{cor:bbflatstructure}  gives us a connection $\nabla$ on $\Fcal_g$ whose curvature form yields  the twisted Goresky-Pardon Chern characters. According  to Corollary \ref{cor:real} these are invariant under  full complex conjugation.  We assume that  $U_\infty$ has been chosen so small that $\nabla$ is flat on  $\mathring{U}_\infty$ and defines on $U_\infty\cap \Acal_g(\RR)$ a local system given by the obvious  representation of degree $g$ of $\GL(I^g)$. Then the form $\Ch_{2r+1}(\Fcal_g, \nabla)$ vanishes on $cB_r$ and so we find that
\[
\la (2\pi\sqrt{-1})^{2r+1}\cha_{2r+1}^\GP(\Fcal_g), z_r\ra =\int_{Z_r} \Ch_{2r+1}(\Fcal_g, \nabla)=\widehat\cha_{2r+1}(\Fcal_g, \nabla)(Z_r).
\]
As $\Ch_{2r+1}(\Fcal_g, \nabla)$ defines a class in  $H^{4r+2}(\Acal_g;\QQ(2r+1))$, the image of this integral in $\CC/ \QQ(2r+1)$ only depends on $\partial Z_r=B_r$ and is then given by  the value $\widehat\cha_{2r+1}(\Fcal_g)(b_r)\in \CC/ \QQ(2r+1)$. Since $\Ch_{2r+1}(\Fcal_g, \nabla)$ and $b_r$ are invariant under full complex conjugation, this value lies in fact in the image of $\RR$ in  $\CC/ \QQ(2r+1)$. In other words, it is completely given by its image 
in  $\CC/ \RR(2r+1)\cong\RR$. We have observed that $(\Fcal, \nabla)$ extends as a holomorphic vector bundle with flat connection to a nonsingular toric compactification and so this is also equal to $\cB_{2r+1}(\Fcal_g)(b_r)\in \CC/ \QQ(2r+1)$. According to our Scholium \ref{scholium} its image in $\CC/ \RR(2r+1)\cong\RR$ is a rational multiple of $\pi^{-2r-1}\zeta (2r+1)$. This completes the proof.
\\

\begin{cremarks}
Let us adhere to the custom to denote  $i$th Chern class of  the Hodge bundle on $\Acal_g$ by $\lambda_i\in H^{2i}(\Acal_g; \QQ)$. The Goresky-Pardon lift of $\lambda_i$ to $\Acal_g^\bb$ is in fact a De Rham lift $\lambda^\GP_i\in F^iH^{2i}(\Acal^\bb_g; \CC)$, which, as we have seen, sometimes not even lies in 
$H^{2i}(\Acal_g; \RR)$. However, for any toric resolution $\pi: \Acal_g^\Sigma\to \Acal^\bb_g$, the Hodge bundle on $\Acal_g$ extends canonically to $\Acal_g^\Sigma$ so that we do have a canonical lift $\lambda^\Sigma_i\in H^{2i}(\Acal^\Sigma_g; \QQ)$.  According to  Proposition \ref{prop:main1}, 
the image of $\lambda^\Sigma_i$ in  $H^{2i}(\Acal^\Sigma_g; \CC)$ equals $\pi^*\lambda^\GP_i$ and so applying $\pi^*$ drastically simplifies things (in particular, $\pi^*\lambda^\GP_i$ lies in  $H^{2i}(\Acal^\bb_g; \QQ)$).  If we are in the stable range ($2i<g$), then according to Charney-Lee,  $\lambda_i$ extends to a class $\tilde\lambda_i\in H^{2i}(\Acal^\bb_g; \QQ)$, but this lift is not unique. Yet its image under $\pi^*$ is still $\lambda^\Sigma_i\in H^{2i}(\Acal^\bb_g; \QQ)$. So the difference $\lambda^\GP_i- \tilde\lambda_i$ determines 
the nature of a Tate extension, and this extension  becomes trivial when pulled back to $H^{2i}(\Acal^\Sigma_g; \QQ)$.

Richard Hain computed  in \cite{hain} the rational cohomology for $\Acal_g$ and $\Acal_g^\bb$ (with their mixed Hodge structure) for $g=2, 3$.  He found that for $g=2$  all the rational cohomology is generated by  $\lambda_1$ (and so is not so interesting), but that $H^6(\Acal_3; \QQ)$ and $H^6(\Acal^\bb_3; \QQ)$ contain possibly nontrivial Tate extensions. For example, $H^6(\Acal_3; \QQ)$ is an extension of $\QQ(-6)$ by $\QQ(-3)$ and hence $H^6_c(\Acal_3; \QQ)\cong H^6(\Acal^\bb_3, \Acal^\bb_2; \QQ)$ (which embeds in $H^6(\Acal^\bb_3; \QQ)$ as a subspace of codimension one)
is an extension of $\QQ(-3)$ by $\QQ(0)$.  So this very much looks like the stable cohomology of $H^6(\Acal^\bb_g)$, although we are here of course outside the stable range (which requires $g>6$).
Hain  raises the question whether this extension is nontrivial and more specifically, whether it is of the type that we have been discussing here. Our results have nothing to say about this (although the techniques used here could  be helpful), but  at least they do suggest to investigate whether the following holds:  \emph{Is for $g>6$ the restriction map $H^6(\Acal^\bb_g)\to H^6(\Acal^\bb_3)$ an injection?} 
\end{cremarks}

\end{document}